\documentclass[11pt]{article}
\setlength{\topmargin}{-.5in}
\setlength{\textheight}{9in}
\setlength{\oddsidemargin}{.125in}
\usepackage[utf8]{inputenc}
\setlength{\textwidth}{6.25in}

\usepackage{amsmath,amsthm}
\usepackage {latexsym}
\usepackage{amssymb}

\usepackage[all]{xy}

\usepackage{xcolor}
 \usepackage{ulem}
\usepackage[toc,page]{appendix}

\newcommand{\beal}{\begin{align}}
\newcommand{\enal}{\end{align}}
\newcommand{\bealn}{\begin{align*}}
\newcommand{\enaln}{\end{align*}}
\newcommand{\bear}{\begin{eqnarray}}
\newcommand{\eear}{\end{eqnarray}}
\newcommand{\beeq}{\begin{equation}}
\newcommand{\eneq}{\end{equation}}

\newcommand{\eps}{{\varepsilon}}
\newcommand{\R}{{\mathbb R}}

\newcommand{\Z}{{\mathbb Z}}

\newcommand{\calC}{{\mathcal C}}

\newcommand{\cL}{{\mathcal{L}_1}}

\newcommand{\calO}{{\mathcal{O}}}

\renewcommand{\ln}{\log}

\def\bm{\left[ \begin{array}{cc}}
\def\endm{\end{array}\right]}

\def\calC{{\mathcal C}}

\def\eps{\varepsilon}

\def\bm{\left[\begin{matrix} }
\def\endm{\end{matrix}\right]}

\def\R{{\mathbb R}}

\newtheorem{theorem}{Theorem}
\newtheorem{lemma}[theorem]{Lemma}

\newtheorem{cor}[theorem]{Corollary}
\newtheorem{prop}[theorem]{Proposition}

\theoremstyle{remark}
\newtheorem{remark}[theorem]{Remark}

\def\cL{\mathcal L}

\renewcommand{\hat}{\widehat}
\renewcommand{\epsilon}{\eps}
\renewcommand{\tilde}{\widetilde}
\numberwithin{equation}{section}
\numberwithin{theorem}{section}

\begin{document}

\title{Cost for a  controlled linear KdV equation}
\author{Joachim Krieger\thanks{Bâtiment des Mathématiques, EPFL, Station 8, CH-1015 Lausanne, Switzerland.  E-mail: \texttt{joachim.krieger@epfl.ch}.},\;\; Shengquan Xiang\thanks{Bâtiment des Mathématiques, EPFL, Station 8, CH-1015 Lausanne, Switzerland.  E-mail: \texttt{shengquan.xiang@epfl.ch.}}}
\maketitle
\begin{abstract}
The controllability of the linearized KdV equation with right Neumann control  is  studied in the pioneering work of Rosier \cite{rosier97}. However,  the proof is by contradiction arguments and the value of the observability constant remains unknown, though rich mathematical theories are built on this totally unknown constant.   We introduce a constructive  method that gives the quantitative value of this constant.
\end{abstract}
\smallskip
\noindent \textbf{Keywords.} Korteweg-de Vries, controllability, cost, observability.

\noindent \textbf{AMS Subject Classification.}
35Q53,   	
34H05.  	
\section{Introduction}

The goal of this paper is to  give a quantitative cost estimate  of the controlled system
\[
u_t+ u_x+ u_{xxx}=0,\, u(t,0) = u(t,L) =0,  u_x(t,L) = a(t).
\]
\begin{theorem}\label{Thm-main}
Let $L>0$. There exist effectively computable $T_0 = T_0(L)>0$ and  $c=c(L)>0$ such that  for any $T\geq T_0$ the solution $u$ of 
\[
u_t = -u_x - u_{xxx},\, u(t,0) = u(t,L) = u_x(t,L) = 0,\,u(0,x) = u_0(x),
\]
satisfies 
\begin{gather}\label{eq:c}
\int_0^T|u_x(t,0)|^2\,\,dt\geq c\|u_0\|_{L^2(0,L)}^2, \forall u_0\in L^2,  \textrm{ if }L\notin \mathcal{N}; \\
\int_0^T|u_x(t,0)|^2\,\,dt\geq c\|u_0\|_{L^2(0,L)}^2, \forall u_0\in H\subset L^2,  \textrm{ if }L\in \mathcal{N}.\label{eq:c:H}
\end{gather}
\end{theorem}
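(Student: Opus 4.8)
The starting point is the energy identity for the free evolution. Multiplying $u_t=-u_x-u_{xxx}$ by $u$ and integrating by parts, using $u(t,0)=u(t,L)=u_x(t,L)=0$, one gets $\frac{d}{dt}\|u(t)\|_{L^2(0,L)}^2=-|u_x(t,0)|^2$, hence
\[
\int_0^T|u_x(t,0)|^2\,dt=\|u_0\|_{L^2}^2-\|u(T)\|_{L^2}^2 .
\]
Thus \eqref{eq:c} is \emph{equivalent} to the strict-contraction bound $\|u(T_0)\|_{L^2}^2\le(1-c)\|u_0\|_{L^2}^2$. Since $A:=-\pr_x-\pr_x^3$ with the stated boundary conditions generates a contraction semigroup $(e^{tA})_{t\ge0}$, it suffices to establish \emph{effective exponential stability}: $\|e^{tA}\|_{\mathcal L(X)}\le C_0e^{-\omega t}$ with computable $C_0,\omega>0$, where $X=L^2(0,L)$ if $L\notin\mathcal{N}$, and if $L\in\mathcal{N}$ we take $X=H$ to be the ($A$-invariant) range of the spectral projection of $A$ onto $\spec A\cap\{\Re\lambda<0\}$, whose finite-dimensional complement is exactly Rosier's space of eigenfunctions with vanishing Neumann trace at $0$. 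Once $C_0,\omega$ are in hand, choose $T_0$ with $C_0^2e^{-2\omega T_0}\le\tfrac12$ and set $c:=1-C_0^2e^{-2\omega T_0}\ge\tfrac12$; for $T\ge T_0$ the integrand is nonnegative, so the same $c$ persists. This settles both \eqref{eq:c} and \eqref{eq:c:H}.

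For the decay estimate I would invoke a \emph{quantitative Gearhart--Prüss theorem}: for a contraction semigroup on a Hilbert space, $\|e^{tA}\|_{\mathcal L(X)}$ is bounded above by an explicit function of $t$ and of $M:=\sup_{\tau\in\R}\|(i\tau-A)^{-1}\|_{\mathcal L(X)}$ alone, with a positive decay rate depending only on $M$. The whole problem therefore reduces to producing a finite, \emph{effectively computable} bound $M=M(L)<\infty$ on the resolvent along the imaginary axis.

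The core is this resolvent bound, obtained by solving the boundary value problem $i\tau u+u_x+u_{xxx}=f$, $u(0)=u(L)=u_x(L)=0$, explicitly. Writing $z_1(\tau),z_2(\tau),z_3(\tau)$ for the roots of $z^3+z+i\tau=0$ (so $z_1+z_2+z_3=0$), the solution is $u(x)=\int_0^L G(x,y;\tau)f(y)\,dy$ with $G$ built from $e^{z_j x}$ through a $3\times3$ boundary matrix; its determinant $\Delta(\tau)$ is an explicit function of the $z_j(\tau)$, and $\Delta(\tau)=0$ exactly when $i\tau\in\spec A$. Rosier's analysis of the cubic and the boundary conditions, read quantitatively, gives that $\Delta(\tau)\ne0$ for all $\tau\in\R$ precisely when $L\notin\mathcal{N}$; when $L\in\mathcal{N}$, $\Delta$ vanishes at only finitely many $\tau$, and these are peeled off into $M(L)$, after which the resolvent of $A|_H$ differs from that of $A$ by an explicitly controlled finite-rank term and one runs the same analysis on $H$. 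It then remains to bound $\|G(\cdot,\cdot;\tau)\|_{L^2\times L^2}$ uniformly in $\tau$: for $|\tau|\ge R$ (with $R$ explicit) one uses the asymptotics $\Re z_1\to+\infty$, $\Re z_3\to-\infty$ at rate $\tau^{1/3}$, and $z_2=i\tau^{1/3}+O(\tau^{-1/3})$ — the two fast modes localize $G$ near the endpoints and the dispersive $\tau^{1/3}$ growth makes the bound uniform, crucially \emph{without} the spectral-gap restriction $L<\sqrt3$ that a crude multiplier (or weighted-Lyapunov) argument would force; for $|\tau|\le R$ the map $\tau\mapsto\|G(\cdot,\cdot;\tau)\|$ is continuous on a compact set and is bounded explicitly, the only delicate input being a quantitative lower bound $|\Delta(\tau)|\ge\delta(L)>0$ there, available because $\Delta$ is an explicit function of $\tau$ and $L$ that degenerates only as $L$ approaches $\mathcal{N}$.

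The main obstacle is exactly this last step: converting the qualitative facts ``$\Delta(\tau)\ne0$'' and ``$\|G(\cdot,\cdot;\tau)\|<\infty$'' into a single \emph{effective} constant $M(L)$ valid for all $\tau\in\R$. This demands (i) uniform quantitative control of the three characteristic roots $z_j(\tau)$ and of the boundary determinant over the entire range of $\tau$, including the transition regime $|\tau|\sim R$ where neither the large-$\tau$ asymptotics nor brute-force compactness is comfortable, and (ii) an explicit lower bound on $|\Delta(\tau)|$ degrading in a controlled way as $L\to\mathcal{N}$. Once these quantitative pieces are in place, the reductions above convert them mechanically into the announced $T_0(L)$ and $c(L)$.
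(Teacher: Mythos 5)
Your strategy is conceptually sound but takes a genuinely different route from the paper, and it stops exactly where the hard quantitative work begins. You are right that the energy identity $\int_0^T |u_x(t,0)|^2\,dt = \|u_0\|^2 - \|u(T)\|^2$ (which is the paper's \eqref{fluxine}) turns observability for $T\ge T_0$ into a strict-contraction bound for $S(T_0)$, and you propose to obtain it from effective exponential stability via a quantitative Gearhart--Pr\"uss theorem and an explicit resolvent estimate built from a $3\times 3$ boundary Green's function. The paper takes a different path: it never touches the resolvent of $A$ globally. Instead it proves Proposition~\ref{prop:1}, a quantitative version of Rosier's unique-continuation step showing that no near-eigenfunction (small $\inf_\lambda\|\lambda u - Au\|$, small $|u_x(0)|$, $H^3$-bounded, $L^2$-normalized) can exist; and Proposition~\ref{prop:2}, which, assuming observability fails, constructively builds such a near-eigenfunction by repeatedly applying the flow (to regain smoothness), orthogonalizing, and using a covering-number bound for $H^3$-bounded sets in $L^2$ to cap the number of iterations. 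The $H^3$ bound in Proposition~\ref{prop:1} automatically bounds the candidate near-eigenvalue $\lambda$, so the paper only ever analyzes the transcendental boundary relation in a \emph{compact} region of the spectral parameter; your approach needs a uniform bound on the whole imaginary axis, including the high-frequency regime, which the paper deliberately avoids.

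The concrete gap in your proposal is the step you flag yourself: the bound $M(L)=\sup_{\tau\in\R}\|(i\tau-A)^{-1}\|$ together with the lower bound $|\Delta(\tau)|\ge\delta(L)$ is asserted to be ``available because $\Delta$ is an explicit function of $\tau$ and $L$'' and, for $|\tau|\le R$, by ``continuity on a compact set.'' That compactness argument is precisely the non-effective ingredient the theorem is supposed to remove, so it cannot be left there: the whole content of the claim ``effectively computable $c(L)$'' lies in quantifying it. (The paper's substitute for this is the explicit perturbative estimate on the zeros of $\alpha-\beta e^{-iL\xi}$ and of $p-\xi+\xi^3$ in a disk $D_R$ with $R$ computable from $K_1$, culminating in the inequality $56L^2(R+1)r<\min_{k,l}\big|L^2-(2\pi\sqrt{(k^2+kl+l^2)/3})^2\big|$.) A second, smaller point: you invoke a quantitative Gearhart--Pr\"uss theorem converting $M$ into a decay pair $(C_0,\omega)$; such results exist (for contraction semigroups on Hilbert spaces one can take $\omega$ of order $1/M$), but you should cite or prove a specific version since the constant $C_0$ matters for effectivity. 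Finally, for $L\in\mathcal N$ you take $H$ to be the range of the spectral projection onto $\{\Re\lambda<0\}$; this is indeed $S(t)$-invariant so the restricted semigroup is still a contraction in the $L^2$ norm, which keeps your reduction valid, but one must then re-derive the uniform resolvent bound for $A|_H$ and verify that removing the finitely many imaginary eigenvalues does not reintroduce a small denominator near those $\tau$-values --- again a quantitative point you have only sketched.
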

\noindent Here  $\mathcal{N}$ is the so called  critical length set and is given by 
\begin{equation}
\mathcal{N}= \left\{2\pi\sqrt{\frac{k^2+kl+l^2}{3}}; k, l\in \mathbb{N}^*\right\}. \notag
\end{equation}

Actually following Lions' H.U.M. \cite{Lions-Hilbert} (see also \cite{coron}) the optimal estimate of the observability inequality \eqref{eq:c} (or \eqref{eq:c:H})  implies the exact controllability of the  KdV equation with some optimal control $a(t)\in L^2(0, T)$.   Rosier \cite{rosier97}  proved that such linear controlled system is exactly controllable if and only if $L\notin \mathcal{N}$.  

Though not controllable in critical cases, we can decompose $L^2$ by $H\oplus M$, where the subspaces $H$ and $M$ are controllable and  uncontrollable parts respectively.  
Later on it is proved successively in \cite{coron04, cerpa07, cerpa09} that the nonlinear controlled KdV system, $i.e. \;u_t+ u_x+ u_{xxx}+ uu_x=0$, is locally controllable with some $a(t)=u_x(t, L)$ despite $M$, where the  cost can be estimated by the related observability controllability in $H$.   Many further results  are developed concerning controllability, stability and stabilization  on this classical model, and most of them are based on the value of  the observability constant.  For  example, in  \cite{zuazua02} this value is directly used to get exponential (energy) stability on $L^2$ for  non-critical  cases  and exponential stability on $H$ for critical cases;  though the finite dimensional central manifold $M$ makes the linear system not asymptotically stable, it is shown  in \cite{coron15}  that the nonlinear term as well as the exponential decay on $H$ lead to polynomial stability of the system;  more recently, in  \cite{coron-rivas-xiang-kdv-16} exponential stabilization is achieved by quadratic structure on $M$ and of course the  exponential decay on $H$.

In \cite{rosier97} Rosier used a method due to Bardos-Lebeau-Rauch \cite{Bardo-Lebeau-Rauch}, and only provided the existence of such constant, while the value of it remained open.  Thus it is  important and interesting to give an explicit observability estimate.  
Typical and classical ways of solving cost problems are  moment methods \cite{2009-Tucsnak-Weiss-book}, Lebeau-Robbiano strategy type methods \cite{Lebeau-Robbiano-CPDE}, and Carleman estimates \cite{Fursikov-Imanuvilov-book-1997}. The first two consist in investigating the eigenfunctions and decomposing the states by them, see for example \cite{Lissy-2014, Lebeau-sharp-2016}.  However, in our case  the related eigenfunctions do not form a Riesz basis, due to the fact that the operator is neither self-adjoint nor skew-adjoint. In fact they are  not even  complete in $L^2(0, L)$, see \cite{2019-xiang-SICON}, which prevents us from directly applying those methods.  Due to the existence of the critical length set, it does not seem natural to consider Carleman estimates.
\\

In this paper, we introduce a constructive approach  that quantifies the observability constant.  We concentrate on  the proof of \eqref{eq:c} for non-critical cases,  mainly presented in Section \ref{sec-main-proof}. Then we  comment in Section \ref{sec-critical} that almost the same proof leads to inequality \eqref{eq:c:H} for critical cases.   More precisely, inequality \eqref{eq:c}  can be achieved in two steps.  Let us denote by $S(t)$ the corresponding semi-group of the operator $Au:=-u_x - u_{xxx}, u(t,0) = u(t,L) = u_x(t,L) = 0$.  
\begin{prop}\label{prop:1} Let $K_1\geq 1$, $L\notin\mathcal{N}$. There exists $\gamma = \gamma(L, K_1)>0$ effectively computable such that the set $\mathcal{B}_{\gamma}(K_1)$,
\begin{align*}
\mathcal{B}_{\gamma}= \mathcal{B}_{\gamma}(K_1):= \big\{u\in H^3&(0,L; \mathbb{C});\,\|u\|_{L^2} = 1,\,\|u\|_{H^3}\leq K_1,\,\,u(0) = u(L) = u_x(L) = 0,\\&|u_x(0)|<\gamma,\; \inf_{\lambda\in \mathbb{C}}\|\lambda u - u_x - u_{xxx}\|_{L^2}<\gamma\big\}
\end{align*}
is empty.
\end{prop}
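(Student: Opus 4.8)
\emph{Proof strategy.} The plan is to recast Rosier's compactness/unique-continuation argument as an explicit perturbation estimate for the third order ODE behind the eigenvalue problem. Fix $L\notin\mathcal{N}$ and $K_1\ge1$, and suppose $u\in\mathcal{B}_{\gamma}(K_1)$. First I would choose the minimizer $\lambda:=\langle u_x+u_{xxx},u\rangle_{L^2}$ of $\mu\mapsto\|\mu u-u_x-u_{xxx}\|_{L^2}$ and put $g:=\lambda u-u_x-u_{xxx}$; then $\|g\|_{L^2}<\gamma$ by definition of $\mathcal{B}_{\gamma}$, while $|\lambda|\le\|u_x+u_{xxx}\|_{L^2}\le2\|u\|_{H^3}\le2K_1=:\Lambda$, so $\lambda$ is confined to a compact disk depending only on $K_1$. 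Using the Sobolev embedding $H^3(0,L)\hookrightarrow C^2([0,L])$ with its explicit constant, one gets $\|u\|_{C^2([0,L])}\le C_S(L)K_1$; writing $\beta:=u_{xx}(0)$ and $\varepsilon_1:=u_x(0)$, we record $|\beta|\le C_S(L)K_1$, $|\varepsilon_1|<\gamma$ and $u(0)=0$.

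Next I would use that $u$ solves $u_{xxx}+u_x-\lambda u=-g$ on $(0,L)$ with Cauchy data $(u,u_x,u_{xx})(0)=(0,\varepsilon_1,\beta)$. Let $\psi_{\lambda}$ be the solution of $\psi_{xxx}+\psi_x-\lambda\psi=0$ with data $(0,0,1)$ at $0$, and $\chi_{\lambda}$ the one with data $(0,1,0)$. By Duhamel, $u=\beta\,\psi_{\lambda}+\varepsilon_1\,\chi_{\lambda}+w$, where $w$ solves the inhomogeneous equation with zero Cauchy data. Gronwall on $[0,L]$ gives effectively computable bounds $\|\psi_{\lambda}\|_{C^2},\|\chi_{\lambda}\|_{C^2}\le C_1(L,K_1)$ and $\|w\|_{C^2}\le C_1(L,K_1)\|g\|_{L^2}\le C_1(L,K_1)\gamma$, all of the form $e^{c(1+K_1)L}$. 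Plugging this into the normalization $\|u\|_{L^2}=1$ yields $1\le|\beta|\,\|\psi_{\lambda}\|_{L^2}+\gamma\,\|\chi_{\lambda}\|_{L^2}+C_1\gamma\le C_1(L,K_1)|\beta|+C_1'(L,K_1)\gamma$, hence $|\beta|\ge c_0(L,K_1)>0$ provided $\gamma\le1/(2C_1')$.

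Then I would invoke the two exact endpoint conditions $u(L)=u_x(L)=0$ contained in the definition of $\mathcal{B}_{\gamma}$. From $u=\beta\psi_{\lambda}+\varepsilon_1\chi_{\lambda}+w$ one gets $\beta\psi_{\lambda}(L)=-\varepsilon_1\chi_{\lambda}(L)-w(L)$ and $\beta\psi_{\lambda}'(L)=-\varepsilon_1\chi_{\lambda}'(L)-w'(L)$, whence, using $|\beta|\ge c_0$ and the bounds above,
\[
|\psi_{\lambda}(L)|+|\psi_{\lambda}'(L)|\ \le\ \frac{4\,C_1(L,K_1)}{c_0(L,K_1)}\,\gamma\ =:\ C_2(L,K_1)\,\gamma .
\]
So any $u\in\mathcal{B}_{\gamma}(K_1)$ produces a $\lambda$ with $|\lambda|\le\Lambda$ and $|\psi_{\lambda}(L)|+|\psi_{\lambda}'(L)|<C_2(L,K_1)\gamma$. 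To close the argument, note that a nonzero solution of $\psi_{xxx}+\psi_x-\lambda\psi=0$ vanishing together with its first derivative at both $0$ and $L$ must have $\psi_{xx}(0)\ne0$, hence is a scalar multiple of $\psi_{\lambda}$; thus $\psi_{\lambda}(L)=\psi_{\lambda}'(L)=0$ for some $\lambda$ means exactly that the problem $\psi_{xxx}+\psi_x-\lambda\psi=0$, $\psi(0)=\psi_x(0)=\psi(L)=\psi_x(L)=0$ has a nontrivial solution, which by Rosier's spectral analysis \cite{rosier97} occurs if and only if $L\in\mathcal{N}$. Since $L\notin\mathcal{N}$, the entire map $\lambda\mapsto(\psi_{\lambda}(L),\psi_{\lambda}'(L))$ never vanishes, so $m(L,K_1):=\inf_{|\lambda|\le\Lambda}\big(|\psi_{\lambda}(L)|+|\psi_{\lambda}'(L)|\big)>0$, and any $\gamma<m(L,K_1)/C_2(L,K_1)$ forces $\mathcal{B}_{\gamma}(K_1)=\varnothing$.

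The hard part will be the last step: upgrading the compactness bound $m(L,K_1)>0$ to an \emph{effectively computable} lower bound. For this I would use the explicit representation of $\psi_{\lambda}$ through the roots $p_1,p_2,p_3$ of $r^3+r-\lambda$ (with $p_1+p_2+p_3=0$, $\sum_{i<j}p_ip_j=1$, $p_1p_2p_3=\lambda$): the $\lambda$'s in $\{|\lambda|\le\Lambda\}$ with $\psi_{\lambda}(L)=0$ are the finitely many eigenvalues of the three-point problem $\psi(0)=\psi_x(0)=\psi(L)=0$; near each such $\lambda_k$ one has $|\psi_{\lambda}(L)|\gtrsim|\lambda-\lambda_k|$ with an explicit constant; at $\lambda_k$ the value $\psi_{\lambda_k}'(L)$ is nonzero \emph{because} $L\notin\mathcal{N}$, with modulus quantifiable in terms of the distance from $L$ to $\mathcal{N}$; and away from the $\lambda_k$'s, $|\psi_{\lambda}(L)|$ is bounded below. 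Turning these three local estimates into a single explicit $m(L,K_1)$ is where the constructive input really enters; everything preceding it is routine Gronwall bookkeeping with exponential-in-$K_1L$ constants.
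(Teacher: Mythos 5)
Your strategy is a genuinely different route from the paper's. The paper extends $u$ and $f$ by zero to all of $\mathbb{R}$, applies the Fourier--Laplace transform, and works with the rational-exponential representation
\[
\hat u(\xi) = i\,\frac{\alpha - \beta e^{-iL\xi} + \delta i\xi + \hat f(\xi)}{p-\xi+\xi^3}
\]
on the complex plane, using Cauchy's formula to bound $\hat u$, then forcing the denominator's roots to line up with the zeroes of $\alpha - \beta e^{-iL\xi}$, which yields the quantitative contradiction with $L\notin\mathcal N$ through the relation $\big|L^2 - (2\pi\sqrt{(k^2+kl+l^2)/3})^2\big|\lesssim r$. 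Your approach stays in physical space: shoot the ODE from $x=0$, decompose $u=\beta\psi_\lambda+\varepsilon_1\chi_\lambda+w$, use Gronwall, then exploit the two exact terminal conditions to force $|\psi_\lambda(L)|+|\psi'_\lambda(L)|\lesssim\gamma$. Up to this point your proposal is correct and arguably cleaner, and the "$|\beta|\ge c_0$" step is sound.

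The gap is in the final paragraph, and it is not peripheral: it is exactly where all the effectiveness lives. You reduce the claim to producing an explicit lower bound $m(L,K_1)=\inf_{|\lambda|\le\Lambda}\bigl(|\psi_\lambda(L)|+|\psi'_\lambda(L)|\bigr)>0$, and then only \emph{sketch} three local estimates that would combine to give it (a lower bound near the eigenvalues of the two-point problem $\psi(0)=\psi_x(0)=\psi(L)=0$, a nondegeneracy of $\psi'_{\lambda_k}(L)$ "quantifiable in terms of $\dist(L,\mathcal N)$", and a uniform lower bound away from those eigenvalues). None of these is proved, and the middle one is precisely as hard as the whole proposition: to quantify $\psi'_{\lambda_k}(L)\neq0$ you must re-derive, explicitly, how close $L$ is to the lattice $\mathcal N$ in terms of the root configuration of $r^3+r-\lambda_k$, which is exactly what the paper does in the last block of its proof via the Vieta relations $\xi_0+\xi_1+\xi_2=0$, $\sum\xi_i\xi_j=-1$ applied to the roots localized near $\mu_0+2\pi k/L$. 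Until you carry out that step (and also the "finitely many $\lambda_k$" and "bounded below away from them" claims, with explicit constants), the argument delivers only the non-effective statement that $m(L,K_1)>0$, i.e.\ Rosier's original conclusion, not Proposition~\ref{prop:1}. A secondary, fixable slip: $|\lambda|\le\|u_x+u_{xxx}\|_{L^2}\le 2K_1$ is not correct as written, since bounding $\|u_x\|_{L^2}$ by $\|u\|_{H^3}$ requires an interpolation constant (the paper uses $1+\sqrt{E^1_3}$); this changes $\Lambda$ but not the structure.
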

\begin{prop}\label{prop:2} There exist $\bar{K}_1(L)$ and $T_0(L)$ such that for any $\gamma>0$ there is $\epsilon = \epsilon(L, \gamma)>0$ effectively computable with the property that, if there are $u\in L^2(0,L)\backslash\{0\},  K_1\geq \bar{K}_1(L)$,  and $T\geq T_0(L)$  satisfying
\begin{equation}\label{flux-cond}
\int_0^T \big|\big(S(t)u\big)_x(t,0)\big|^2\,\,dt<\epsilon \|u\|_{L^2(0,L)}^2, 
\end{equation}
then $\mathcal{B}_{\gamma}(K_1)$ is not empty.
\end{prop}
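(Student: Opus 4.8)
\emph{The plan is to prove the equivalent contrapositive}: assuming $\mathcal{B}_{\gamma}(K_1)=\emptyset$ for some $K_1\geq\bar{K}_1(L)$, I would deduce an observability lower bound $\int_0^T|(S(t)u_0)_x(t,0)|^2\,dt\geq\epsilon\|u_0\|_{L^2}^2$ valid for all $u_0\neq 0$ and all $T\geq T_0(L)$. Normalising $\|u_0\|_{L^2}=1$ and writing $v(t)=S(t)u_0$, the first step is the dissipation identity $\frac{d}{dt}\|v(t)\|_{L^2}^2=-|v_x(t,0)|^2$ (integrate $2\,\mathrm{Re}\langle Av,v\rangle$ by parts and use $v(t,0)=v(t,L)=v_x(t,L)=0$), which gives $\int_0^T|v_x(t,0)|^2\,dt=1-\|v(T)\|_{L^2}^2$. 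So it is enough to exclude $\|v(T)\|_{L^2}^2>1-\epsilon$; in that case monotonicity forces $\|v(t)\|_{L^2}^2\in(1-\epsilon,1]$ throughout $[0,T]$, and I would look for a contradiction once $T$ is large.

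Next I would pass to a Fourier decomposition in time. Choosing a cutoff $\chi$ supported in $(0,T)$ with $\chi\equiv 1$ on $[T/4,3T/4]$, setting $w=\chi v$ and writing $w(t)=\sum_{k\in\mathbb{Z}}\widehat{w}_k\,e^{2\pi ikt/T}$ in $L^2(0,L)$, one tests $w_t=Aw+\chi'v$ against $e^{-2\pi ikt/T}$ — there is no boundary term since $w$ vanishes near $t=0,T$ — to obtain
\[
(\mu_k-A)\widehat{w}_k=\widehat{g}_k,\qquad \mu_k=\tfrac{2\pi i k}{T},\qquad \|\widehat{g}_k\|_{L^2}\leq\tfrac{2}{T},
\]
the source $\widehat{g}_k$ being assembled from the $L^2$--data $\chi'v$ (concretely from $v(T/4)$ and $v(3T/4)$). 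Elliptic regularity for $\partial_x+\partial_x^3$ together with the boundary traces inherited from $v$ places each $\widehat{w}_k$ in $D(A)=H^3(0,L)\cap\{u(0)=u(L)=u_x(L)=0\}$. Parseval then gives $\sum_k\|\widehat{w}_k\|_{L^2}^2\geq\frac1T\int_{T/4}^{3T/4}\|v\|_{L^2}^2\geq\frac{1-\epsilon}{2}$ and $\sum_k|\widehat{w}_{k,x}(0)|^2\leq\frac1T\int_0^T|v_x(t,0)|^2\,dt<\frac{\epsilon}{T}$. Each renormalised mode $\widehat{w}_k/\|\widehat{w}_k\|_{L^2}$ thus lies in $D(A)$, approximately solves the spectral equation with parameter $-\mu_k$ (defect $\|\widehat{g}_k\|_{L^2}/\|\widehat{w}_k\|_{L^2}$), and has small boundary flux on average — exactly the raw material from which an element of $\mathcal{B}_\gamma(K_1)$ would be built in the direct formulation.

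To control high frequencies I would use the Kato smoothing estimate $\int_0^1\|S(t)f\|_{H^1(0,L)}^2\,dt\lesssim_L\|f\|_{L^2}^2$; iterating it over unit intervals and combining with $\|S(t)f\|_{L^2}\leq\|f\|_{L^2}$ yields the $T$--uniform bound $\frac1T\int_0^T\|v\|_{H^1}^2\,dt\leq C_1(L)$, hence $\sum_k\|\widehat{w}_k\|_{H^1}^2\leq C_1(L)$. Together with $\|\widehat{w}_k\|_{H^3}\lesssim_L(1+|\mu_k|)\|\widehat{w}_k\|_{L^2}+T^{-1}$ (from the resolvent identity) and the interpolation $\|f\|_{H^1}^3\lesssim_L\|f\|_{L^2}^2\|f\|_{H^3}$, this lets one fix $M=M(L)$ (large, and dodging the moduli of the eigenvalues of $A$) with $\sum_{|\mu_k|>M}\|\widehat{w}_k\|_{L^2}^2\leq\frac14$, so that $\sum_{|\mu_k|\leq M}\|\widehat{w}_k\|_{L^2}^2\geq\frac18$ once $\epsilon\leq\frac14$. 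Now the emptiness hypothesis enters through the lower bound: for $w\in D(A)$ and $z\in i\mathbb{R}$ with $|z|\leq M$ one has $\|(z-A)w\|_{L^2}\gtrsim\min(\gamma^2,M)\,\|w\|_{L^2}$. Indeed, if $\|w\|_{H^3}\leq K_1\|w\|_{L^2}$ then $w/\|w\|_{L^2}\notin\mathcal{B}_\gamma(K_1)$ forces either $|w_x(0)|\geq\gamma\|w\|_{L^2}$, whence $\|(z-A)w\|\,\|w\|\geq\mathrm{Re}\langle(z-A)w,w\rangle=\tfrac12|w_x(0)|^2$, or $\inf_\lambda\|\lambda w+Aw\|\geq\gamma\|w\|_{L^2}$, whence (taking $\lambda=-z$) $\|(z-A)w\|\geq\gamma\|w\|_{L^2}$; and if $\|w\|_{H^3}>K_1\|w\|_{L^2}$ then $K_1\geq\bar{K}_1(L)$ and third--order ellipticity give $\|Aw\|\geq 2M\|w\|$, so $\|(z-A)w\|\geq\|Aw\|-|z|\|w\|\geq M\|w\|$. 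In particular $i[-M,M]\subset\rho(A)$, so $\|(\mu_k-A)^{-1}\|_{L^2\to L^2}\leq R_0$ on $|\mu_k|\leq M$ for a finite $R_0$. Feeding this into the resolvent identities, $\frac18\leq\sum_{|\mu_k|\leq M}\|\widehat{w}_k\|_{L^2}^2\leq R_0^2\sum_{|\mu_k|\leq M}\|\widehat{g}_k\|_{L^2}^2\leq R_0^2\cdot\#\{|\mu_k|\leq M\}\cdot\tfrac{4}{T^2}\lesssim R_0^2\,M(L)\,T^{-1}$, which is the sought contradiction once $T$ passes the resulting threshold. (For $L\in\mathcal{N}$ there is nothing to prove: a purely imaginary eigenfunction of $A$, renormalised, already belongs to $\mathcal{B}_\gamma(K_1)$ for every $\gamma>0$ and every $K_1\geq\bar{K}_1(L)$.)

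\emph{Anticipated obstacles.} The first is the $T$--uniform high--frequency truncation: since smoothing only buys one spatial derivative, turning $\sum_k\|\widehat{w}_k\|_{H^1}^2\leq C_1(L)$ into genuine decay of the high--frequency $L^2$--mass must go through the almost--eigenfunction structure of the modes, and one has to keep $M(L)$, $C_1(L)$ and all implied constants effective and $T$--independent. The second, and subtler, is forcing the parameter dependence into the shape the statement requires — $\bar{K}_1$ and $T_0$ depending on $L$ only, with $\gamma$ affecting $\epsilon$ alone. The naive estimate controls $R_0$ by $\gamma^{-2}$ and so leaves a threshold $T_0\sim_L\gamma^{-4}$; to strip the $\gamma$ out of $T_0$ one uses that $\mathcal{B}_\gamma(K_1)=\emptyset$ already yields the \emph{qualitative} inclusion $i[-M,M]\subset\rho(A)$, so $R_0$ may be taken to be an effectively computable resolvent norm on a compact subset of $\rho(A)$ depending on $L$ alone, and the whole low--frequency block is then bounded with no reference to $\gamma$ ($\epsilon$ being taken an absolute constant). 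Pinning down this finite--dimensional, $\gamma$--free bookkeeping, and carefully justifying the boundary regularity that places each $\widehat{w}_k$ in $D(A)$, is where I expect most of the work to lie.
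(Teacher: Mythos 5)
Your proposal takes a genuinely different route from the paper's. The paper's proof of Proposition 2 is an iterative, semigroup-side construction: starting from $u$, it repeatedly applies a smoothing time-step $S(\bar t_n)$ to gain $H^3$ control, applies $A$, Gram--Schmidt orthogonalizes, and tracks that the boundary fluxes and inner products stay quantitatively small (Observations $(i)$--$(vii)$); the compactness bound of Corollary~\ref{cor:boundon steps} forces this process to stop after at most $B(L,K)$ steps, and whenever it stops the constructed orthonormal family spans a finite-dimensional almost-invariant subspace $V$ for which $\Pi_{\mathbb{C}V}A$ has an eigenvector in $\mathcal{B}_\gamma(K_1)$. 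By contrast, you pass to Fourier modes in time after a temporal cutoff, turn the flow into approximate resolvent equations $(\mu_k-A)\widehat w_k = \widehat g_k$ with $\|\widehat g_k\|\lesssim 1/T$, dispose of high frequencies, and then apply the quantitative lower bound on $\|(z-A)w\|$ extracted from $\mathcal{B}_\gamma(K_1)=\emptyset$ to the low-frequency block. Where the paper spends its effort on an inductive bookkeeping of flux-smallness across the Gram--Schmidt chain, your method localizes the difficulty in a resolvent estimate. Both strategies are recognisable observability techniques, and your strategy is closer in spirit to classical Hautus-type arguments.

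That said, there is a genuine gap, and it sits precisely where you yourself flag it: the $T$-uniform high-frequency truncation. The two facts you invoke for it --- the resolvent-implied upper bound $\|\widehat w_k\|_{H^3}\lesssim (1+|\mu_k|)\|\widehat w_k\|_{L^2}+T^{-1}$ and the Gagliardo--Nirenberg inequality $\|f\|_{H^1}^3\lesssim\|f\|_{L^2}^2\|f\|_{H^3}$ --- combine to give an \emph{upper} bound $\|\widehat w_k\|_{H^1}\lesssim (1+|\mu_k|)^{1/3}\|\widehat w_k\|_{L^2}+\cdots$, whereas to convert $\sum_k\|\widehat w_k\|_{H^1}^2\le C_1(L)$ into decay of $\sum_{|\mu_k|>M}\|\widehat w_k\|_{L^2}^2$ you need the reverse inequality, a \emph{lower} bound of the form $\|\widehat w_k\|_{H^1}\gtrsim |\mu_k|^{1/3}\|\widehat w_k\|_{L^2}$ for approximate eigenfunctions. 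Such a bound is morally true (the ODE $\mu w+w'+w'''=g$ forces oscillation at spatial frequency $|\mu|^{1/3}$) but it is not a consequence of what you cite, and getting it cleanly is nontrivial: taking imaginary parts of $\langle(\mu_k-A)\widehat w_k,\widehat w_k\rangle$ produces a cross term $\langle\widehat w_k'',\widehat w_k'\rangle$ that is not sign-definite, and the constants in the Gagliardo--Nirenberg step are not favourable enough to close the estimate by a simple absorption. Equivalently, you are implicitly asserting a uniform resolvent bound for $A$ along high imaginary frequencies, which (by Gearhart--Prüss) is essentially the exponential stability you ultimately want --- so the argument risks being circular unless this step is given an independent, quantitative proof. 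The paper's compactness-plus-smoothing construction sidesteps this altogether: it never needs a high-frequency resolvent estimate, because the smoothing operator $S(t_1)$ automatically suppresses high modes. A secondary issue, also acknowledged in your sketch, is making $T_0$ depend on $L$ alone rather than on $\gamma$: your proposed remedy (use the qualitative inclusion $i[-M,M]\subset\rho(A)$ and then bound the resolvent norm "effectively") still requires a concrete, $\gamma$-free computation of $\sup_{|z|\leq M}\|(z-A)^{-1}\|$ on a compact subset of $\rho(A)$; this is plausible via an explicit Green's function but is not supplied, and the cleaner route --- as in the paper --- is to make $T_0$ depend only on a compactness count $B(L,K_0)$ which is manifestly $\gamma$-free.

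Two minor remarks: the source $\widehat g_k$ is assembled from $\chi'(t)v(t)$ on the whole support of $\chi'$, not merely from the two slices $v(T/4)$ and $v(3T/4)$; and the claim that each $\widehat w_k\in D(A)$ does require the cutoff to vanish near $t=0$ so that the smoothing estimate of Lemma~\ref{thm-flow-smooth} can be invoked --- as written this is correct but worth making explicit.
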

In conjunction with the preceding propositions, this then implies that we can set $c= \epsilon\big(L, \gamma(L, \bar{K}_1(L)\big)$ in \eqref{eq:c}  for Theorem \ref{Thm-main}.
\begin{remark}
We only prove Proposition \ref{prop:1} and  Proposition \ref{prop:2} for $L\geq 4$, though the same way of the proof also apply to the other cases. In fact, when $L$ is below $\sqrt{3}\pi$ (which is small than the first critical length $2\pi$), an alternative simple proof in \cite{cerpatu} gives an explicit observability constant, which, for the completeness of the paper, is also presented, see Appendix \ref{appL4}. 
\end{remark}

\section{Some properties of $S(t)$}
From now on we always assume that  $L\geq 4$.  The goal of this section is to develop several properties concerning the smoothing effect of $S(t)$.   All the results stated here will be demonstrated, and all the constants will be explicitly characterized, in Appendix \ref{app-est}.

Due to some compatibility issues, we define the following Sobolev spaces $H_{(0)}^k$ satisfying natural compatibility conditions on the boundary,
\begin{align*}
H_{(0)}^0(0, L)&:= L^2(0, L);\\
H_{(0)}^1(0, L)&:= \{f\in H^1, f(0)=f(L)=0 \}; \\
H_{(0)}^2(0, L)&:= \{f\in H^2, f(0)=f(L)=f'(L)=0 \}; \\
H_{(0)}^3(0, L)&:= \{f\in H^3, f(0)=f(L)=f'(L)=0 \};\\
H_{(0)}^4(0, L)&:= \{f\in H^4\cap H_{(0)}^3, (Af)(0)=(Af)(L)=0 \};\\
H_{(0)}^5(0, L)&:= \{f\in H^5\cap H_{(0)}^3, (Af)(0)=(Af)(L)=(Af)_x(L)=0 \}; \\
H_{(0)}^6(0, L)&:= \{f\in H^6\cap H_{(0)}^3, (Af)(0)=(Af)(L)=(Af)_x(L)=0 \},
\end{align*}
with the same norm as $H^k$:
\[ 
\|f\|_{H^k(0, L)}^2:= \int_0^L |f^{(k)}(x)|^2+ |f(x)|^2 \, dx.
\]
\begin{lemma}\label{lem-sob-emb}
There is a constant $E_m^n$ which only depends on $n< m$ such that 
\begin{equation}
\int_0^L |f^{(n)}(x)|^2\,dx\leq  E_m^n\left(\delta^{m-n}\int_0^L |f^{(m)}(t)|^2 \,dt+ \delta^{-n}\int_0^L |f(t)|^2 \,dt\right), \; \forall \delta\in (0, 1].\notag
\end{equation}
\end{lemma}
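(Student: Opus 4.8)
The estimate is the Gagliardo--Nirenberg interpolation inequality for an intermediate derivative on a bounded interval; on the Fourier side it is immediate from $|\xi|^{2n}\le 1+|\xi|^{2m}$ (valid for $0\le n\le m$), so the only real issue is to produce an explicit value of $E_m^n$ without appealing to an extension operator. The plan is to reduce everything to the scale-invariant model case $m=2$, $n=1$, treat that case by hand, propagate it from short intervals to all of $(0,L)$ by a covering argument, and finally climb to general $(m,n)$ by induction on $m$.

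For the model case I would first establish the Ehrling-type bound $\int_0^1|g'|^2\le A\bigl(\int_0^1|g''|^2+\int_0^1|g|^2\bigr)$ for $g\in H^2(0,1)$, with an explicit absolute $A$. Fixing once and for all a cutoff $\theta\in C^\infty_c(0,1)$ equal to $1$ on $[1/4,3/4]$, two integrations by parts (with vanishing boundary terms) give $\int_0^1\theta|g'|^2=\tfrac12\int_0^1\theta'' g^2-\int_0^1\theta\,g\,g''$, which bounds $\int_{1/4}^{3/4}|g'|^2$ by $M:=\tfrac12(\|\theta''\|_\infty+1)\|g\|_{L^2}^2+\tfrac12\|g''\|_{L^2}^2$ after Cauchy--Schwarz and Young; choosing $x_*\in[1/4,3/4]$ with $|g'(x_*)|^2\le 2M$ and writing $g'(x)=g'(x_*)+\int_{x_*}^x g''$ controls $|g'|^2$ on the two end intervals by $4M+2\|g''\|_{L^2}^2$, and adding the three pieces yields $A$ explicitly. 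Because under the dilation $x\mapsto (x-a)/\ell$ the quantity $\int|g^{(k)}|^2$ scales like $\ell^{2k-1}$, this is equivalent to $\int_I|g'|^2\le A\bigl(|I|^2\int_I|g''|^2+|I|^{-2}\int_I|g|^2\bigr)$ on an arbitrary interval $I$.

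Summing this last inequality over a partition of $(0,L)$ into $\lceil L/\sqrt\delta\,\rceil$ subintervals of length comparable to $\sqrt\delta$ (legitimate since $\sqrt\delta\le 1\le L$) gives, after absorbing the bounded combinatorial factor, $\int_0^L|g'|^2\le E\bigl(\delta\int_0^L|g''|^2+\delta^{-1}\int_0^L|g|^2\bigr)$ for all $\delta\in(0,1]$ — the lemma for $(m,n)=(2,1)$. Replacing $g$ by $g^{(j-1)}$ upgrades this to the consecutive-derivative estimate $\|g^{(j)}\|_{L^2}^2\le E\bigl(\delta\|g^{(j+1)}\|_{L^2}^2+\delta^{-1}\|g^{(j-1)}\|_{L^2}^2\bigr)$ for every $j\ge 1$. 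Denoting by $P(j,k)$ the assertion of the lemma with $(m,n)=(k,j)$, the cases $P(0,k)$ and $P(k,k)$ are trivial, and I would prove the rest by induction on $m$: combining the consecutive estimate at $j=m-1$ with $P(m-2,m-1)$ and choosing the auxiliary scale so that the $\|g^{(m-1)}\|_{L^2}^2$ term on the right is absorbed with coefficient $\tfrac12$ gives $P(m-1,m)$, after which composing $P(m-1,m)$ with $P(n,m-1)$ (taking the two auxiliary scales equal) gives $P(n,m)$. Each step multiplies the constant by an explicit amount depending only on $m$, so $E_m^n$ is effectively computable, though far from optimal.

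The one genuinely delicate point is the model estimate: a bare integration by parts on $(0,1)$ leaves the boundary contribution $[g\,g']_0^1$, which cannot be absorbed, so one must localise the interior with a cutoff and recover the two end intervals separately through a mean-value point; once that is done, the dilation, the covering sum, and the telescoping induction are purely mechanical and affect only the size of $E_m^n$.
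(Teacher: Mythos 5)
Your proposal is correct, and it shares the paper's overall three-stage architecture: first prove the $(m,n)=(2,1)$ case on a reference interval with an explicit constant, then transfer it to $(0,L)$ for all $\delta\in(0,1]$ by scaling plus a covering-and-summing argument, and finally climb to general $n<m$ by an elementary induction built on the consecutive-derivative estimate $\|g^{(j)}\|_{L^2}^2\leq E\big(\delta\|g^{(j+1)}\|_{L^2}^2+\delta^{-1}\|g^{(j-1)}\|_{L^2}^2\big)$ together with an absorption step. The genuine divergence is in the base case. The paper avoids any cutoff: it writes $f'(x)=f'(\lambda)+\int_\lambda^x f''$ with $\lambda$ a mean-value point between a $\xi$ in the first third and an $\eta$ in the last third of the interval, and then averages over $\xi\in(0,L/3)$ and $\eta\in(2L/3,L)$ to replace the pointwise values $|f(\xi)|,|f(\eta)|$ by $\int_0^L|f|$. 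You instead integrate $\theta|g'|^2$ by parts against a compactly supported cutoff to control the interior $[1/4,3/4]$ and recover the two end intervals from a mean-value point; this is the textbook proof of Ehrling's lemma, but it drags the auxiliary quantity $\|\theta''\|_\infty$ into the constant, which one then has to fix by choosing a concrete $\theta$. Both routes give explicit elementary constants; the paper's averaging trick is arithmetically a bit leaner (it produces $E_2^1=42$ with no extraneous parameters). Your induction scheme, namely combining the consecutive estimate at level $m-1$ with $P(m-2,m-1)$ and an absorption to get $P(m-1,m)$, then composing $P(n,m-1)$ with $P(m-1,m)$ at equal auxiliary scales, is a reordering of the paper's two recursions $E^m_{m+1}=2^m 42^m(E^{m-1}_m)^m$ and $E^{k-1}_m=E^{k-1}_k(E^k_m+1)$, and it yields effectively computable constants of the same general shape. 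One small imprecision worth noting: your integration-by-parts identity $\int_0^1\theta|g'|^2=\tfrac12\int_0^1\theta'' g^2-\int_0^1\theta\,g\,g''$ is written for real $g$; since the lemma is also used for complex-valued functions, one should take real parts and replace $g^2$ by $|g|^2$ and $g\,g''$ by $\bar g\,g''$, which changes nothing in the estimates.
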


Now we are ready to prove the following properties concerning  regularities of the flow $S(t)$.  Suppose that $f_0\in L^2$, $f(t, x)=S(t)f_0$, then simple integration by parts yields
\begin{gather}
\int_0^T\int_0^L f_x^2(t, x)\,dx \,dt\leq \frac{T+L}{3}\int_0^L f_0^2(x)\,dx, \label{fxbound}\\
\int_0^L f^2(t, x)\,dx= \int_0^L f_0^2(x)\,dx-\int_0^T f_x^2(t, 0)\,dt\leq \int_0^L f_0^2(x)\,dx.\label{fluxine}
\end{gather}
The preceding two inequalities tell us that starting from some $L^2$ data the solution will stay in the same space, moreover, on \textit{almost} every (time) $t\in[0, T]$ the solution becomes $H^1(0, L)$ thus gains  regularity.      Actually, similar regularity results hold for arbitrary order:
\begin{lemma}\label{thm-flow-con}
Let $k\in \{0, 1, 2, 3, 4, 5, 6\}$. If the initial data $f_0$ belongs to $H_{(0)}^k$, then the flow $S(t)f_0$ stays in $C([0, T]; H_{(0)}^k(0, L))\cap L^2(0, T; H_{(0)}^{k+1}(0, L))$.
Moreover, there exist constants $F_0^k(L)$ and  $F_1^k(L)$ independent of the choice of $f\in H_{(0)}^k$ and $T\in (0, L]$ such that 
\begin{align}
\|S(t)f_0\|_{C([0, T]; H_{(0)}^k(0, L))}&\leq  F_0^k \|f\|_{H_{(0)}^k(0, L)}, \label{F0}\\
\|S(t)f_0\|_{L^2(0, T; H_{(0)}^{k+1}(0, L))}&\leq   F_1^k \|f\|_{H_{(0)}^k(0, L)}.\label{F1}
\end{align}
\end{lemma}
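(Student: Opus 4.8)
The plan is to prove the estimates by induction on $k$, establishing the energy identities for the semigroup acting on sufficiently smooth data (say $f_0\in H^6_{(0)}$ first, or an appropriate dense subclass) and then closing by density. The base case $k=0$ is exactly the two displayed inequalities \eqref{fxbound} and \eqref{fluxine}, which give $F_0^0=1$ and $F_1^0=\sqrt{(T+L)/3}\le \sqrt{(2L)/3}$ since $T\le L$. For the inductive step I would differentiate the equation: if $f=S(t)f_0$ solves $f_t=-f_x-f_{xxx}$, then for each $j$ the quantity $g=A^{\lfloor j/?\rfloor}f$ — more precisely $A^m f$ for even orders and a single spatial derivative correction for odd orders — again solves a KdV-type equation with the \emph{same} boundary conditions, because the spaces $H^k_{(0)}$ were defined exactly so that $A$ maps $H^{k+3}_{(0)}\to H^k_{(0)}$. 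This is the reason the boundary conditions were packaged the way they were in the list preceding the lemma: $(Af)(0)=(Af)(L)=(Af)_x(L)=0$ is precisely the statement that $Af$ lies in $H^3_{(0)}$, so $S(t)$ commutes with $A$ on the relevant domains.

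Concretely, for $k=3$: with $h:=Af=-f_x-f_{xxx}\in H^3_{(0)}$ one has $h(t)=S(t)(Af_0)$, so applying the $k=0$ estimates to $h$ controls $\|Af\|_{C([0,T];L^2)}$ and $\|Af\|_{L^2(0,T;H^1_{(0)})}$ by $\|Af_0\|_{L^2}\le \|f_0\|_{H^3}$ (after bounding $\|Af_0\|_{L^2}\lesssim \|f_0\|_{H^3}$ via Lemma \ref{lem-sob-emb} with $\delta=1$, since $Af_0$ involves $f_0'$ and $f_0'''$). Then elliptic regularity for the operator $A$ with its boundary conditions — i.e. inverting $A$, which is an isomorphism $H^k_{(0)}\to H^{k-3}$-type statement — upgrades control of $\|Af\|$ to control of $\|f\|_{H^3}$ and $\|f\|_{H^4}$; the constants here are the $F$'s, expressed through the $E^n_m$ of Lemma \ref{lem-sob-emb} and the (explicit) norm of $A^{-1}$. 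The even cases $k=2m$ and the remaining odd cases $k=2m+1$ iterate this: apply the scheme to $A^m f$, respectively to $(A^m f)$ together with one more derivative, each time using that the defining boundary conditions of $H^{k}_{(0)}$ say exactly "$A^m f_0\in H^{0\text{ or }1}_{(0)}$". The continuity in time, $S(t)f_0\in C([0,T];H^k_{(0)})$, follows from the $k=0$ continuity applied to $A^mf$ plus the elliptic estimate, which is continuous in the relevant norms.

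The main obstacle — and the reason the bulk of this is deferred to Appendix \ref{app-est} — is bookkeeping rather than conceptual: one must (i) justify the integration-by-parts/energy identities rigorously at low regularity, which is why one works first with smooth data and a density argument, being careful that the compatibility conditions defining $H^k_{(0)}$ are preserved under approximation; (ii) track that all constants $F_0^k, F_1^k$ are genuinely independent of $T\in(0,L]$ and depend only on $L$ — this forces one to use the scale-invariant interpolation inequality of Lemma \ref{lem-sob-emb} with a fixed choice such as $\delta=1$, and to note that the "bad" factor $(T+L)/3$ is bounded by $2L/3$ on the stated range; and (iii) handle the odd orders, where $A^m f$ alone is not enough and one needs an extra spatial derivative whose boundary behaviour is controlled only in the $L^2(0,T;\cdot)$ (not $C([0,T];\cdot)$) topology — exactly as already visible in the $k=0$ case, where $f\in C([0,T];L^2)$ but only $f\in L^2(0,T;H^1)$. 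I expect no genuine difficulty beyond carefully chaining these explicit constants.
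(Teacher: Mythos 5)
Your proposal gets the right idea for the multiples-of-three cases $k=0,3,6$: since $A$ maps $H^{k+3}_{(0)}\to H^{k}_{(0)}$ and commutes with the flow, one applies the base $k=0$ energy identities to $g=A^m f$ and then recovers full Sobolev control of $f$ from $\|A^m f\|$ via elliptic estimates expressed through the $E^n_m$. That is exactly how the paper treats $k=3$ and $k=6$.

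The gap is in your treatment of $k=1,2,4,5$. You organize the argument around the \emph{parity} of $k$ (``apply $A^m f$ for even orders, $A^m f$ plus one more derivative for odd orders''), but $A=-\partial_x-\partial_x^3$ is a \emph{third}-order operator, so one application of $A$ jumps three Sobolev levels, not two; your own uncertain ``$A^{\lfloor j/?\rfloor}$'' reflects this mismatch. For $k=1$ or $k=2$ you cannot apply $A$ to $f_0$ at all ($f_0\notin H^3$), and your fallback of differentiating once also fails: $\partial_x f$ does not satisfy the required boundary conditions (in particular $\partial_x f(t,0)\not\equiv 0$), so the $k=0$ energy identities cannot be applied to it. The paper closes this gap by \emph{real interpolation of the solution operator}: it interpolates (via the $K$-method, with explicit constants from a quantitative Sobolev interpolation result) the bounded maps $f_0\mapsto S(\cdot)f_0$ between the endpoint pairs $(H^0_{(0)},H^3_{(0)})$ and $(H^3_{(0)},H^6_{(0)})$, obtaining $F^k_0,F^k_1$ for $k=1,2,4,5$. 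This is a genuinely different tool from the Gagliardo--Nirenberg-type norm interpolation of Lemma~\ref{lem-sob-emb} that you invoke: that lemma compares different norms of a single fixed function, whereas here one must bound the operator norm of $S(t)$ between intermediate spaces, which requires interpolating the operator itself. That operator-interpolation step is the missing idea in your plan.
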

\begin{remark}
The same type of smoothing results also hold for the nonlinear KdV flow (for example \cite{bsz}).  But this phenomenon only appears for initial boundary value problems,  it does not exist for KdV flow on whole space.
\end{remark}
An immediate consequence is the following smoothing effect result.
\begin{lemma}\label{thm-flow-smooth}
Let $k\in \{1, 2, 3, 4, 5, 6\}$. There exists a constant $F_s^k=F_s^k(L)$ only depending on $L$ such that
\begin{gather}
\|S(t)f_0\|_{H_{(0)}^k(0, L)}\leq  \frac{F_s^k}{t^{k/2}}\|f_0\|_{L^2(0, L)}, \forall t\in (0, T], T\leq L, \label{tleqL} \\
\|S(t)f_0\|_{H_{(0)}^k(0, L)}\leq  \frac{F_s^k}{L^{k/2}}\|f_0\|_{L^2(0, L)}, \forall t\in [L, +\infty).
\end{gather}
\end{lemma}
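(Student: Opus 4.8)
The plan is to prove the estimate by induction on the regularity index $k$, using Lemma \ref{thm-flow-con} as the engine: each of its two bounds \eqref{F0}--\eqref{F1} lets one trade a time integration for one extra derivative of the flow, while the dissipation identity \eqref{fluxine} shows, together with the semigroup property $S(t)=S(t-s)S(s)$, that $t\mapsto\|S(t)f_0\|_{L^2(0,L)}$ is non-increasing. I abbreviate $\|\cdot\| := \|\cdot\|_{L^2(0,L)}$. The base case $k=0$ is the trivial bound $\|S(t)f_0\| \le \|f_0\|$, so one takes $F_s^0 = 1$.

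For the inductive step, suppose $\|S(t)f_0\|_{H_{(0)}^k} \le F_s^k\, t^{-k/2}\,\|f_0\|$ holds for all $f_0\in L^2$ and all $t\in(0,L]$, for some $k\in\{0,1,\dots,5\}$. Fix $f_0$ and $t\in(0,L]$ and set $g := S(t/2)f_0$; by the induction hypothesis $g\in H_{(0)}^k$ with $\|g\|_{H_{(0)}^k}\le F_s^k (t/2)^{-k/2}\|f_0\|$. Applying \eqref{F1} to the datum $g$ on the time interval $(0,t/2]\subset(0,L]$ gives
\[
\int_0^{t/2}\|S(s)g\|_{H_{(0)}^{k+1}}^2\,ds \;\le\; (F_1^k)^2\,\|g\|_{H_{(0)}^k}^2 .
\]
The set of $s\in[t/4,t/2]$ at which $\|S(s)g\|_{H_{(0)}^{k+1}}^2 \le \frac{4}{t}\int_{t/4}^{t/2}\|S(\sigma)g\|_{H_{(0)}^{k+1}}^2\,d\sigma$ has positive measure, and along it $S(s)g\in H_{(0)}^{k+1}$ for a.e.\ $s$; picking such an $s_*$ yields
\[
\|S(s_*)g\|_{H_{(0)}^{k+1}}^2 \;\le\; \frac{4}{t}\int_{t/4}^{t/2}\|S(\sigma)g\|_{H_{(0)}^{k+1}}^2\,d\sigma \;\le\; \frac{4}{t}\,(F_1^k)^2\,(F_s^k)^2\,(t/2)^{-k}\|f_0\|^2 .
\]
Since $S(t)f_0 = S(t/2 - s_*)\,S(s_*)g$ with $t/2 - s_*\in[0,t/4]\subset[0,L]$, the continuity estimate \eqref{F0} at order $k+1$ gives
\[
\|S(t)f_0\|_{H_{(0)}^{k+1}} \;\le\; F_0^{k+1}\,\|S(s_*)g\|_{H_{(0)}^{k+1}} \;\le\; 2^{(k+2)/2}\,F_0^{k+1}\,F_1^k\,F_s^k\; t^{-(k+1)/2}\,\|f_0\| ,
\]
so the induction closes with $F_s^{k+1} := 2^{(k+2)/2}\,F_0^{k+1}\,F_1^k\,F_s^k$. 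Unwinding the recursion, $F_s^k$ is an explicit finite product of absolute constants with the $F_0^j,F_1^j$ $(j\le k)$ from Lemma \ref{thm-flow-con}; since those depend only on $L$ and are effectively computable, so is $F_s^k$. This proves the first inequality of the lemma.

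For the range $t\ge L$, I would write $S(t)f_0 = S(L)\bigl(S(t-L)f_0\bigr)$, apply the first inequality at time $L\in(0,L]$ to the datum $S(t-L)f_0$, and then use that the $L^2$ norm is non-increasing, obtaining $\|S(t)f_0\|_{H_{(0)}^k} \le F_s^k L^{-k/2}\|S(t-L)f_0\| \le F_s^k L^{-k/2}\|f_0\|$, which is the second inequality with the same constant. I would also record, as part of the bootstrap, that $S(t)f_0$ genuinely lies in $H_{(0)}^k$ for $t>0$ (boundary compatibility conditions included): for a.e.\ small $s_1>0$ one has $S(s_1)f_0\in H_{(0)}^1$ by \eqref{fxbound}, and Lemma \ref{thm-flow-con} then propagates membership in $H_{(0)}^j$ forward in time and up to $j=k$. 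I do not expect a genuine obstacle — the argument is a finite induction on Lemma \ref{thm-flow-con}. The only delicate points are bookkeeping: one must keep every intermediate time fed into \eqref{F0}--\eqref{F1} no larger than $L$ (hence the dyadic averaging interval inside $[0,t/2]$ and the composition $S(t/2-s_*)\circ S(s_*)\circ S(t/2)$), and one must track constants carefully enough to certify that $F_s^k=F_s^k(L)$ is effectively computable, as the statement demands.
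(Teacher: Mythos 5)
Your proof is correct and follows essentially the same route as the paper's: apply the averaging/pigeonhole argument to the space–time bound \eqref{F1} to locate an intermediate time at which one additional derivative has been gained, then propagate forward with the continuity bound \eqref{F0}, and iterate from $L^2$ up to $H^k_{(0)}$, finishing the $t\ge L$ case by $S(t)f_0=S(L)S(t-L)f_0$ and $L^2$-monotonicity. The only difference is organizational — the paper first isolates a one-step smoothing estimate and then composes it $k$ times via $S(t)=(S(t/k))^k$, whereas you phrase the same iteration as an induction on $k$ with the smoothing bound at level $k$ as induction hypothesis; this yields a slightly different but equally effective numerical constant $F_s^k$.
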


\begin{remark}
The rate $t^{-k/2}$ in Lemma \ref{thm-flow-smooth} is optimal  by assuming Lemma \ref{thm-flow-con}.   Moreover, both of them can be generalized to $k\in \mathbb{N}$, while more (but similar) efforts are required to get explicit values. 
\end{remark}

For any given $K>0$, let $\mathcal{A}=\mathcal{A}_K$ be
\[\mathcal{A}:= \{u\in H^3(0,L),\,u(0)=u(L)=0,\;\|u\|_{H^3(0,L)}\leq K\}. \] 
Then we have the following simple 
\begin{lemma}\label{lem:compact} 
 There exist $B = B(L, K)$ and a set 
\[
\{f_1,f_2,\ldots, f_B\}\subset H^3(0, L)
\]
such that for each $f\in \mathcal{A}$, there is $f_j$ with 
\[
\|f - f_j\|_{L^2(0,L)}<\frac{\sqrt{2}}{2}. 
\]
\end{lemma}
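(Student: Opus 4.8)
The statement is a quantitative (effective) version of the precompactness of the set $\mathcal{A}_K$ in $L^2(0,L)$, so the plan is to make a standard compactness argument — $H^3 \hookrightarrow L^2$ is compact on a bounded interval — fully explicit by exhibiting an honest finite $\tfrac{\sqrt2}{2}$-net. First I would reduce regularity: by Lemma \ref{lem-sob-emb} (with $m=3$, $n=0$ and also $n=1,2$) every $f\in\mathcal{A}_K$ has $\|f\|_{H^3}\le K$ and hence controlled lower-order norms; in particular I will only need an $H^1$ bound, say $\|f'\|_{L^2}\le C_1(L)K$ and $\|f\|_{L^\infty}\le C_\infty(L)K$ (the latter from the 1D Sobolev embedding, or from Lemma \ref{lem-sob-emb} followed by $H^1\hookrightarrow L^\infty$). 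The point of keeping $H^1$ rather than $H^3$ is that the net construction below is cleaner and the required bound on the gradient is all that the Poincaré-type estimate uses.

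Next I would discretize in space. Partition $[0,L]$ into $N$ subintervals $I_j=[x_{j-1},x_j]$ of length $h=L/N$, and to each $f\in\mathcal{A}_K$ associate the step function $\bar f$ equal to the average of $f$ on each $I_j$. A Poincaré–Wirtinger inequality on each subinterval gives $\|f-\bar f\|_{L^2(I_j)}^2 \le \tfrac{h^2}{\pi^2}\|f'\|_{L^2(I_j)}^2$, hence summing, $\|f-\bar f\|_{L^2(0,L)} \le \tfrac{h}{\pi}\|f'\|_{L^2(0,L)} \le \tfrac{L}{\pi N}C_1(L)K$. Choosing $N$ large enough (explicitly, $N > \tfrac{2L C_1(L)K}{\pi}\cdot\tfrac{2}{\sqrt2}$, i.e. $N>\tfrac{2\sqrt2 L C_1(L)K}{\pi}$) makes this first error at most $\tfrac{\sqrt2}{4}$. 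Now $\bar f$ is determined by the vector of averages $(a_1,\dots,a_N)\in\mathbb{C}^N$, and each $|a_j|\le \|f\|_{L^\infty}\le C_\infty(L)K$, so these vectors lie in a fixed ball of $\mathbb{C}^N$. Cover that ball by finitely many Euclidean balls of radius $\rho$; the number $B$ of them is bounded by $\big(1+\tfrac{2C_\infty(L)K}{\rho}\big)^{2N}$ (real dimension $2N$), which is an explicit function of $L$ and $K$. Two step functions whose coefficient vectors are within $\rho$ differ in $L^2$ by at most $\sqrt{h}\,\rho\cdot\sqrt{2}$ (accounting for real/imaginary parts), so picking $\rho$ with $\sqrt{2h}\,\rho \le \tfrac{\sqrt2}{4}$, i.e. $\rho\le \tfrac{1}{4\sqrt h}$, controls the second error. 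Taking the centers of these balls, replacing each by a genuine element $f_j\in H^3(0,L)$ within $L^2$-distance $\tfrac{\sqrt2}{4}$ of the corresponding step function (for instance the step function itself smoothed, or simply any nearby $H^3$ function — existence is all we need), and using the triangle inequality $\|f-f_j\|_{L^2} \le \|f-\bar f\| + \|\bar f - \bar f_{\text{center}}\| + \|\bar f_{\text{center}} - f_j\| \le \tfrac{\sqrt2}{4}+\tfrac{\sqrt2}{4}+ 0$... I would instead split the $\tfrac{\sqrt2}{2}$ budget into three equal thirds to be safe, adjusting $N$, $\rho$ accordingly.

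The only mild subtlety — and the step I would be most careful about — is the bookkeeping of the error budget and the requirement that the net points $f_j$ genuinely lie in $H^3(0,L)$ rather than merely in $L^2$: since the lemma only asserts existence of such $f_j$, and step functions are not in $H^3$, I would replace each center by any fixed smooth (hence $H^3$) function within, say, $L^2$-distance $\tfrac{\sqrt2}{6}$ of it, which exists because $C^\infty([0,L])$ is dense in $L^2(0,L)$; no quantitative control on its $H^3$ norm is needed. Everything else is routine: Lemma \ref{lem-sob-emb} supplies the lower-order bounds, the Poincaré inequality on an interval has an explicit constant, and covering numbers of Euclidean balls are explicit. Thus $B=B(L,K)$ can be written down concretely, e.g. $B \le \big(1+ 24\sqrt{h}\,C_\infty(L)K\big)^{2N}$ with $N=\lceil \tfrac{6\sqrt2 L C_1(L)K}{\pi}\rceil$ and $h=L/N$.
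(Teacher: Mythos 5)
Your proof is correct in substance, but it takes a genuinely different route from the paper's. Both arguments begin the same way: they descend from the $H^3$ bound on $\mathcal{A}$ to an effective $H^1$ bound via Lemma~\ref{lem-sob-emb}, and both then exhibit a finite $\tfrac{\sqrt2}{2}$-net by quantizing a finite-dimensional set of parameters. The discretizations, however, are dual to each other. You discretize in \emph{physical space}: chop $[0,L]$ into $N$ subintervals, replace $f$ by its piecewise-constant average $\bar f$, control $\|f-\bar f\|_{L^2}$ by the Poincar\'e--Wirtinger inequality $\|f-\bar f\|_{L^2}\le \tfrac{h}{\pi}\|f'\|_{L^2}$, and then cover the coefficient vector $(a_1,\dots,a_N)$ by finitely many balls. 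The paper instead discretizes in \emph{frequency space}: since $f(0)=f(L)=0$, it expands $f$ in the sine basis, uses the $H^1$ bound to control both the decay $|a_n|\lesssim 1/n$ and the tail $\sum_{n\ge N_c}|a_n|^2$, truncates to the first $N_c-1$ modes, and quantizes each coefficient into $\sim M_c$ levels. The Fourier route has the small advantage that its net points are automatically trigonometric polynomials, hence already in $C^\infty\subset H^3$; with step functions you correctly notice that an extra density step is needed to land in $H^3$, which is harmless since the lemma imposes no $H^3$ bound on the $f_j$. Your approach is in some sense more elementary and more robust (it does not use the Dirichlet boundary conditions to pick an orthonormal basis), and it would transfer readily to settings where a clean spectral basis is unavailable.

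One small inaccuracy worth flagging: the coefficient vectors lie in the $\ell^\infty$ ball of radius $C_\infty K$ in $\mathbb{R}^{2N}$, but the covering-number formula $\big(1+\tfrac{2C_\infty K}{\rho}\big)^{2N}$ is the standard bound for a \emph{Euclidean} ball of that radius, not the larger $\ell^\infty$ ball; the correct count picks up roughly a $\sqrt{2N}$ factor inside. A cleaner fix is to use that averaging is an $L^2$-contraction, so $h\sum_j|a_j|^2=\|\bar f\|_{L^2}^2\le \|f\|_{L^2}^2\le K^2$, giving the Euclidean bound $\|a\|_2\le K/\sqrt h$ directly. Alternatively, cover the $\ell^\infty$ box by a uniform grid, which is what the paper's quantization of sine coefficients amounts to. In either case the covering number remains an explicit function of $L$ and $K$, so the argument is sound; only the advertised formula for $B$ needs adjusting.
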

An immediate consequence if the
\begin{cor}\label{cor:boundon steps}Assume that $\{g_1,g_2,\ldots, g_{P}\}\subset \mathcal{A}$ is orthonormal. Then $P\leq B(L, K)$. 
\end{cor}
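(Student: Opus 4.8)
The plan is to read Lemma~\ref{lem:compact} as a statement that $\mathcal{A}$ admits a finite $\tfrac{\sqrt2}{2}$-net in the $L^2$ metric, and then to observe that an $L^2$-orthonormal family is automatically $\sqrt2$-separated in $L^2$, so it cannot have two members in the same ball of the net. Concretely, I would first record the elementary identity for distinct members $g_i,g_k$ of the orthonormal system, using that the inner product is the $L^2(0,L)$ one:
\[
\|g_i-g_k\|_{L^2(0,L)}^2=\|g_i\|_{L^2(0,L)}^2+\|g_k\|_{L^2(0,L)}^2-2\,\Re\langle g_i,g_k\rangle_{L^2(0,L)}=2,
\]
hence $\|g_i-g_k\|_{L^2(0,L)}=\sqrt2$ for all $i\neq k$.

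Next, since $\{g_1,\dots,g_P\}\subset\mathcal{A}$, I would apply Lemma~\ref{lem:compact} to each $g_i$: there is an index map $j\colon\{1,\dots,P\}\to\{1,\dots,B\}$, $B=B(L,K)$, with $\|g_i-f_{j(i)}\|_{L^2(0,L)}<\tfrac{\sqrt2}{2}$. Then I would show $j$ is injective by a one-line triangle-inequality argument: if $j(i)=j(k)$ with $i\neq k$, then
\[
\|g_i-g_k\|_{L^2(0,L)}\le\|g_i-f_{j(i)}\|_{L^2(0,L)}+\|f_{j(k)}-g_k\|_{L^2(0,L)}<\frac{\sqrt2}{2}+\frac{\sqrt2}{2}=\sqrt2,
\]
contradicting the previous step. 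An injection from a set of size $P$ into a set of size $B$ forces $P\le B(L,K)$, which is the claim, with the same effectively computable constant $B(L,K)$ produced by Lemma~\ref{lem:compact}.

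I do not expect a genuine obstacle here; this is a standard volume/packing comparison. The only point worth a word of care is the mismatch of norms — the orthonormality lives in $L^2$ while $\mathcal{A}$ and the net $\{f_1,\dots,f_B\}$ live in $H^3$ — but this is harmless because Lemma~\ref{lem:compact} already measures its approximation in the $L^2$ norm, so the net radius $\tfrac{\sqrt2}{2}$ and the separation $\sqrt2$ are compared in the same metric. (If one preferred, the choice of radius $\tfrac{\sqrt2}{2}$ in Lemma~\ref{lem:compact} is precisely what makes $2\cdot\tfrac{\sqrt2}{2}=\sqrt2$ the critical threshold, so no slack is needed.)
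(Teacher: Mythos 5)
Your argument is correct and is essentially the same as the paper's: both use Lemma~\ref{lem:compact} to map each $g_i$ to a net point within $L^2$-distance $\tfrac{\sqrt2}{2}$, then note that orthonormality forces $\|g_i-g_j\|_{L^2}=\sqrt2$ so the map must be injective (the paper phrases it as a pigeonhole contradiction rather than injectivity, but that is the same step), giving $P\le B(L,K)$.
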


\section{Proofs of Proposition \ref{prop:1} and Proposition~\ref{prop:2}}\label{sec-main-proof}
This section is devoted to the proof  of the  two main propositions of this paper.
In the following we will work with a parameter $K$ much bigger than 8, which will eventually determine $K_1 = K_1(L, K)= B^{\frac12}(L, K)K$. For ease of notations,  from now on,  let 
\begin{gather}
\textrm{$\|\cdot\|$  refers to the $L^2$-norm, and $\langle\cdot\rangle$ refers to the $L^2$-inner product}; \notag\\
\textrm{$\Pi_{\{y_1,...,y_n\}}^\perp$   refers to $\Pi_{\{\textrm{span}\{y_1,...,y_n\}\}^{\perp}}$} ;  \notag\\
\textrm{$f= \mathcal{O}(a)$ if $|f|\leq |a|$,  $g= \mathcal{O}_{H}(a)$ if $\|g\|_H\leq |a|$,  $etc$}. \notag
\end{gather}

First, we prove Proposition~\ref{prop:1}, following the procedure in Rosier's proof.  Observe that this proposition is ineffectively true, since if it's false, we can find a sequence $\gamma_n\rightarrow 0$ as well as functions $u_n$ with associated $\lambda_n$ as in the definition of $\mathcal{B}_{\gamma}$ with $\|u_n\|_{H^3}\leq K_1$, and so in particular a subsequence will have $\lambda_n\rightarrow\lambda_*$ as well as $u_n, u_{n,x}$ converge point wise, and also  in $H^3$ weak sense, to some $u_*$ which is as in Rosier's result, and hence results in a contradiction. 
Rendering this effective will require `perturbing Rosier's proof'. 
\begin{proof}[Proof of Proposition \ref{prop:1}]
Assume that $L\notin \mathcal{N}$ and let $u\in \mathcal{B}_{\gamma}$ as in the statement of that proposition, thus there exist $\lambda$ and $f(x)$ such that
\begin{equation}\label{lambdauf}
\lambda u(x)+ u'(x)+u'''(x)= f(x), x\in (0, L), 
\end{equation}
\begin{equation}\label{lambdaufcond}
\|u\|_{L^2}=1, \|u\|_{H^3}\leq K_1, u(0) = u(L) = u_x(L) = 0, |u'(0)|<\gamma, \|f\|_{L^2}<\gamma.
\end{equation}

At first we can get some information about $\lambda$ from the above equation  on $u$. In fact, we get from the preceding equation that 
\begin{equation}
|\lambda|\|u\|_{L^2}- \left(1+\sqrt{E^1_3}\right)\|u\|_{H^3}\leq |\lambda|\|u\|_{L^2}- \|Au\|_{L^2}\leq \|\lambda u+u'+u'''\|_{L^2}< \gamma, \notag
\end{equation}
thus $\lambda$ is bounded by
\begin{equation}
|\lambda|< \gamma+ \left(1+\sqrt{E^1_3}\right) K_1< 1+ \left(1+\sqrt{E^1_3}\right) K_1=: K_2.\notag
\end{equation}
Moreover, direct integration by parts from equation \eqref{lambdauf} yields
\begin{equation}
\lambda \langle u, u\rangle= \langle -u'-u'''+f, u\rangle,  \notag
\end{equation}
which, combined with \eqref{lambdaufcond}, leads to
\begin{equation}
\textrm{Re}(\langle u', u\rangle)=0, \; \textrm{Re}(\langle u''', u\rangle)=\frac{1}{2}|u'|^2(0).  \notag
\end{equation}
Therefore, $\lambda$ is close to the imaginary axis,
\begin{equation}
|\textrm{Re}(\lambda)|\leq 2\gamma.    \notag
\end{equation}

Then we derive further information on $u$ from classical complex analysis.  By extending $u$ and $f$ trivially past the endpoints of the interval $[0,L]$, we obtain a function $u\in H^{\frac32-}(\R)$, which satisfies the relation 
\[
\lambda u+ u'+u'''= f+ u''(0)\delta_0+ u'(0)\delta'_0-  u''(L)\delta_L, x\in \mathbb{R}.
\]
Then, the extended function $u$, via the Fourier(--Laplace) transformation, further satisfies
\[
\hat{u}(\xi)\cdot\big(\lambda + (i\xi) + (i\xi)^3\big) = \alpha - \beta e^{-iL\xi} + \delta i\xi + \hat{f}(\xi),
\]
\[ |\delta|<\gamma, \; |\hat{f}(\xi)|<\gamma L^{1/2} e^{L |\textrm{im}\xi| }, \forall \xi \in \mathbb{C},
\]
where $\alpha:= u''(0), \beta:= u''(L), \delta:= u'(0)$.
It is followed from  Paley-Wiener theorem  that $\hat{u}(\xi)$ and $\hat{f}(\xi)$ are holomorphic  functions when extended on complex valued $\xi$, as $u(x)$ and $f(x)$ are compactly supported.   

We conclude that away from the zeroes of the polynomial $\lambda + (i\xi) + (i\xi)^3$, we have the representation 
\[
\hat{u}(\xi) = i\frac{\alpha - \beta e^{-iL\xi} + \delta i\xi + \hat{f}(\xi)}{p - \xi + \xi^3},\,p = i\lambda. 
\]
Then observe that $(\alpha, \beta)\neq (0,0)$ since otherwise we cannot possibly have the normalisation condition $\|u\|_{L^2} = 1$ (or $\|\hat{u}\|_{L^2}= 2\pi$), provided $\gamma$ is small enough. In fact, if the function 
\[
i\frac{\delta i\xi + \hat{f}(\xi)}{p - \xi + \xi^3}\in L^2(\R), 
\]
then the polynomial $p - \xi + \xi^3$ has to divide the numerator, in the sense that the quotient is an entire function as well. But since $|p|= |\lambda|<K_2$, the roots of this polynomial lie in a disc of radius $R= R(K_1): = \big(1+3K_2/2\big)^{1/3}>1$ in the complex plane centered at the origin:
\begin{equation}
|\xi|^3=|\xi-p|< K_2+ |\xi|<K_2+ \frac{1}{3}|\xi|^3+\frac{2}{3}.   \notag
\end{equation} 

Choose $\eta\in D_{2R(K_1)}$ and $\Gamma_{3R} = \partial D_{3R(K_1)}$, we get from Cauchy's integral formula that
\begin{align*}
|\hat{u}(\eta)|&=\Big|i\frac{\delta i\eta + \hat{f}(\eta)}{(p - \eta + \eta^3)}\Big|\\
&=\Big|\frac{1}{2\pi i}\int_{\Gamma_{3R}}i\frac{\delta i\zeta + \hat{f}(\zeta)}{(p - \zeta + \zeta^3)(\zeta - \eta) }\,d\zeta\Big|.
\end{align*}
On the one hand, since
\begin{equation}
|\hat{f}(\zeta)|\leq e^{3LR}\int_0^L |f(x)|\,dx< e^{3LR} L^{1/2} \gamma, \forall \zeta\in D_{3R},   \notag
\end{equation}
\begin{equation}
|(p - \zeta + \zeta^3)(\zeta - \eta)|\geq (26K_2+\frac{52}{3})R= \frac{52}{3} R^4, \forall \zeta\in \partial D_{3R}, \forall \eta\in D_{2R},   \notag
\end{equation}
we have
\begin{align}
|\hat{u}(\eta)|&\leq \frac{1}{2\pi}\Big|\int_{\Gamma_{3R}}\frac{\delta i\zeta}{(p - \zeta + \zeta^3)(\zeta - \eta) }\,d\zeta\Big|+  \frac{1}{2\pi}\Big|\int_{\Gamma_{3R}}\frac{\hat{f}(\zeta)}{(p - \zeta + \zeta^3)(\zeta - \eta) }\,d\zeta\Big|,  \notag\\
&\leq \left(\frac{27}{52 R^2} +\frac{9L^{1/2}e^{3LR}}{52R^3}\right)\gamma. \label{etasmall}
\end{align}
On the other hand,   for $\forall \xi\in \big( D_{2R}\big)^c$ we have
\begin{equation}
\lvert p-\xi-\xi^3\lvert\geq \frac{2}{3}\lvert \xi\lvert^3-\frac{1}{3}\lvert \xi\lvert\geq \frac{16}{3}R^3-\frac{2}{3}R>\frac{14}{3}R^3,    \notag
\end{equation}
\begin{equation}
\frac{\lvert \xi\lvert}{\lvert p-\xi-\xi^3\lvert}\leq \frac{\lvert \xi\lvert}{\frac{2}{3}\lvert \xi\lvert^3-\frac{1}{3}\lvert \xi\lvert}< \frac{12}{7\lvert \xi\lvert^2},    \notag
\end{equation}
thus
\begin{equation}
|\hat{u}(\xi)|\leq  \frac{12\gamma}{7\lvert \xi\lvert^2}+ \frac{3}{14}\big|\hat{f}(\xi)\big|, \;\forall \xi\in \big( D_{2R}\big)^c,   \notag
\end{equation}
which, together with \eqref{etasmall}, yields
\begin{align*}
\int_{\mathbb{R}} |\hat{u}(\xi)|^2 d\xi&= \int_{\xi\in [-2R, 2R]} |\hat{u}(\xi)|^2 d\xi+ \int_{\xi\in [-2R, 2R]^c} |\hat{u}(\xi)|^2 d\xi, \\
&\leq 4R\left(\frac{27}{52 R^2} +\frac{9L^{1/2}e^{3LR}}{52R^3}\right)^2\gamma^2+\frac{24}{49}\gamma^2   +\frac{9}{98}\gamma^2,\\
&\leq \left(4R\left(\frac{27}{52 R^2} +\frac{9L^{1/2}e^{3LR}}{52R^3}\right)^2+ \frac{57}{98}\right)\gamma^2.
\end{align*}
This contradicts our assumption that $\|u\|_{L^2}=1$, provided $\gamma$ small enough:
\begin{equation}
\left(4R\left(\frac{27}{52 R^2} +\frac{9L^{1/2}e^{3LR}}{52R^3}\right)^2+ \frac{57}{98}\right)\gamma^2< 2\pi.    \notag
\end{equation}
 Furthermore, by a simple variation of the preceding argument, we infer the existence of $\alpha_*(L, K_1)>0$ such that 
\[
|\alpha| + |\beta|\geq \alpha_*(L, K_1)
\]
is forced by the normalisation condition on $u$. Indeed,  based on the above estimates with $(\alpha, \beta)=(0, 0)$, when $(\alpha, \beta)\neq(0, 0)$, for $\eta\in D_{2R}$ we have
\begin{align*}
|\hat{u}(\eta)|&=\Big|i\frac{\alpha-\beta e^{-iL\eta}+ \delta i\eta + \hat{f}(\eta)}{(p - \eta + \eta^3)}\Big|\\
&=\Big|\frac{1}{2\pi}\int_{\Gamma_{3R}}\frac{\alpha-\beta e^{-iL\zeta}+ \delta i\zeta + \hat{f}(\zeta)}{(p - \zeta + \zeta^3)(\zeta - \eta) }\,d\zeta\Big|, \\
&\leq  \left(\frac{27}{52 R^2} +\frac{9L^{1/2}e^{3LR}}{52R^3}\right)\gamma+ \frac{9}{52R^3}\Big(|\alpha|+ e^{3LR}|\beta|\Big),
\end{align*}
and for $\xi\in (D_{2R})^c\cap \mathbb{R}$ we have
\begin{equation}
|\hat{u}(\xi)|\leq  \frac{12\gamma}{7\lvert \xi\lvert^2}+ \frac{3}{14}\big|\hat{f}(\xi)\big|+ \frac{12(|\alpha|+|\beta|)}{7\lvert \xi\lvert^3},    \notag
\end{equation}
which imply that
\begin{align*}
\int_{\mathbb{R}} |\hat{u}(\xi)|^2 d\xi&= \int_{\xi\in [-2R, 2R]} |\hat{u}(\xi)|^2 d\xi+ \int_{\xi\in [-2R, 2R]^c} |\hat{u}(\xi)|^2 d\xi, \\
&\leq 8R\left(\frac{27}{52 R^2} +\frac{9L^{1/2}e^{3LR}}{52R^3}\right)^2\gamma^2+ 8R \left(\frac{9}{52R^3}\Big(|\alpha|+ e^{3LR}|\beta|\Big)\right)^2,\\
&\;\; + \frac{36}{49}\gamma^2   +\frac{27}{196}\gamma^2+ \frac{27(|\alpha|+|\beta|)^2}{245},\\
&\leq \left(8R\left(\frac{27}{52 R^2} +\frac{9L^{1/2}e^{3LR}}{52R^3}\right)^2+\frac{171}{196}\right)\gamma^2,\\
&\;\; +\left(\frac{81e^{6LR}}{338R^5}+\frac{27}{245}\right)(|\alpha|+|\beta|)^2.
\end{align*}
Thus 
\begin{gather}
\left(\frac{81e^{6LR}}{169R^5}+\frac{54}{245}\right)(|\alpha|+|\beta|)^2\geq 1,   \notag\\
 \alpha_{*}= \left(\frac{81e^{6LR}}{169R^5}+\frac{54}{245}\right)^{-1/2},  \notag
\end{gather}
provided that
\begin{equation}
 \left(8R\left(\frac{27}{52 R^2} +\frac{9L^{1/2}e^{3LR}}{52R^3}\right)^2+\frac{171}{196}\right)\gamma^2\leq 2\pi-1.    \notag
\end{equation}

Moreover, by shrinking $\gamma_0(L, K_1)$ if necessary this can be easily improved to 
\begin{equation}
|\beta|/2\leq |\alpha|\leq 2|\beta| \textrm{ and }\min\{|\alpha| ,  |\beta|\}\geq \frac{1}{3}\alpha_*(L, K_1). \notag
\end{equation}
Since else we can arrange the numerator not to have any zeroes at all on or near the real axis, while as shown in the beginning that $\lambda= i\R + \calO(2\gamma)$, whence there exists at least one root of the denominator that is near the real axis for small $\gamma$. 
 More precisely,  we only need to show the former relation as it leads to the latter one, if which is not true,   then either
\[ |\alpha|<|\beta|/2 \textrm{ with  }|\beta|\geq 2/3\alpha_{*},  \]
or
\[|\alpha|>2|\beta|\textrm{ with }|\alpha|\geq 2/3\alpha_{*}. \]
For  the first case, the zeroes of the numerator that lie  in $D_R$ satisfy
\begin{equation}
\beta e^{-iL\xi}= \alpha  + \delta i\xi + \hat{f}(\xi),
\end{equation}
thus
\begin{equation}
|\beta|e^{L \textrm{Im}(\xi)}\leq |\beta| |e^{-iL\xi}|\leq \frac{|\beta|}{2}+\gamma R+ \gamma L^{1/2} e^{LR},    \notag
\end{equation}
therefore,
\begin{equation}
\textrm{Im}(\xi)\leq \frac{1}{L}\ln(\frac{3}{4})   \notag
\end{equation}
provided that $\gamma$ satisfies
\begin{equation}
\gamma\left(R+ L^{1/2} e^{LR}\right)\leq \frac{\alpha_*}{6}.    \notag
\end{equation}
While for the latter case, 
\begin{equation}
-\alpha= -\beta e^{-iL\xi}  + \delta i\xi + \hat{f}(\xi),   \notag
\end{equation}
\begin{equation}
 |\alpha|\leq \frac{|\alpha|}{2}e^{L \textrm{Im}(\xi)}+\gamma R+ \gamma L^{1/2} e^{LR},   \notag
\end{equation}
therefore
\begin{equation}
\textrm{Im}(\xi)\geq \frac{1}{L}\ln(\frac{3}{2})   \notag
\end{equation}
provided that $\gamma$  satisfies
\begin{equation}
\gamma\left(R+ L^{1/2} e^{LR}\right)\leq \frac{\alpha_*}{6}.      \notag
\end{equation}
Hence, the zero $\xi$ in $D_R$, which exists as $\hat{u}$ is holomorphic,  should verify $L|\textrm{Im}(\xi)|\geq \ln(\frac{4}{3})$.

On the other hand, we turn to  the zeroes of the denominator, which all lie in $D_R$, 
\[p-\xi+\xi^3=0,\]
where $p\in \R+ i\calO(2\gamma), \xi\in D_R$. Suppose that $p$ is given by $a+ib$ with some  $|a|<K_2$ and  $|b|<2\gamma$, we can find some $\xi_0\in D_R\cap \R$ as  solution of $a-\xi_0+\xi_0^3=0$. Therefore, there exists a solution $\xi=\xi_0+r$ of $a+ib-\xi+\xi^3=0$ with $|r|<3\gamma$, thus $|\textrm{Im}(\xi)|<3\gamma$, which is in contradiction with $L|\textrm{Im}(\xi)|\geq \ln(\frac{4}{3})$ if $\gamma$ verifies $3\gamma L\leq \ln(\frac{4}{3})$.
\\

Consider then the numerator $\alpha - \beta e^{-iL\xi} + \delta i\xi + \hat{f}(\xi)$, we can then assume that all the roots of $\alpha - \beta e^{-iL\xi} + \delta i\xi + \hat{f}(\xi)$ in $D_R$ are simple, and have to be of distance $O_{K_1,L}(\gamma)$ from the roots of 
\[
\alpha - \beta e^{-iL\xi},
\]
which, thanks to the fact that $|\beta|/2\leq |\alpha|\leq 2|\beta|$,  are of the form 
\begin{equation}
\mu_0+ \frac{2\pi n}{L},\textrm{ with $|$Re}(\mu_0)|\leq \frac{\pi}{L}, |\textrm{Im}(\mu_0)|\leq \frac{\ln 2}{L},\textrm{ and }n\in \mathbb{Z}.
\end{equation}

In fact, as
\begin{equation}
\left(\alpha - \beta e^{-iL\xi} + \delta i\xi + \hat{f}(\xi)\right)'=iL\beta e^{-iL\xi}+i \delta-i\hat{(xf)}(\xi)     \notag
\end{equation}
if there exists some double  solution $\xi$ in $D_R$, then
\begin{equation}
\frac{\alpha_*L}{3} e^{-LR}\leq|iL\beta e^{-iL\xi}|\leq \left(1+ \left(\frac{L^3}{3}\right)^{1/2}e^{LR}\right)\gamma,     \notag
\end{equation}
contradiction when
\begin{equation}
\left(1+ \left(\frac{L^3}{3}\right)^{1/2}e^{LR}\right)\gamma< \frac{\alpha_* L}{3} e^{-LR}.     \notag
\end{equation}
Furthermore, if $\mu$ is such a zero of $\alpha - \beta e^{-iL\xi}$ that is in $D_R$,   we pick a circle $\Gamma_r(\mu)$ in the complex plane centred at $\mu$ of radius $r\in(0, \min\{\frac{\pi}{8L}, R\})= (0, \frac{\pi}{8L})$. We prove that under certain conditions, which will be chosen later on,  there is  only  one solution of $\alpha - \beta e^{-iL\xi} + \delta i\xi + \hat{f}(\xi)$ that lies in the domain $\big[-\frac{\pi}{L}+ \textrm{Re}(\mu), \frac{\pi}{L}+ \textrm{Re}(\mu)\big)\times \mathbb{R}$, actually this solution  is inside $\Gamma_r(\mu)$.

At first for any $\xi\in \Gamma_r(\mu)$ we have
\begin{align*}
\big|\alpha - \beta e^{-iL\xi}\big|^2&= \big|(\alpha - \beta e^{-iL\xi})-(\alpha - \beta e^{-iL\mu})\big|^2 \\
&=\big|\beta e^{-iL\mu}\left(e^{-iL\xi_r}-1\right)\big|^2, \\
&= |\alpha|^2  \left((\cos(Lra)e^{Lrb}-1)^2+(\sin(Lra)e^{Lrb})^2\right), 
\end{align*}
where $\xi_r=\xi-\mu= r(a+ib)$ with $a^2+b^2=1$. 
\\
If $|a|\geq\frac{1}{8}$, then $(\sin(Lra)e^{Lrb})^2\geq \left(e^{-Lr}\frac{Lr}{16}\right)^2\geq\left(\frac{Lr}{48}\right)^2$. 
\\
If $|a|\leq \frac{1}{8}$, then $|b|\geq \frac{7}{8}$. If further $b<0$, then $(\cos(Lra)e^{Lrb}-1)^2\geq \left(1-e^{-\frac{7Lr}{8}}\right)^2\geq \left(\frac{Lr}{48}\right)^2$. Else $b>0$, thus $(\cos(Lra)e^{Lrb}-1)^2\geq \left(\frac{1}{2}Lr\right)^2$.\\
Therefore, 
\begin{equation}
\big|\alpha - \beta e^{-iL\xi}\big|\geq \frac{|\alpha| Lr}{48}\geq \frac{\alpha_* Lr}{144}, \forall \xi\in \Gamma_r(\mu),     \notag
\end{equation}
which yields 
\begin{equation}
\big|\alpha - \beta e^{-iL\xi}+ \delta i\xi + \hat{f}(\xi)\big|\geq \frac{\alpha_* Lr}{288}, \forall \xi\in \Gamma_r(\mu),     \notag
\end{equation}
if
\begin{equation}
\gamma\left(R+ L^{1/2}e^{LR}\right)\leq \frac{\alpha_* Lr}{288}.     \notag
\end{equation}

Moreover, under the above condition,  there is no solution in $\big[-\frac{\pi}{L}+ \textrm{Re}(\mu), \frac{\pi}{L}+ \textrm{Re}(\mu)\big)\times \mathbb{R}\setminus D_r(\mu)$. Indeed, for any $\xi\in D_R\cap \big[-\frac{\pi}{L}+ \textrm{Re}(\mu), \frac{\pi}{L}+ \textrm{Re}(\mu)\big)\times \mathbb{R}$, we estimate $\alpha - \beta e^{-iL\xi}$ by two situations: \\
if $|\textrm{Im}(\xi-\mu)|\geq r/2$, then 
\begin{equation}
\big|\alpha - \beta e^{-iL\xi}\big|=|\alpha|\big|e^{-iL(\xi-\mu)}-1\big|\geq \frac{\alpha_*}{3} |e^{L\textrm{Im}(\xi-\mu)}-1|\geq \frac{\alpha_*Lr}{48}; \notag
\end{equation}
if $|\textrm{Re}(\xi-\mu)|\geq r/2$ and $|\textrm{Im}(\xi-\mu)|\leq r/2$, then 
\begin{equation}
\big|\alpha - \beta e^{-iL\xi}\big|=|\alpha|\big|e^{-iL(\xi-\mu)}-1\big|\geq |\alpha|e^{L\textrm{Im}(\xi-\mu)}|  \sin(\textrm{Re}(\xi-\mu)L)\geq \frac{\alpha_*Lr}{24}.\notag
\end{equation}

Next we prove that, shrinking the upper bound on $\gamma$ if necessary, there is exactly one solution inside $\Gamma_r(\mu)$.
As demonstrated before, there is no solution on  $\Gamma_r(\mu)$, therefore the number of solutions (counting multiplicity) inside which is given by 
\begin{equation}
\frac{1}{2\pi i} \int_{\Gamma_r(\mu)} \frac{\left(\alpha - \beta e^{-iL\xi} + \delta i\xi + \hat{f}(\xi)\right)'}{\alpha - \beta e^{-iL\xi} + \delta i\xi + \hat{f}(\xi)} d\xi= \frac{1}{2\pi i} \int_{\Gamma_r(\mu)} \frac{iL\beta e^{-iL\xi}+i \delta-i\hat{(xf)}(\xi)}{\alpha - \beta e^{-iL\xi} + \delta i\xi + \hat{f}(\xi)} d\xi.\notag
\end{equation}
As $\mu$ is the only solution of $\alpha - \beta e^{-iL\xi} =0$ inside $\Gamma_r(\mu)$, we also have 
\begin{equation}
1= \frac{1}{2\pi i} \int_{\Gamma_r(\mu)} \frac{\left(\alpha - \beta e^{-iL\xi}\right)'}{\alpha - \beta e^{-iL\xi}}= \frac{1}{2\pi i} \int_{\Gamma_r(\mu)} \frac{iL\beta e^{-iL\xi}}{\alpha - \beta e^{-iL\xi}} d\xi.\notag
\end{equation}
It suffices to find a sufficient condition such  that 
\begin{align*}
N_r:&=\Big|\frac{1}{2\pi i} \int_{\Gamma_r(\mu)} \frac{iL\beta e^{-iL\xi}+i \delta-i\hat{(xf)}(\xi)}{\alpha - \beta e^{-iL\xi} + \delta i\xi + \hat{f}(\xi)} d\xi-  \frac{1}{2\pi i} \int_{\Gamma_r(\mu)} \frac{iL\beta e^{-iL\xi}}{\alpha - \beta e^{-iL\xi}} d\xi\Big|, \\
&=\Big|\frac{1}{2\pi i} \int_{\Gamma_r(\mu)} -\frac{(iL\beta e^{-iL\xi}) (\delta i\xi + \hat{f}(\xi)) }{(\alpha - \beta e^{-iL\xi} + \delta i\xi + \hat{f}(\xi))(\alpha - \beta e^{-iL\xi})}+ \frac{i\delta-i\hat{(xf)}(\xi)}{\alpha - \beta e^{-iL\xi} + \delta i\xi + \hat{f}(\xi)} d\xi \Big|,
\end{align*}
is strictly smaller than 1, since $N_r$ takes value from integer. 
In fact, under the above conditions, thanks to the above known estimates, we have
\begin{align*}
N_r&\leq \frac{1}{2\pi} \int_{\Gamma_r(\mu)}\Big| \frac{(iL\beta e^{-iL\xi}) (\delta i\xi + \hat{f}(\xi)) }{(\alpha - \beta e^{-iL\xi} + \delta i\xi + \hat{f}(\xi))(\alpha - \beta e^{-iL\xi})}\Big| d\xi+   \frac{1}{2\pi} \int_{\Gamma_r(\mu)}\Big|\frac{i\delta-i\hat{(xf)}(\xi)}{\alpha - \beta e^{-iL\xi} + \delta i\xi + \hat{f}(\xi)} \Big| d\xi,\\
&\leq r\Big(\frac{(L\beta e^{LR})\gamma(R+L^{1/2}e^{LR})}{\left(\frac{\alpha Lr}{48}\right)\left(\frac{\alpha_* Lr}{288}\right)} + \frac{\left(1+ \left(\frac{L^3}{3}\right)^{1/2}e^{LR}\right)\gamma}{\frac{\alpha_* Lr}{288}}\Big),\\
&\leq \frac{\gamma}{r} \cdot\frac{288\cdot48 \beta e^{LR}(R+L^{1/2}e^{LR})}{\alpha_* \alpha L}+ \gamma \cdot\frac{288\left(1+ \left(\frac{L^3}{3}\right)^{1/2}e^{LR}\right)}{\alpha_* L}, \\
&\leq 288\gamma\Big( \frac{96e^{LR}(R+L^{1/2}e^{LR})}{\alpha_* L r}+ \frac{1+ \left(\frac{L^3}{3}\right)^{1/2}e^{LR}}{\alpha_* L} \Big).
\end{align*}
Thus it suffices to let $\gamma$ satisfy
\begin{equation}
288\gamma\Big( \frac{96e^{LR}(R+L^{1/2}e^{LR})}{\alpha_* L r}+ \frac{1+ \left(\frac{L^3}{3}\right)^{1/2}e^{LR}}{\alpha_* L} \Big)<1.   \notag
\end{equation}
\\

We conclude that all the zeroes of the numerator $\alpha - \beta e^{-iL\zeta} + \delta i\zeta+ \hat{f}(\zeta)$ in $D_R$ are of the form 
\[
\mu_0 + k\frac{2\pi}{L} + \calO(r),\,k\in \Z, 
\]
where $\mu_0$ satisfies $|\mu_0|\leq \frac{2\pi}{L}$.   Because  all the zeroes of the denominator, which are  in $D_R$,  should also be solutions of the numerator, amongst those solutions have to be the roots of the polynomial $p - \xi + \xi^3$. Picking $\xi_0$ suitably, we may assume that the roots of this polynomial in $D_R$ are then of the form 
\[
\xi_0,\,\xi_1=\xi_0 + k\frac{2\pi}{L} +  2\calO(r),\,\xi_2=\xi_0 +(k+l)\frac{2\pi}{L} +  2\calO(r)\]
where $k, l$ are positive integers. Observe that necessarily we have 
\[
|\xi_i|\leq R, \;|(k+l)\frac{2\pi}{L}|\leq 2R+1/4.
\]

 Then one infers the system 
\begin{align*}
\xi_0 + \xi_1 + \xi_2 = 0,\,\xi_0\xi_1 + \xi_0\xi_2 + \xi_1\xi_2 = -1,\,\xi_0\xi_1\xi_2 = -p,
\end{align*}
and the first two of these equations yield
\begin{equation}
3\xi_0+\frac{2\pi}{L}(2k+l)+4\calO(r)=0,     \notag
\end{equation}
\begin{equation}
3=(\frac{2\pi}{L})^2(k^2+kl+l^2)+ \frac{2\pi}{L}(28k+10l)\calO(r)+ 36\calO(r^2).     \notag
\end{equation}
Thus
\begin{equation}
\big|L^2-\left(2\pi\sqrt{\frac{k^2 + l^2 + kl}{3}}\right)^2\big|\leq 56 L^2(R+1/8)r+ 36L^2r^2\leq 56L^2(R+1)r.     \notag
\end{equation}
In particular, if $56L^2(R+1)r< \min_{k,l\in \mathbf{N}}\big|L^2-\left(2\pi\sqrt{\frac{k^2 + l^2 + kl}{3}}\right)^2\big|$,  then we have  $\mathcal{B}_{\gamma} = \emptyset$.  Let us remark here that the existence of such $r$ satisfying the preceding condition is guaranteed by the selection of $L$.
\\

In conclusion, we can set  $\gamma=\gamma(L, K_1)$ that satisfies, 
\begin{equation}
K_2= 1+ \left(1+\sqrt{E^1_3}\right) K_1,\; R= \big(1+3K_2/2\big)^{1/3}, \;\alpha_{*}= \left(\frac{81e^{6LR}}{169R^5}+\frac{54}{245}\right)^{-1/2}, \notag
\end{equation}
\begin{equation}
r<\frac{\pi}{8L}, \;56L^2(R+1)r< \min_{k,l\in \mathbf{N}}\big|L^2-\left(2\pi\sqrt{\frac{k^2 + l^2 + kl}{3}}\right)^2\big|, \notag
\end{equation}
\begin{equation}
 \left(8R\left(\frac{27}{52 R^2} +\frac{9L^{1/2}e^{3LR}}{52R^3}\right)^2+\frac{171}{196}\right)\gamma^2\leq 2\pi-1,\notag
\end{equation}
\begin{equation}
\gamma\left(R+ L^{1/2} e^{LR}\right)\leq \frac{\alpha_*}{6},\; \gamma\left(1+ \left(\frac{L^3}{3}\right)^{1/2}e^{LR}\right)< \frac{\alpha_* L}{3} e^{-LR},\notag
\end{equation}
\begin{equation}
\gamma\left(R+ L^{1/2}e^{LR}\right)\leq \frac{\alpha_* Lr}{288}, \;288\gamma\Big( \frac{96e^{LR}(R+L^{1/2}e^{LR})}{\alpha_* L r}+ \frac{1+ \left(\frac{L^3}{3}\right)^{1/2}e^{LR}}{\alpha_* L} \Big)<1.\notag
\end{equation}
\end{proof}
\

Now we turn to the the second proposition and begin with outlining the idea of the proof.  
Assume  that there is a $u,\,\|u\|_{L^2} = 1$ as in Proposition~\ref{prop:2} satisfying flux inequality \eqref{flux-cond}. Heuristically,  we shall now construct a finite dimensional vector space $V\subset H^3(0,L)$ of functions satisfying the desired boundary vanishing conditions and such that 
\begin{equation}\label{eq:approx}
\|\Pi_V A f - Af\|<\gamma\|f\|.
\end{equation}
Moreover,  this vector space admits an orthonormal basis in $\mathcal{A}$, such that $\Pi_{\mathbb{C}V}A|_{\mathbb{C}V}$ has a normalised (complex) eigenfunction with $\|u\|_{H^3}\leq B^{\frac12}(L, K)K =:K_1$, and which then implies Proposition~\ref{prop:2}.  More precisely, thanks to Corollary \ref{cor:boundon steps}, suppose that 
\begin{gather}
\{g_1, g_2,..., g_p\}\textrm{ with }p\leq B(L, K)\textrm{ is an orthonormal basis of } V, \label{cond-1}\\
g_j(0)= g_j(L)= (g_j)_x(L)=0, |(g_j)_x(0)|< \frac{\gamma}{\sqrt{B(L, K)}},\label{cond-2}\\
\|g_j\|=1, \;\|g_j\|_{H^3}\leq K,\label{cond-3}\\
A g_j\in V, \;\forall j\in \{1, 2,..., p-1\},\label{cond-4}\\
\|A g_p- \Pi_V A g_p\|< \gamma,\label{cond-5}
\end{gather}
 then the vector space $V$ and the complex vector space $\mathbb{C}V$ satisfies
 \begin{gather}
 \|\Pi_V Af- Af\|< \gamma \|f\|,\;\forall f\in V, \notag\\
  \|\Pi_{\mathbb{C}V} Af- Af\|< \gamma \|f\|,\;\forall f\in \mathbb{C}V,\notag\\
\Pi_{\mathbb{C}V} A:  \mathbb{C}V\longrightarrow \mathbb{C}V.\notag
 \end{gather} 
As $ \mathbb{C}V$ is of finite dimension, the map $\Pi_{\mathbb{C}V} A$ admits at least one eigenvalue:
\begin{gather}
g:= \sum_{j=1}^p a_j g_j\in \mathbb{C}V, \;\Pi_{\mathbb{C}V}Ag= \lambda g, \;\sum_{j=1}^p |a_j|^2=1,\notag
\end{gather}
which further satisfies, 
\begin{gather}
 \|\lambda g- Ag\|= \|\Pi_{\mathbb{C}V} Ag- A g\|< \gamma\|g\|= \gamma,\notag\\
 \|g\|_{H^3}\leq \sum_{j=1}^p |a_j| \|g_j\|_{H^3}\leq B^{\frac{1}{2}}(L, K)K= K_1,\notag\\
g(0)=g(L)=g_x(L)=0,\; |g_x(0)|\leq \sum_{j=1}^p |a_j| |(g_j)_x(0)|< \gamma,\notag
\end{gather}
therefore $g\in \mathcal{B}_{\gamma}$.  

Keeping in mind  the above essential observation, in the following complete proof we will only need to construct orthonormal functions $\{g_j\}$ verifying conditions \eqref{cond-1}--\eqref{cond-5}.
\\
\begin{proof}[Proof of Proposition \ref{prop:2}]
{\bf{Step 0}}:  In the first part, we present some basic properties of the flow, while some of them are based on the ``smallness'' of the flux. The remaining parts of the proof are basically repeating these key observations. \\

\noindent\textit{Observation} $(i)$.   Set $K_0=K_0(L)$ by $\left(2 F^3_s/K_0\right)^{2/3}=1$,  define $\bar{K}_1(L):= K_1(L, K_0)$, and pick $t_1= t_1(K):= \left(2 F^3_s/K\right)^{2/3}\in (0, 1)$ for $K\geq K_0$.   From now on we will work on $K\geq K_0$, which actually will give us a result slightly stronger than Proposition \ref{prop:2}.  As a consequence,  Proposition \ref{prop:2} will be concluded by selecting $K=K_0$.  Thanks to Lemma \ref{thm-flow-smooth} and Lemma \ref{lem-sob-emb},  the flow $S(t)$  satisfies, for $\forall t\in [t_1, +\infty)$,
\begin{align}
|S(t)f\|_{H^3([0,L])}&\leq \frac{F^3_s}{t_1^{3/2}}\|f\|\leq \frac{K}{2}\|f\|,\notag\\
|S(t)f\|_{H^6([0,L])}&\leq \frac{F^6_s}{t_1^3}\|f\|\leq \frac{K^2 F_s^6}{4 (F^3_s)^2}\|f\|, \notag \\
|AS(t)f\|_{L^2(0,L)}&\leq \frac{K(1+\sqrt{E^1_3})\|f\|}{2}< \tilde{K}\|f\|, \notag \\
|AS(t)f\|_{H^3(0,L)}&\leq  \frac{K^2 F_s^6 (1+ \sqrt{E^4_6})\|f\|}{4 (F^3_s)^2}< \tilde{K}\|f\|, \notag \\
|A^2S(t)f\|_{L^2(0,L)}&\leq \left(\frac{K^2 F_s^6 (1+ 2\sqrt{E^4_6})}{4 (F^3_s)^2}+ \frac{K\sqrt{E^2_3}}{2}\right)\|f\|=: \tilde{K}\|f\|. \notag
\end{align}

\noindent\textit{Observation}  $(ii)$. Another important observation is that the  $L^2$ norm of the flow stays close to its initial value, thanks to \eqref{fluxine}:
\begin{equation}
\|f(t)\|=\|f_0\|+ \frac{\calO\left(|\int_0^T f_x^2(t, 0)\,dt|\right)}{\|f_0\|}, \forall t\in [0, T].     \notag
\end{equation}
For instance, if $\|f_0\|=1$ and the flux $|\int_0^T f_x^2(t, 0)\,dt|<a$, then the energy of the flow stays close to 1: $\|S(t)f\|\in [1-a, 1]$.\\

\noindent\textit{Observation} $(iii)$.  Owning to the strong regularity of $S(t)u$, we are able to  estimate $S(s)u-S(t)u$.  More precisely, for any $\delta\in (0, 1/2)$ and any $\forall t\in [t_1, +\infty)$, direct calculation implies,
\begin{gather}
S(t+\delta)f- S(t)f=\int_t^{t+\delta}S'(s)fds= \delta S'(t)f+ \int_t^{t+\delta}\int_t^s S''(r)\, drds, \notag
\end{gather} 
thus
\begin{equation}\label{Stdelta}
\frac{S(t+\delta)f - S(t)f}{\delta}= AS(t)f + \tilde{K}\mathcal{O}_{L^2}(\delta)\|f\|, \forall  t\in [t_1, +\infty).
\end{equation}
\

\noindent\textit{Observation} $(iv)$. Thanks to  the relation \eqref{Stdelta},  we can  estimate the flux of $Af(t)$.  Assuming $|\int_0^T f_x^2(t, 0)\,dt|<a<1/2$ and $\delta\in (0, t_1)$,  we have that,  for any $t\in [t_1, T-t_1-\delta]$, 
\begin{align*}
&\;\;\;\;\;\int_0^{T-t-t_1} \Big(S(s)\big(AS(t)f\big)\Big)_x^2(0)\,ds,\\
&\leq\int_0^{T-t-\delta} \Big(S(s)\big(AS(t)f\big)\Big)_x^2(0)\,ds, \\
&= \int_0^{T-t-\delta} \left(S(s)\left(\frac{S(t+\delta)f - S(t)f}{\delta}- \tilde{K}\mathcal{O}_{L^2}(\delta)\|f\|\right)\right)_x^2(0)\,ds, \\
&= \int_0^{T-t-\delta} \left(\frac{1}{\delta}S(s)\big(S(t+\delta)f\big) - \frac{1}{\delta}S(s)\big(S(t)f\big)- S(s)\left(\tilde{K}\mathcal{O}_{L^2}(\delta)\|f\|\right)\right)_x^2(0)\,ds, \\
&\leq 3\int_0^{T-t-\delta} \left(\frac{1}{\delta}S(s)\big(S(t+\delta)f\big)\right)_x^2(0) + \left(\frac{1}{\delta}S(s)\big(S(t)f\big)\right)_x^2(0)+ \left(S(s)\left(\tilde{K}\mathcal{O}_{L^2}(\delta)\|f\|\right)\right)_x^2(0)\,ds, \\
&\leq 3\tilde{K}^2\delta^2\|f\|^2+ \frac{6}{\delta^2}\int_{t}^{T}\big(S(t')f\big)_x^2(0)\,dt',\\
&\leq 3\tilde{K}^2\delta^2\|f\|^2+ \frac{6a}{\delta^2}.
\end{align*}

\noindent\textit{Observation} $(v)$.  Observe that $Af(t)$ and $f(t)$ are orthogonal, provided  the null flux, $i.e. \int_0^T f_x^2(t, 0)\,dt=0$,
\begin{align*}
\langle Af(t), f(t)\rangle&= \langle -f_x-f_{xxx}, f\rangle=-\frac{f^2_x(t, 0)}{2},\\
\langle Af(t), f(t)\rangle&= \langle \frac{d}{\,dt}f(t), f(t)\rangle= \frac{1}{2}\frac{d}{\,dt} \|f(t)\|^2=0.
\end{align*}
Thus it is natural to have a perturbed version,  $Af(t)$ and $f(t)$ are ``almost'' orthogonal when the flux is small.  Suppose that  $|\int_0^T f_x^2(t, 0)\,dt|<a$,  then for any $ t\in [t_1, T-\delta]$, any $\delta\in (0, 1/2)$, we have
\begin{align*}
&\;\;\;\;\;\langle S(t)f, AS(t)f\rangle,\\
 &=  \left\langle S(t)f,  \frac{S(t+\delta)f - S(t)f}{\delta}- \tilde{K}\mathcal{O}_{L^2}(\delta)\|f\| \right\rangle,\\
&= \tilde{K}\calO(\delta)\|f\|^2 + \left\langle S(t)f,  \frac{S(t+\delta)f - S(t)f}{\delta}\right\rangle,\\
&= \tilde{K}\calO(\delta)\|f\|^2 +\frac{1}{2\delta}\Big(\big\langle S(t)f- S(t+\delta)f,  S(t+\delta)f - S(t)f\big\rangle+ \big\langle  S(t)f+S(t+\delta)f,  S(t+\delta)f - S(t)f\big\rangle\Big),\\
&=  \tilde{K}\calO(\delta)\|f\|^2+\frac{\calO(\delta)}{2} \big\|AS(t)f + \tilde{K}\mathcal{O}_{L^2}(\delta)\|f\| \big\|^2+ \frac{1}{2\delta} \left(\big\|S(t+\delta)f\big\|^2-\big\|S(t)f\big\|^2\right),\\
&= \tilde{K}\calO(\delta)\|f\|^2 + \calO(\delta) \left(\tilde{K}^2+\tilde{K}^2 \delta^2\right)\|f\| ^2+ \frac{\calO(a)}{2\delta}, \\
&=\calO\left(4\delta\tilde{K}^2\|f\|^2+\frac{a}{2\delta}\right)
\end{align*}
which is small provided that $a\ll \delta\ll 1$.
\\

\noindent\textit{Observation} $(vi)$.  If two   small flux flows are orthogonal at the beginning, then they are ``almost'' orthogonal along the flow.  Indeed, suppose that for some $a<1/2$ we have 
\begin{gather}
\int_0^T \big(S(t)f\big)_x^2(0) \,dt<a,\; \int_0^T \big(S(t)g\big)_x^2(0) \,dt<a, \notag
\end{gather}
then direct integration by parts shows
\begin{align*}
\langle S(t)f, S(t)g\rangle- \langle f, g\rangle&=\int_0^t \frac{d}{\,dt}\langle S(s)f, S(s)g\rangle\,ds,\\
&=\int_0^t  \langle AS(s)f, S(s)g\rangle+ \langle S(s)f, AS(s)g\rangle\,ds, \\
&= -\int_0^t\big(S(s)f\big)_x(0)\big(S(s)g\big)_x(0)ds, \\
&= \calO(a), \;\forall t\in [0, T].
\end{align*}

\noindent\textit{Observation} $(vii)$. Let $V$  be a subspace of $L^2$. Then 
\begin{equation}
\|\Pi_{S(t)V}^\perp S(t)f\|\leq \|\Pi_{V}^\perp f\|.     \notag
\end{equation}
Indeed, there exists a $f_1\in V$ such that 
\begin{equation}
f=f_1+g, \;g= \Pi_{V}^\perp f,     \notag
\end{equation}
in view of the linearity of the flow, we have
\begin{equation}
S(t)f= S(t)f_1+ S(t)g.     \notag
\end{equation}
Because  $S(t)f_1\in S(t)V$,  the projection satisfies,
\begin{equation}
\|\Pi_{S(t)V}^\perp S(t)f\|\leq  \|S(t)g\|\leq \|g\|=\|\Pi_{V}^\perp f\|.      \notag
\end{equation}
\begin{remark}
If the flux small condition is replaced by the null flux condition, then all these observations become even better.  
\end{remark}

{\bf{Step 1}}: 
In particular, we have that $u$ and $g_{11}: = S(t_1)u$ satisfy
\begin{gather}
\|g_{11}\|_{H^3}\leq\frac{K}{2},      \notag\\
 \|g_{11}\|_{L^2([0,L])}=1+ \mathcal{O}(\varepsilon),      \notag\\
\frac{S(t_1+\delta)u - S(t_1)u}{\delta} = Ag_{11} +  \tilde{K}\mathcal{O}_{L^2}(\delta), \forall \delta\in (0, 1/2),\label{eq:almostdiff}
\end{gather}
the second inequality on account of the flux assumption on $u$, and  the third is small
for some very small $\delta_1$, which, however, is much larger than $\epsilon$.

If 
\[
\|Ag_{11}\|<\frac{\gamma}{2}, 
\]
then stop.  We further define  $g_{11}(s): = S(t_1+s)u= S(s)g_{11}, s\in [0, t_1]$, which satisfies 
\begin{gather}
\|g_{11}(s)\|_{H^3}\leq\frac{K}{2}, \;\|g_{11}(s)\|_{L^2([0,L])}=1+ \mathcal{O}(\varepsilon),      \notag\\
\|Ag_{11}(s)\|=\|S(s)Ag_{11}\|\leq \|Ag_{11}\|<\frac{\gamma}{2},      \notag\\
\int_0^{t_1} \left(g_{11}(s)\right)_{x}^2(0)ds= \int_{t_1}^{2t_1} (S(t)u)_x^2(0) \,dt\leq \int_{0}^{T} (S(t)u)_x^2(0) \,dt\leq \varepsilon,     \notag
\end{gather}
hence there exists $s$ such that $|g_{11}(s)_x(0)|\leq (\varepsilon/t_1)^{1/2}$. 
Observe that if we set $y_{11}:= \frac{g_{11}(s)}{\|g_{11}(s)\|}$, $V:= \text{span}\{y_1\}$, then
\begin{gather}
\|y_{11}\| = 1,\,\|y_{11}\|_{H^3}\leq K,     \notag\\
y_{11}(0)=y_{11}(L)=(y_{11})_x(L)=0, |(y_{11})_x(0)|< \frac{\gamma}{\sqrt{B(L, K)}},     \notag\\
\|A y_{11}- \Pi_VA y_{11}\|\leq \|A y_{11}\|< \gamma, \;V= \text{span}\{y_{11}\},     \notag
\end{gather}
if $\varepsilon< 1/18$ satisfies
\begin{equation}
\frac{3}{2}\sqrt{\frac{\varepsilon}{t_1}}< \frac{\gamma}{\sqrt{B(L, K)}}.     \notag
\end{equation}
As a consequence,  conditions \eqref{cond-1}--\eqref{cond-5} hold with $p=1, g_1=y_{11}$, which, as it is shown at the beginning, implies that $\mathcal{B}_{\gamma}\neq \emptyset$.

If on the other hand we have 
\[
\|Ag_{11}\|\geq \frac{\gamma}{2}, 
\] 
then proceed to the next step. 
\\

{\bf{Step 2}}: Now we have $\|Ag_{11}\|\geq \frac{\gamma}{2}$.  For ease of  notations, we define the following $L^2$--normalization operator: 
\begin{equation}
\cL: f\longrightarrow \frac{f}{\|f\|}, \;\forall f\neq 0. \notag
\end{equation}
It be can easily checked that $\cL$ verifies 
\begin{equation}
\cL S(t)= \cL S(t) \cL, \;\cL A= \cL A \cL.\notag
\end{equation}
Set 
\begin{equation}
 y_{21} =\cL  A y_{11}= \cL Ag_{11}.     \notag
\end{equation}

First we recall some properties of $y_{11}$ and $y_{21}$, thanks to the observations in \textit{Step} 0:
\begin{gather}
\|y_{11}\|= \|y_{21}\|= 1,\; \sqrt{1-\varepsilon}\leq\|g_{11}\|\leq 1,\; \|Ag_{11}\|\geq \frac{\gamma}{2},\notag\\
|\langle g_{11}, Ag_{11}\rangle|\leq 4\delta\tilde{K}^2+\frac{\varepsilon}{2\delta},\forall \delta\in (0, 1/2), \notag \\
|\langle y_{11}, y_{21}\rangle|\leq   \left(4\delta\tilde{K}^2+\frac{\varepsilon}{2\delta}\right)\frac{4}{\gamma}, \forall \delta\in (0, 1/2),\label{y11-y21}\\
\int_0^{T-t_1}\big(S(s)y_{11}\big)_x^2(0)\,ds= \frac{1}{\|g_{11}\|^2}\int_{t_1}^{T}\big(S(s) u\big)_x^2(0)\,ds\leq 2\varepsilon.\label{y11flux}
\end{gather}
Despite that the normalized function $y_{21}$ may have a poor $H^3$-bound,  its boundary trace at $x = 0$ is small in the sense that, $\forall \delta\in (0, t_1)$,
\begin{gather}
\int_0^{T-2t_1}\big(S(s)Ag_{11}\big)_x^2(0)\,ds= \int_0^{T-2t_1} \Big(S(s)\big(AS(t)u\big)\Big)_x^2(0)\,ds\leq 3\delta^2\tilde{K}^2+ \frac{6\varepsilon}{\delta^2},\notag \\
\int_0^{T-2t_1}\big(S(s)y_{21}\big)_x^2(0)\,ds\leq \frac{1}{\|Ag_{11}\|^2}\int_0^{T-2t_1}\big(S(s)Ag_{11}\big)_x^2(0)\,ds\leq \left(3\delta^2\tilde{K}^2+ \frac{6\varepsilon}{\delta^2}\right)\frac{4}{\gamma^2},\label{y21flux}
\end{gather}
having taken advantage of observation $(iv)$. For the sake of simplicity, we define  an upper bound for  \eqref{y11-y21}, \eqref{y11flux} and \eqref{y21flux}:
\begin{gather}
\mathcal{C}_1=\mathcal{C}_1(\varepsilon, \delta_1, \gamma, \tilde{K}):= \left(\delta_1^2\tilde{K}^2+ \frac{\varepsilon}{\delta_1^2}\right)\frac{24}{\gamma^2}, \forall \delta_1\in (0, \min\{\frac{1}{2}, t_1\}),      \notag
\end{gather}
which can be sufficiently small for  well chosen $\varepsilon$ and $\delta_1$.  To make it clear, $\calC_1$ is an upper bound for 
\begin{equation}
\int_0^{T-2t_1}\big(S(s)y_{11}\big)_x^2(0)\,ds,\; \int_0^{T-2t_1}\big(S(s)y_{21}\big)_x^2(0)\,ds, \; |\langle y_{11}, y_{21}\rangle|.     \notag
\end{equation}

 From the above inequality, we derive the existence of  $\bar{t}_1\in [t_1, 2t_1]$ such that $|(S(\bar{t}_1)y_{11})_x(0)|, |(S(\bar{t}_1)y_{21})_x(0)|\leq (2\calC_1/t_1)^{1/2}$. Set 
 \begin{equation}
 g_{12}:= S(\bar{t}_1)y_{11},\; g_{22}:= S(\bar{t}_1) y_{21}.     \notag
 \end{equation}
They share similar properties as $y_{11}$ and $y_{21}$, thanks to the observations and the flux condition:
\begin{gather}
\|g_{12}\|^2,\|g_{22}\|^2\in [1- \calC_1,1],      \notag\\
\|g_{12}\|_{H^3}, \|g_{22}\|_{H^3}\leq \frac{K}{2}; \;
|(g_{12})_x(0)|, |(g_{22})_x(0)|\leq \left(\frac{2\calC_1}{t_1}\right)^{1/2}, \notag\\
\int_0^{T-4t_1}\big(S(s)g_{12}\big)_x^2(0)\,ds\leq \calC_1,\;
\int_0^{T-4t_1}\big(S(s)g_{22}\big)_x^2(0)\,ds\leq \calC_1,      \notag\\
|\langle g_{12}, g_{22}\rangle|\leq 2 \calC_1,     \notag\\
Ag_{12}=AS(\bar{t}_1)\cL g_{11}\in \text{span}\{g_{22}\}     \notag
\end{gather}

 Finally we can set 
 \begin{equation}
 y_{12}:= \cL g_{12},\; y_{22}:= \cL \Pi^{\perp}_{y_{12}} g_{22}, \; y^{\perp}_{32}:=\Pi^{\perp}_{y_{12}} A y_{22},\; y_{32}:= \cL  y^{\perp}_{32}.     \notag
 \end{equation}
 Notice that $y_{22}$ is obtained from the Gram-Schmidt procedure, thus 
 \begin{equation}
 Ay_{12}\in \text{span}\{g_{12}, g_{22}\}=\text{span}\{y_{12}, y_{22}\}.     \notag
 \end{equation}
It can be proved that $\{y_{12}, y_{22}, y_{32}\}$ share similar properties as $\{y_{11}, y_{21}\}$:  ``small'' flux. It is easy to get for $y_{12}$:
\begin{equation}
\int_0^{T-4t_1}\big(S(s)y_{12}\big)_x^2(0)\,ds\leq 2\calC_1.     \notag
\end{equation}
As for $y_{22}$, it can be written in the form of a ``prepared'' flow: 
\begin{align*}
y_{22}&= \frac{1}{\|\Pi^{\perp}_{y_{12}}g_{22}\|}\left(g_{22}-\frac{g_{12}}{\|g_{12}\|^2} |\langle g_{12}, g_{22}\rangle|\right),\\
&= \frac{1}{\|\Pi^{\perp}_{y_{12}}g_{22}\|} S(\bar t_1)\left(y_{21}- \frac{|\langle g_{12}, g_{22}\rangle|}{|g_{12}\|^2}y_{11}\right),\\
&=: S(\bar t_1) z_{22},
\end{align*}
for which we can successively get,
\begin{equation}
\|\Pi^{\perp}_{y_{12}}g_{22}\|^2= \Big\|g_{22}-\frac{g_{12}}{\|g_{12}\|^2} |\langle g_{12}, g_{22}\rangle|\Big\|^2\in [1-5\calC_1  ,1],     \notag
\notag\end{equation}
\begin{equation}
\|z_{22}\|\leq \frac{1+ \calC_1}{(1- \calC_1)\sqrt{1-5\calC_1}}\leq 2 ,     \notag
\notag\end{equation}
\begin{equation}
\int_0^{T-2t_1}\big(S(s)z_{22}\big)_x^2(0)\,ds\leq  \frac{8\calC_1}{1-5\calC_1}\leq 16\calC_1,     \notag
\notag\end{equation}
\begin{equation}
\int_0^{T-5t_1}\big(S(s)AS(t_1)z_{22}\big)_x^2(0)\,ds\leq 12\tilde{K}^2\delta^2+ \frac{96\calC_1}{\delta^2}, \forall \delta\in (0, t_1). \notag
\end{equation}
Thanks to the above inequalities on $z_{22}$, we can further get
\begin{equation}
\|y_{22}\|_{H^3}\leq \frac{\|\Pi^{\perp}_{y_{12}}g_{22}\|_{H^3}}{\|\Pi^{\perp}_{y_{12}}g_{22}\|}\leq  \frac{1}{\sqrt{1-5\calC_1}}\cdot\frac{K}{2}(1+ \frac{2\calC_1}{1-\calC_1})\leq K \frac{1+ 4\calC_1}{2\sqrt{1-5\calC_1}}\leq \frac{3K}{4},   
\notag\end{equation}
\begin{equation}
\|Ay_{22}\|_{L^2}\leq  \frac{\tilde{K}}{\sqrt{1-5\calC_1}}\leq 2\tilde{K},
\notag\end{equation}
\begin{equation}
\int_0^{T-4t_1}\big(S(s)y_{22}\big)_x^2(0)\,ds\leq  \frac{8\calC_1}{1-5\calC_1}\leq 16 \calC_1, \notag
\end{equation}
if $\calC_1\leq 1/18$.
 
About $y^{\perp}_{32}$ which is related to $Ay_{22}$, we know from its definition that, 
 \begin{equation}
 y^{\perp}_{32}= Ay_{22}- \Pi_{y_{12}} A y_{22}= Ay_{22}- l y_{12} \textrm{ with }|l|\leq 2\tilde{K},      \notag
 \end{equation}
 thus the inner products are
 \begin{gather}
 \langle  y^{\perp}_{32}, y_{12}\rangle=0,      \notag\\
 \langle  y^{\perp}_{32}, y_{22}\rangle=  \langle  Ay_{22}, y_{22}\rangle=  \langle  AS(\bar t_1)z_{22}, S(\bar t_1)z_{22}\rangle\leq 16\delta\tilde{K}^2+\frac{8\calC_1}{\delta}, \forall \delta\in (0, \frac{1}{2}).     \notag
 \end{gather}
 Moreover, the flux of $y^{\perp}_{32}$ can be estimated by 
 \begin{align*}
 \int_0^{T-5t_1}\big(S(s)y^{\perp}_{32}\big)_x^2(0)\,ds&= \int_0^{T-5t_1}\big(S(s)Ay_{22}- l S(s)y_{12} \big)_x^2(0)\,ds,\\
 &\leq 2\int_0^{T-5t_1}\big(S(s)Ay_{22}\big)_x^2(0)\,ds+ 2l^2 \int_0^{T-5t_1} (S(s)y_{12})_x^2(0) \,ds,\\
 &\leq 24\delta^2\tilde{K}^2+ \frac{192\calC_1}{\delta^2}+ 16 \tilde{K}^2\calC_1, \forall \delta\in (0, t_1).
 \end{align*}
 
 If $\|y^{\perp}_{32}\|<\frac{\gamma}{2}$, then stop, and we verify that $\{y_{12}, y_{22}\}$ satisfy conditions \eqref{cond-1}--\eqref{cond-5}, thus $\mathcal{B}_{\gamma}$  is not empty,  provided that 
 \begin{equation}
 (2\calC_1/t_1)^{1/2}< \gamma/\sqrt{B(L, K)}.    \notag
 \end{equation}
 
 If $\|y^{\perp}_{32}\|\geq\frac{\gamma}{2}$, then we define $\calC_2= \calC_2(\calC_1, \delta_2)=\calC_2(\varepsilon,\delta_1, \delta_2, \gamma, \tilde{K})$ by
 \begin{equation}
 \calC_2:= \left(\frac{2}{\gamma}\right)^2\left( 24\delta_2^2\tilde{K}^2+ \frac{192\calC_1}{\delta_2^2}+ 16 \tilde{K}^2\calC_1\right), \forall \delta_2\in (0, \min\{\frac{1}{2}, t_1\}),     \notag
 \end{equation}
 which is an upper bound for 
 \begin{equation}
 \int_0^{T-5t_1} \big(S(s)y_{i 2}\big)_x^2(0)\,ds\; \forall i\in\{1, 2, 3\}, \textrm{ and } |\langle  y_{32}, y_{22}\rangle|.     \notag
 \end{equation}
 Now we proceed to the next step. 
 
 \begin{equation*}
 \xymatrix{
 u\ar@{~>}[r]^{S(t_1)} & g_{11}\ar@{-->}[d]^{A} \ar@{->>}[r]^{\cL}   & y_{11}\ar@{~>}[r]^{S(\bar{t}_1)}\ar[d]^{\cL A}  &g_{12}\ar[r]^{id}   &h_{12}\ar@{->>}[r]^{\cL}                &y_{12}\ar@{~>}[r]^{S(\bar{t}_2)}  \ar@{}|\perp[d]  &g_{13}\ar@{-->>}[r]&  y_{13}\ar@{}|\perp[d]  \ar@{~>}[r]^{S(\bar{t}_3)} & \\
    &Ag_{11}\ar@{->>}[r]^{\cL}     & y_{21}\ar@{~>}[r]^{S(\bar{t}_1)} &g_{22}\ar[r]^{\Pi^{\perp}_{h_{12}}}   &h_{22}\ar@{->>}[r]^{\cL} \ar@{}|(.6)\circlearrowleft[dr]              &y_{22}\ar[d]^{\cL \Pi^{\perp} A} \ar@{~>}[r]^{S(\bar{t}_2)} \ar@{-->}[dll]^{A\;\;\;} &g_{23}\ar@{-->>}[r] & y_{23}\ar@{}|\perp[d] \ar@{~>}[r]^{S(\bar{t}_3)} &  \\
    &                & z_{22}\ar@{~>}[urrr]^{S(\bar{t}_1)}\ar@{-->>}_{AS(\bar{t}_1)}[r] &Ay_{22}\ar[r]_{\Pi^{\perp}_{y_{12}}}&y_{32}^{\perp}\ar@{->>}[r]_{\cL}  &y_{32}\ar@{~>}[r]_{S(\bar{t}_2)}&g_{33}\ar@{-->>}[r]& y_{33}\ar[d]^{\cL \Pi^{\perp} A}\ar@{~>}[r]^{S(\bar{t}_3)} & \\
&&&&&&&y_{43}    \ar@{~>}[r]_{S(\bar{t}_3)} &
    }
\end{equation*}

{\bf{Step 3}}: Here we assume $\|y_{32}^{\perp}\|\geq \frac{\gamma}{2}$, and set 
\begin{align*}
y_{32}:=  \cL \Pi^{\perp}_{y_{12}} A y_{22}.
\end{align*}
Then proceeding as before, we can obtain boundary trace inequality. 
Observe that the projection onto $y_{12}^{\perp}$ introduces a flux term of size at most $O(\epsilon)$ due to our earlier boundary flux estimation for $y_{12}$. 
\\
Since we have lost regularity for $y_{32}$, we regain this by applying the flow $S(\bar{t}_2)$ again, for some  $\bar{t}_2\in [t_1, 2_1]$, resulting in 
\begin{equation}
g_{13}= S(\bar{t}_2) y_{12}, \;g_{23}= S(\bar{t}_2) y_{22}, \;g_{33}= S(\bar{t}_2) y_{32}.   \notag
\end{equation}
Then we apply the Gram-Schmi\,dt procedure, by first orthogonalizing
\begin{equation}
h_{13}=g_{13},\; h_{23}=\Pi_{h_{13}}^{\perp} g_{23}, \; h_{33}=\Pi_{\{h_{13}, h_{23}\}}^{\perp} g_{33}= S(\bar{t}_2)z_{33},     \notag
\end{equation}
and then normalizing
\begin{equation}
y_{13}= \cL h_{13},\; y_{23}= \cL h_{23},\; y_{33}= \cL h_{33}.     \notag
\end{equation}
As the demonstration in Step 2, we are able to estimate $y_{13}, y_{23}, y_{33}$ and $Ay_{33}$ in terms of some $\calC_3$ which only depends on $\calC_2$ and $\delta_3$.  Then if 
\[ y^{\perp}_{43}:=\Pi^{\perp}_{\{y_{13}, y_{23}\}}Ay_{33},\;
\|y^{\perp}_{43}\|<\frac{\gamma}{2}, 
\]
we stop the process. Else we continue iteratively.   We ignore detailed calculation in this step, as it will be covered by the next step for  general cases.
\\

{\bf{Step 4}}: General induction step. In this step we provide general iteration estimates.  At first we prove the following lemma.
\begin{lemma}\label{lem-cn}
Let $n\geq 2$.  For any $0< (n+1)c_n< \min\{1/18, 1/2n\}$, any $T_n\geq 3t_1$, any orthonormal functions $\{y_{1n},..., y_{nn}\}$, and any normal function $y_{(n+1) n}$ satisfying
\begin{gather}
A y_{i n}\in \textrm{span}\{y_{1n},..., y_{nn}\}, \forall i\in\{1, ..., n-1\},\label{ayiinspan}\\ 
A y_{n n}\in \textrm{span}\{y_{1n},..., y_{(n-1) n}, y_{(n+1) n}\}, \label{ayninspan}\\
\langle y_{i n}, y_{(n+1) n}\rangle =0, \forall i\in\{1, ..., n-1\},\\
|\langle y_{n n}, y_{(n+1) n}\rangle|\leq c_n,\\
\int_0^{T_n} \big(S(s)y_{i n}\big)_x^2(0)\,ds\leq c_n, \forall i\in\{1, ..., n+1\}, \label{ssyin}
\end{gather}
we can find orthonormal functions $\{y_{1 (n+1)},..., y_{(n+1) (n+1)}\}$ such that
\begin{gather}
\|y_{i (n+1)}\|_{H^3}\leq 3K/4, \; |(y_{i (n+1)})_x(0)|\leq \frac{3}{2}\sqrt{\frac{(n+1)c_n}{t_1}},\\
\int_0^{T_n-3t_1} \big(S(s)y_{i (n+1)}\big)_x^2(0)\,ds\leq 2c_n, \forall i\in\{1, ..., n+1\},\\
A y_{i (n+1)}\in \textrm{span}\{y_{1(n+1)},..., y_{(n+1)(n+1)}\}, \forall i\in\{1, ..., n\}.\label{Ayin1}
\end{gather}
Moreover, if we further project $Ay_{(n+1) (n+1)}$  on $\textrm{span}\{y_{1(n+1)},..., y_{n(n+1)}\}$,
\begin{equation}
y_{(n+2)(n+1)}^{\perp}:= \Pi_{y_1,..., y_{n (n+1)}}^{\perp} Ay_{(n+1) (n+1)}, 
\end{equation}
then it satisfies,
\begin{equation}
\int_0^{T_n-3t_1} \big(S(s)y_{(n+2) (n+1)}^{\perp}\big)_x^2(0)\,ds\leq \left(\frac{\gamma}{2}\right)^2 c_{n+1}, \notag
\end{equation}
\begin{equation}
|\langle  y_{(n+1) (n+1)}, y_{(n+2) (n+1)}^{\perp}\rangle| \leq  \left(\frac{\gamma}{2}\right)^2 c_{n+1},\notag
\end{equation}
where $c_{n+1}= c_{n+1}\big(c_n, \delta_{n+1}\big)$ is given by
\begin{equation}
 c_{n+1}:= \left(\frac{2}{\gamma}\right)^2(n+1) \left(6\tilde{K}^2  \delta_{n+1}^2+ \frac{12c_n}{\delta_{n+1}^2} +4\tilde{K}^2c_n\right), \forall \delta_{n+1}\in (0, \min\{1/2, t_1\}).     \notag
\end{equation}
\end{lemma}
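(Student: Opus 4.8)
The plan is to run, for a general $n$, the same construction carried out explicitly for $n=1,2$ in Steps 1--3, keeping every estimate quantitative by repeated appeals to Observations $(i)$--$(vii)$ of Step~0. The input $(n+1)$-tuple $\{y_{1n},\dots,y_{nn},y_{(n+1)n}\}$ is almost orthonormal (all inner products vanish except $\langle y_{nn},y_{(n+1)n}\rangle=O(c_n)$) and each member has flux at most $c_n$ on $[0,T_n]$. Since $T_n\ge 3t_1$, I would integrate $\sum_{i=1}^{n+1}(S(s)y_{in})_x^2(0)$ over $s\in[t_1,2t_1]$ and use \eqref{ssyin} to produce $\bar t_n\in[t_1,2t_1]$ with $|(S(\bar t_n)y_{in})_x(0)|\le\sqrt{(n+1)c_n/t_1}$ for every $i$, and set $g_{i(n+1)}:=S(\bar t_n)y_{in}$. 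By Observation $(i)$ (with $\bar t_n\ge t_1$) each $g_{i(n+1)}$ lies in $H_{(0)}^3$, hence satisfies the boundary vanishing, and $\|g_{i(n+1)}\|_{H^3}\le K/2$; by Observations $(ii)$ and $(vi)$ the family stays almost orthonormal (norms in $[1-O(c_n),1]$, off-diagonals $O(c_n)$) and, by time translation, still has flux $\le c_n$ on $[0,T_n-2t_1]$; and since $A$ commutes with $S(\bar t_n)$ on $H_{(0)}^3\subset\mathrm{Dom}(A)$, hypotheses \eqref{ayiinspan}--\eqref{ayninspan} become $Ag_{i(n+1)}\in\mathrm{span}\{g_{1(n+1)},\dots,g_{n(n+1)}\}$ for $i\le n-1$ and $Ag_{n(n+1)}\in\mathrm{span}\{g_{1(n+1)},\dots,g_{(n-1)(n+1)},g_{(n+1)(n+1)}\}$.

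Next I would Gram--Schmidt and normalize, $h_{1(n+1)}=g_{1(n+1)}$, $h_{i(n+1)}=\Pi^\perp_{\{h_{1(n+1)},\dots,h_{(i-1)(n+1)}\}}g_{i(n+1)}$, $y_{i(n+1)}=\cL h_{i(n+1)}$. Because the $g$'s are $O(c_n)$-close to orthonormal and $(n+1)c_n<\min\{1/18,1/2n\}$, each $h_{i(n+1)}$ is $g_{i(n+1)}$ plus an $O(nc_n)$ combination of earlier $g$'s, so $\|h_{i(n+1)}\|\in[\sqrt{1-O(nc_n)},1+O(nc_n)]$ and the spans of the $h$'s and $g$'s coincide level by level. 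The three bounds in the conclusion then follow: the $H^3$ bound $3K/4$ and the boundary-trace bound $\frac32\sqrt{(n+1)c_n/t_1}$ come from dividing the $K/2$ and $\sqrt{(n+1)c_n/t_1}$ bounds on the $g$'s by $\|h_{i(n+1)}\|\ge\sqrt{1-O(nc_n)}$, and the flux bound $2c_n$ on $[0,T_n-3t_1]$ follows from the $g$-flux bound together with linearity (shifting by $\bar t_n\le 2t_1$ keeps the flux window inside $[0,T_n]$); boundary vanishing is inherited since linear combinations preserve it. Since $Ah_{i(n+1)}\in\mathrm{span}\{g_{1(n+1)},\dots,g_{(n+1)(n+1)}\}=\mathrm{span}\{h_{1(n+1)},\dots,h_{(n+1)(n+1)}\}$ for $i\le n$, normalizing yields \eqref{Ayin1}.

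Finally I would analyse the new direction. Unwinding the Gram--Schmidt formulas gives $h_{(n+1)(n+1)}=g_{(n+1)(n+1)}-\sum_{j=1}^n\nu_j g_{j(n+1)}=S(\bar t_n)\zeta$ with $\zeta=y_{(n+1)n}-\sum_j\nu_j y_{jn}$, $|\nu_j|=O(c_n)$, hence $y_{(n+1)(n+1)}=S(\bar t_n)z$ with $z=\zeta/\|h_{(n+1)(n+1)}\|$, $\|z\|^2\le 3/2$, and $z$ of flux $O(c_n)$ on $[0,T_n]$. Now $y_{(n+2)(n+1)}^\perp=Ay_{(n+1)(n+1)}-\sum_{j=1}^n\langle Ay_{(n+1)(n+1)},y_{j(n+1)}\rangle y_{j(n+1)}$ is a sum of $n+1$ terms; Observation $(i)$ gives $\|Ay_{(n+1)(n+1)}\|=\|AS(\bar t_n)z\|\le\tilde K\|z\|$ (so the projection coefficients have squared sum $\le\tilde K^2\|z\|^2$), Observation $(iv)$ applied to $z$ with $t=\bar t_n$ and parameter $\delta_{n+1}$ controls the flux of $S(s)Ay_{(n+1)(n+1)}$ on $[0,T_n-3t_1]$ by $3\tilde K^2\delta_{n+1}^2\|z\|^2+O(c_n)/\delta_{n+1}^2$, and the $y_{j(n+1)}$ have flux $\le 2c_n$; combining via $(\sum_{k=1}^{n+1}a_k)^2\le(n+1)\sum a_k^2$ and using $\|z\|^2\le 3/2$ gives the claimed flux bound $(n+1)(6\tilde K^2\delta_{n+1}^2+12c_n/\delta_{n+1}^2+4\tilde K^2 c_n)=(\gamma/2)^2 c_{n+1}$. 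For the inner product, $\langle y_{(n+1)(n+1)},y_{(n+2)(n+1)}^\perp\rangle=\langle y_{(n+1)(n+1)},Ay_{(n+1)(n+1)}\rangle$ (as $y_{(n+1)(n+1)}\perp y_{j(n+1)}$, $j\le n$); writing this as $\langle S(\bar t_n)z,AS(\bar t_n)z\rangle$ and invoking Observation $(v)$ with $\delta=\delta_{n+1}^2$ gives $6\tilde K^2\delta_{n+1}^2+c_n/\delta_{n+1}^2\le(\gamma/2)^2 c_{n+1}$, with no further normalization since $\|S(\bar t_n)z\|=1$.

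The main obstacle is not conceptual but a matter of bookkeeping: one has to verify that the $O(c_n)$- and $O(nc_n)$-size errors introduced by Gram--Schmidt, by $\cL$, and by composing with $S(\bar t_n)$ and $A$ stay controlled \emph{uniformly in $n$}, which is precisely what $(n+1)c_n<\min\{1/18,1/2n\}$ guarantees (it keeps $1/\sqrt{1-O(nc_n)}\le 3/2$, $\|z\|^2\le 3/2$, and all near-orthogonality defects small). The one delicate point is tracking constants so the recursion for $c_{n+1}$ closes with exactly the stated coefficients; this is what dictates using $\delta=\delta_{n+1}$ in Observation $(iv)$ but $\delta=\delta_{n+1}^2$ in Observation $(v)$, after which the three terms $6\tilde K^2\delta_{n+1}^2$, $12c_n/\delta_{n+1}^2$, $4\tilde K^2 c_n$ all appear with room to spare.
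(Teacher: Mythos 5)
Your proposal follows essentially the same route as the paper's proof: average the flux bound \eqref{ssyin} over $[t_1,2t_1]$ to pick $\bar t_n$, set $g_{i(n+1)}=S(\bar t_n)y_{in}$, propagate the near-orthonormality, flux and $H^3$ bounds through the flow using Observations $(i)$, $(ii)$, $(vi)$, Gram--Schmidt and normalize, then write $y_{(n+1)(n+1)}=S(\bar t_n)z$ and close with Observations $(i)$, $(iv)$, $(v)$ exactly as you describe (with $(\sum a_k)^2\le(n+1)\sum a_k^2$ producing the $(n+1)$ factor in $c_{n+1}$). The only substantive difference is cosmetic: you parametrize the Gram--Schmidt step by successive orthogonal projections, while the paper writes it as right-multiplication by an upper-triangular matrix $A_{n+1}$ with unit diagonal and estimates the entries $a_{ji}$ directly; these are of course the same construction. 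One small bookkeeping caution: the coefficients $6\tilde K^2\delta_{n+1}^2$, $12c_n/\delta_{n+1}^2$, $4\tilde K^2c_n$ in $c_{n+1}$ are calibrated to $\|z\|^2\le 2$ (and flux of $z$ at most $2c_n$); your tighter claim $\|z\|^2\le 3/2$ is not what the paper's estimate $\|z\|\le(1+4(n+1)c_n)(1+2(n+1)c_n)\le\sqrt 2$ actually gives, but since a smaller $\|z\|^2$ only improves the bound, this does not affect the validity of the conclusion.
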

\begin{proof}
These functions are directly constructed via the Gram-Schmidt procedure.  

It follows from \eqref{ssyin}  that 
\begin{equation}
\int_{t_1}^{2t_1} \Big(\sum_{i=1}^{n+1} (S(s)y_{i n})_x^2\Big)(0)\,ds\leq (n+1)c_n,     \notag
\end{equation}
hence there exists $\bar{t}_n\in [t_1, 2t_1]$ such that 
\begin{equation}\label{boundsmall}
\sum_{i=1}^{n+1} (S(\bar{t}_n)y_{i n})_x^2(0)\leq \frac{(n+1)c_n}{t_1}.
\end{equation}

For $i\in\{1, ..., n+1\}$, we define 
\begin{equation}
g_{i (n+1)}:= S(\bar{t}_n)y_{i n}, 
\end{equation}
which, thanks to the boundary bound condition  \eqref{boundsmall}, the flux condition \eqref{ssyin}, Observation $(i)$, and Observation $(vi)$,  satisfies
\begin{equation}
1-c_n\leq \|g_{i  (n+1)}\|\leq 1, \; \|g_{i  (n+1)}\|_{H^3}\leq \frac{K}{2}, \notag
\end{equation}
\begin{equation}
g_{i (n+1)}(0)= g_{i (n+1)}(L)=(g_{i (n+1)})_x(L)=0, \; |(g_{i  (n+1)})_x(0)|\leq \sqrt{\frac{(n+1)c_n}{t_1}}, \notag 
\end{equation}
\begin{equation}
\int_0^{T_n- 2t_1} \big(S(s)g_{i (n+1)}\big)_x^2(0)\,ds\leq c_n, \notag 
\end{equation}
\begin{gather}
\langle g_{i (n+1)}, g_{j (n+1)}\rangle\leq c_n, \forall\; (j, i)\neq (n, n+1), j<i, \notag \\\langle g_{n (n+1)},
g_{(n+1) (n+1)}\rangle\leq 2c_n.\notag 
\end{gather}
Then we derive from \eqref{ayiinspan}--\eqref{ayninspan} that 
\begin{align*}
A g_{1 (n+1)},..., A g_{(n-1) (n+1)}&\in \textrm{span}\{S(\bar{t}_n)y_{1n},..., S(\bar{t}_n)y_{nn}\},\\
&=\textrm{span}\{g_{1 (n+1)},..., g_{n (n+1)}\}, \notag
\end{align*}
and that 
\begin{align*}
A g_{n (n+1)}= S(\bar{t}_n) Ay_{n n}&\in \textrm{span}\{S(\bar{t}_n)y_{1n},..., S(\bar{t}_n)y_{(n-1)n}, S(\bar{t}_n)y_{(n+1)n}\},\\
&=\textrm{span}\{g_{1 (n+1)},..., g_{(n-1) (n+1)}, g_{(n+1) (n+1)}\}, 
\end{align*}
thus 
\begin{equation}
A g_{i (n+1)}\in \textrm{span}\{g_{1 (n+1)},..., g_{(n+1) (n+1)}\}, \forall i\in\{1, ..., n\}.\notag
\end{equation}

Next, we orthogonalize $\{g_{i (n+1)}\}$  by  $\{h_{i (n+1)}\}$.  More precisely, we find a upper triangular matrix $A_{n+1}= (a_{i j})_{1\leq i, j\leq n+1}$ with $a_{i i}=1$, such that the elements of 
\begin{equation}
(h_{1 (n+1)}, h_{2 (n+1)},..., h_{(n+1) (n+1)}):= (g_{1 (n+1)}, g_{2 (n+1)},..., g_{(n+1) (n+1)}) A_{n+1} \notag
\end{equation}
are orthogonal.  In such a case, the orthonormal functions $\{y_{i (n+1)}\}$ can be chosen by 
\begin{equation}
y_{i (n+1)}:= \cL h_{i (n+1)}= \frac{h_{i (n+1)}}{\|h_{i (n+1)}\|}.     \notag
\end{equation}

In the remaining part of the proof, we check that $\{y_{i (n+1)}\}$ verify the lemma.
Now we need to fix the value of $a_{i j}$.  From the construction of $h_{i (n+1)}$ we know that 
\begin{equation}
\textrm{span}\{h_{1 (n+1)},..., h_{i (n+1)}\}= \textrm{span}\{g_{1 (n+1)},..., g_{i (n+1)}\}, \forall i\in\{1, ..., n+1\}, \notag
\end{equation}
which implies that 
\begin{equation}\label{Ah-or}
A h_{i (n+1)}, A g_{i (n+1)}\in \textrm{span}\{h_{1 (n+1)},..., h_{(n+1) (n+1)}\}, \forall i\in\{1, ..., n\}.
\end{equation}
Moreover, by the definition of $h_{i (n+1)}$,
\begin{equation}
h_{i (n+1)}\perp g_{j (n+1)}, \forall  1\leq j<i\leq n+1, \notag
\end{equation}
hence
\begin{equation}
0= \langle h_{i (n+1)}, g_{j (n+1)}\rangle= \sum_{k=1}^i a_{k i} \langle g_{k (n+1)}, g_{j (n+1)}\rangle,\notag
\end{equation}
which implies
\begin{equation}
|a_{j i}|\leq c_n\left(\sum_{1\leq k\leq i-1}|a_{k i}|\right)+ |\langle g_{i (n+1}), g_{j (n+1)}\rangle|,\notag
\end{equation}
thus
\begin{equation}
 \sum_{1\leq k\leq i-1}|a_{k i}|\leq \frac{i c_n}{1-(i-1)c_n}\leq \frac{(n+1)c_n}{1-nc_n}\leq 2(n+1)c_n , \forall 1<i\leq n+1.     \notag
\end{equation}
Therefore, 
\begin{gather}
\|h_{i (n+1)}\|= \|\sum_{k=1}^i a_{ki} g_{k (n+1)}\|\in [\sqrt{1-c_n}-2(n+1)c_n,  1+2(n+1)c_n]\subset [1-3(n+1)c_n, 1+2(n+1)c_n], \notag\\
\|h_{i (n+1)}\|^{-1}\in [1-2(n+1)c_n, 1+4(n+1)c_n], \forall 1\leq i\leq n+1.     \notag
\end{gather}
Many informations about the orthonormal basis $\{y_{i (n+1)}\}_{i\in \{1, ..., n+1\}}$ can be obtained from such explicit formulas. At first condition \eqref{Ayin1} is guaranteed by \eqref{Ah-or} and the definition of $y_{i (n+1)}$. Then, 
\begin{align}
\|y_{i (n+1)}\|_{H^3}&\leq \|h_{i (n+1)}\|^{-1} \|h_{i (n+1)}\|_{H^3}, \notag\\
&\leq (1+4(n+1)c_n) \sum_{k=1}^i a_{k i} \|g_{k (n+1)}\|_{H^3},  \notag\\
&\leq \frac{K}{2} (1+4(n+1)c_n) (1+ 2(n+1)c_n), \notag \\
&\leq \frac{3K}{4}.     \notag
\end{align}
Similarly, we have 
\begin{equation}
|(y_{i (n+1)})_{x}(0)|\leq \frac{3}{2}\sqrt{\frac{(n+1)c_n}{t_1}},     \notag
\end{equation}
\begin{align}
\int_0^{T_n-2t_1} \Big(S(s)y_{i (n+1)}\Big)_x^2(0)\,ds&\leq (1+ 4(n+1)c_n)^2 \int_0^{T_n-2t_1} \Big(S(s)h_{i (n+1)}\Big)_x^2(0)\,ds, \notag\\
&\leq (1+ 4(n+1)c_n)^2\int_0^{T_n-2t_1} \Big(\sum_{k=1}^i a_{k i} \big(S(s)g_{k (n+1)}\big)_x\Big)^2(0)\,ds,\notag \\
&\leq  (1+ 4(n+1)c_n)^2\int_0^{T_n-2t_1} \Big(\sum_{k=1}^i \sqrt{|a_{k i}|}\cdot \sqrt{|a_{k i}|}\big(S(s)g_{k (n+1)}\big)_x(0)\Big)^2\,ds,\notag \\
&\leq (1+ 4(n+1)c_n)^2\int_0^{T_n-2t_1} (1+2(n+1)c_n)\Big(\sum_{k=1}^i  |a_{k i}|\big(S(s)g_{k (n+1)}\big)_x^2(0)\Big)\,ds,\notag\\
&\leq (1+ 4(n+1)c_n)^2(1+2(n+1)c_n)^2 c_n, \notag\\
&\leq 2c_n.     \notag
\end{align}
It remains to estimate $y_{(n+2) (n+1)}:= Ay_{(n+1) (n+1)}$. Instead of dealing with $Ay_{(n+1) (n+1)}$ directly, we consider some $z_{(n+1) (n+1)}$ such that $S(\bar{t}_n)z_{(n+1) (n+1)}= y_{(n+1) (n+1)}$, therefore  $Ay_{(n+1) (n+1)}$ can be estimated from observations:$(i)$, $(iv)$ and $(v)$.  In fact, 
\begin{align}
y_{(n+1) (n+1)}&=\frac{1}{\|h_{(n+1) (n+1)}\|} \sum_{k=1}^{n+1} a_{k (n+1)} g_{k (n+1)}, \notag\\
&= \frac{1}{\|h_{(n+1) (n+1)}\|} \sum_{k=1}^{n+1} a_{k (n+1)} S(\bar{t}_n) y_{k n}, \notag\\
&= S(\bar{t}_n)\left(\frac{1}{\|h_{(n+1) (n+1)}\|} \sum_{k=1}^{n+1} a_{k (n+1)}y_{k n}\right), \notag\\
&=: S(\bar{t}_n)z_{(n+1) (n+1)},     \notag
\end{align}
while $z_{(n+1) (n+1)}$ satisfies
\begin{gather}
\|z_{(n+1) (n+1)}\|\leq (1+4(n+1)c_n) (1+2(n+1)c_n)\leq \sqrt{2},     \notag\\
\int_0^{T_n} \big(S(s)z_{(n+1) (n+1)}\big)_x^2(0)\,ds\leq (1+4(n+1)c_n)^2 (1+2(n+1)c_n)^2 c_n\leq 2c_n.     \notag
\end{gather}
Thanks to  Observation $(i)$, we have
\begin{equation}
\|Ay_{(n+1) (n+1)}\|= \|AS(\bar{t}_n)z_{(n+1) (n+1)}\|\leq \sqrt{2}\tilde{K}.     \notag
\end{equation}
Thus, 
\begin{align}
y_{(n+2)(n+1)}^{\perp}&= \Pi_{y_1,..., y_{n (n+1)}}^{\perp} Ay_{(n+1) (n+1)}, \notag\\
&= Ay_{(n+1) (n+1)}+ \sum_{k=1}^{n} b_k y_{k (n+1)}, \notag\\
&= AS(\bar{t}_n)z_{(n+1) (n+1)}+ \sum_{k=1}^{n} b_k y_{k (n+1)},     \notag
\end{align}
with 
\begin{equation}
\|\sum_{k=1}^{n} b_k y_{k (n+1)}\|\leq \|AS(\bar{t}_n)z_{(n+1) (n+1)}\|\leq\sqrt{2}\tilde{K},     \notag
\end{equation}
which, together with the orthonormal property of $y_{i (n+1)}$, implies that 
\begin{equation}
\sum_{k=1}^n b_k^2\leq 2\tilde{K}^2.     \notag
\end{equation}

Finally, we are able to get
\begin{align*}
|\langle  y_{(n+1) (n+1)}, y_{(n+2) (n+1)}^{\perp}\rangle|&=|\langle  y_{(n+1) (n+1)}, AS(\bar{t}_n)z_{(n+1) (n+1)}+ \sum_{k=1}^{n} b_k y_{k (n+1)}\rangle|, \\
&= |\langle S(\bar{t}_n)z_{(n+1) (n+1)}, AS(\bar{t}_n)z_{(n+1) (n+1)}\rangle|, \\
&\leq 4\delta \tilde{K}^2\|z_{(n+1) (n+1)}\|^2+ \frac{c_n}{\delta}, \\
&\leq 8\delta\tilde{K}^2+ \frac{c_n}{\delta},  \forall \delta\in (0, 1/2),
\end{align*}
and 
\begin{align*}
&\;\;\;\;\;\int_0^{T_n-3t_1} \big(S(s)y_{(n+2) (n+1)}^{\perp}\big)_x^2(0)\,ds,\\
&= \int_0^{T_n-3t_1} \left(\Big(S(s)AS(\bar{t}_n)z_{(n+1) (n+1)}\Big)_x(0)+ \sum_{k=1}^{n} b_k\Big(S(s) y_{k (n+1)}\Big)_x(0)\right)^2\,ds, \\
&\leq (n+1) \int_0^{T_n-3t_1}\left(S(s)AS(\bar{t}_n)z_{(n+1) (n+1)}\Big)_x^2(0)+ \sum_{k=1}^{n} b_k^2\Big(S(s) y_{k (n+1)}\Big)_x^2(0) \right)\,ds,\\
&\leq (n+1) \left(6\tilde{K}^2  \delta^2+ \frac{12c_n}{\delta^2} +4\tilde{K}^2c_n\right), \forall \delta\in (0, t_1).
\end{align*}
Notice that $\delta_{n+1}^2<\delta_{n+1}$ for any $\delta_{n+1}<\min\{1/2, t_1\}$,  we get the last two inequalities of  Lemma \ref{lem-cn} and complete  its proof.
\end{proof}

Let us define $y_{(n+2) (n+1)}:= \cL y_{(n+2) (n+1)}^{\perp}$. Then it satisfies
\begin{equation}\label{yin1orn2}
\langle  y_{i (n+1)}, y_{(n+2) (n+1)}\rangle=0, \forall i\in \{1,..., n\},
\end{equation}
\begin{equation}\label{smallyn1n2}
|\langle  y_{(n+1) (n+1)}, y_{(n+2) (n+1)}\rangle|\leq \left(\frac{\gamma}{2}\right)^2 \frac{c_{n+1}}{\|y_{(n+2) (n+1)}^{\perp}\|},
\end{equation}
\begin{equation}\label{smallyn2x}
\int_0^{T_n-3t_1} \big(S(s)y_{(n+2) (n+1)}\big)_x^2(0)\,ds\leq \left(\frac{\gamma}{2}\right)^2 \frac{c_{n+1}}{\|y_{(n+2) (n+1)}^{\perp}\|^2},
\end{equation}
where $0<\delta\leq \min\{1/2, t_1\}$.
Suppose that  
\begin{equation}
\frac{3}{2}\sqrt{\frac{(n+1)c_n}{t_1}}< \frac{\gamma}{\sqrt{B(L, K)}}.
\end{equation}

If $\|y_{(n+2) (n+1)}^{\perp}\|<\frac{\gamma}{2}$, then the orthonormal basis $\{y_{i (n+1)}\}_{1\leq i\leq n+1}$ satisfies conditions \eqref{cond-1}--\eqref{cond-5}, thus $\mathcal{B}_{\gamma}\neq \emptyset$.

If $\|y_{(n+2) (n+1)}^{\perp}\|\geq \frac{\gamma}{2}$, then from Lemma \ref{lem-cn} and inequalities \eqref{yin1orn2}--\eqref{smallyn2x} we derive that the orthonormal functions $\{y_{i (n+1)}\}_{1\leq i\leq n+}$ and normal function $y_{(n+2) (n+1)}$ satisfy
\begin{gather}
A y_{i (n+1)}\in \textrm{span}\{y_{1 (n+1)},..., y_{(n+1) (n+1)}\}, \forall i\in\{1, ..., n\},     \notag\\ 
A y_{(n+1) (n+1)}\in \textrm{span}\{y_{1 (n+1)},..., y_{n  (n+1)}, y_{(n+2) (n+1)}\},     \notag \\
\langle y_{i (n+1)}, y_{(n+2) (n+1)}\rangle =0, \forall i\in\{1, ..., n\},     \notag\\
|\langle y_{(n+1) (n+1)}, y_{(n+2) (n+1)}\rangle|\leq c_{n+1},     \notag\\
\int_0^{T_n-3t_1} \big(S(s)y_{i (n+1)}\big)_x^2(0)\,ds\leq c_{n+1}, \forall i\in\{1, ..., n+2\},     \notag
\end{gather}
this closes the induction loop as the above conditions have the same form of conditions \eqref{ayiinspan}--\eqref{ssyin}.
\\

{\bf{Step 5}}: Find the parameters.
Let $T\geq \big(3B(L, K)-1\big)t_1(K)$.  We find 
\begin{equation}
0< \varepsilon_0\ll \delta_1\ll \delta_2\ll...\ll \delta_{B(L, K)+1}\leq \min\{1/2, t_1\}     
\end{equation}  such that the increasing sequence $\{\calC_n\}$,
\begin{equation}
\mathcal{C}_1= \left(\delta_1^2\tilde{K}^2+ \frac{\varepsilon_0}{\delta_1^2}\right)\frac{24}{\gamma^2},     \notag
\end{equation}
\begin{equation}
\calC_2= \left(\frac{2}{\gamma}\right)^2\left( 24\delta_2^2\tilde{K}^2+ \frac{192\calC_1}{\delta_2^2}+ 16 \tilde{K}^2\calC_1\right),     \notag
\end{equation}
\begin{equation}
\calC_{n+1}=  \left(\frac{2}{\gamma}\right)^2(n+1) \left(6\tilde{K}^2  \delta_{n+1}^2+ \frac{12\calC_n}{\delta_{n+1}^2} +4\tilde{K}^2\calC_n\right), \forall  2\leq n \leq B(L, K),     \notag
\end{equation}
satisfies
\begin{equation}
\frac{3}{2}\sqrt{\frac{(n+1)\calC_n}{t_1}}< \frac{\gamma}{\sqrt{B(L, K)}}, \forall 1\leq n\leq  B(L, K),     \notag
\end{equation}
\begin{equation}
(n+1)\calC_n\leq \min\{1/18, 1/2n\}, 1\leq n\leq  B(L, K).          \notag
\end{equation}
As $\calC_n$ is increasing, it suffices to let 
\begin{equation}
\calC_{B(L, K)}\leq \frac{1}{2B(L,K)} \textrm{ and }\frac{3}{2}\sqrt{\frac{(B(L, K)+1)\calC_{B(L, K)}}{t_1}}< \frac{\gamma}{\sqrt{B(L, K)}}. 
\end{equation}
Suppose that the preceding conditions are fulfilled, then clearly we have $\calC_n\leq \delta_{n+1}^2$.    For ease of  computation, we assume for that moment $\calC_n\leq \delta_{n+1}^2$  and define a sequence $\mathcal{D}_n$ which is larger than $\calC_n$:
\begin{gather}
\mathcal{D}_0= \varepsilon_0, \;\mathcal{D}_1=  \frac{24}{\gamma^2}\left(\delta_1^2\tilde{K}^2+ \frac{\varepsilon_0}{\delta_1^2}\right), \; \mathcal{D}_2= \frac{768}{\gamma^2} \left(\delta_2^2\tilde{K}^2+ \frac{\mathcal{D}_1}{\delta_2^2}\right),     \notag\\
\mathcal{D}_{n+1}= \frac{48(n+1) }{\gamma^2} \left(\tilde{K}^2  \delta_{n+1}^2+ \frac{\mathcal{D}_n}{\delta_{n+1}^2}\right), \forall n\geq 2. \notag
\end{gather}
It suffices to let
\begin{equation}\label{DN}
\frac{3}{2}\sqrt{\frac{(B(L, K)+1)\mathcal{D}_{B(L, K)}}{t_1}}< \frac{\gamma}{\sqrt{B(L, K)}}.
\end{equation}
We try to find $\delta_n$ from backward.  It is rather easy to fix a constant, as $\tilde{\mathcal{D}}_{B(L, K)}$,  that verifies \eqref{DN}. Then we choose $\delta_{n+1}$ and $\tilde{\mathcal{D}}_{n}$ iteratively by making $\tilde{K}^2\delta^2_{n+1}$ and $\tilde{\mathcal{D}}_n/\delta^2_{n+1}$ equivalent, 
 \[\tilde{K}^2\delta^2_{n+1}=  \frac{\tilde{\mathcal{D}}_n}{\delta^2_{n+1}}=  \frac{\gamma^2}{96(n+1)}\tilde{\mathcal{D}}_{n+1}, \forall n\geq 2,  \]
as well as several similar relations for $n=0$ and $1$.  Therefore, we conclude that
\begin{equation}
\delta_{n+1}= \left(\frac{\tilde{D}_n}{\tilde{K}^2}\right)^{\frac{1}{4}}, n\geq 0.\notag
\end{equation}
\begin{equation}
\tilde{D}_{n+1}= \frac{96(n+1)\tilde{K}}{\gamma^2} \left(\tilde{D}_n\right)^{\frac{1}{2}}, n\geq 2; \; \tilde{D}_{2}= \frac{1536\tilde{K}}{\gamma^2} \left(\tilde{D}_1\right)^{\frac{1}{2}},\; \tilde{D}_{1}= \frac{48\tilde{K}}{\gamma^2} \left(\tilde{D}_0\right)^{\frac{1}{2}},\notag
\end{equation}
which gives the values of $\tilde{\mathcal{D}}_0= \varepsilon_0= \varepsilon_0(L,  \gamma, K)$:
\begin{equation}
\varepsilon_0= \left(\tilde{D}_{B(L, K)}\right)^{2^{B(L, K)}} \left(\prod_{k=2}^{B(L, K)-1} \left(\frac{96(k+1)\tilde{K}}{\gamma^2}\right)^{-2^{k+1}}\right) \left(\frac{1536\tilde{K}}{\gamma^2}\right)^{-4} \left(\frac{48\tilde{K}}{\gamma^2}\right)^{-2},
\end{equation}
as well as the values of $\delta_n$ and $\tilde{\mathcal{D}}_n$ that verifies all the above conditions, the details of which we omitted.

To conclude the proof of Proposition \ref{prop:2}, it suffices to take $K_0= 2F^3_s$, $K= K_0$,  $\bar{K}_1(L)= K_1(L, K), T_0(K, L)=3B(L, K)-1$, and $\varepsilon=\varepsilon(L, \gamma):= \varepsilon_0(L, \gamma, K)$.   We say that $\varepsilon_0$ is the value of $\varepsilon$ for Proposition \ref{prop:2}. Indeed, suppose that procedure does not stop for $1\leq n\leq B(L, K)$,  therefore, we have constructed orthonormal functions $\{y_{i B(L, K)+1}\}_{1\leq i\leq B(L, K)+1}\subset \mathcal{A}$, which is in contradiction with Corollary \ref{cor:boundon steps}.   It means that the procedure has to stop at a certain step, $i.e.$ there exists $1\leq m\leq B(L, K)$ such that, we have found orthonormal functions $\{y_{i m}\}_{1\leq i\leq m}$ and function $y^{\perp}_{(m+1) m}$ satisfying $\|y^{\perp}_{(m+1) m}\|<\frac{\gamma}{2}$, then $\{y_{i m}\}_{1\leq i\leq m}$ verifies conditions \eqref{cond-1}--\eqref{cond-5}. It means that $\mathcal{B}_{\gamma}(K_1)$ is not empty. 

\end{proof}

\section{Length critical cases}\label{sec-critical}
Our method also gives  the value of observability constant  for the case $L\in \mathcal{N}$.   The subspace $H$ is called the  controllable  part, thanks to the observability inequality:
\begin{equation}
\int_0^T|\big(S(t)u\big)_x(0)|^2\,dt\geq c\|u\|_{L^2(0,L)}^2, \forall u\in H.  \notag
\end{equation}
Furthermore, both $H$ and $M$ are  $S(t)$ invariant.  Therefore, if we  replace $L^2$ space by $(H, \|\cdot\|_{L^2})$, then the same results  hold, which yields a value of $c(L)$.

\begin{prop}\label{prop:1:H} Let $K_1\geq 1$, $L\in\mathcal{N}$. There exists $\gamma = \gamma(L, K_1)>0$ effectively computable such that the set  $\mathcal{B}_{\gamma}(K_1)$,
\begin{align*}
\mathcal{B}_{\gamma}: = \big\{u\in H^3&(0,L; \mathbb{C})\cap \mathbb{C}H;\,\|u\|_{L^2} = 1,\,\|u\|_{H^3}\leq K_1,\,\,u(0) = u(L) = u_x(L) = 0,\\&|u_x(0)|<\gamma,\;\inf_{\lambda\in \mathbb{C}}\|\lambda u - u_x - u_{xxx}\|_{L^2}<\gamma\big\}
\end{align*}
is empty.
\end{prop}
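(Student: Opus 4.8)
The plan is to run the proof of Proposition~\ref{prop:1} essentially unchanged, because nothing in that argument uses $L\notin\mathcal N$ until its very last line. Concretely, one carries over verbatim the a priori bounds $|\lambda|<K_2$ and $|\mathrm{Re}\,\lambda|\le 2\gamma$, the Paley--Wiener representation $\hat u(\xi)=i(\alpha-\beta e^{-iL\xi}+\delta i\xi+\hat f(\xi))/(\xi^3-\xi+p)$ with $p=i\lambda$, the lower bound $|\alpha|+|\beta|\ge\alpha_*(L,K_1)$ and the ratio control $|\beta|/2\le|\alpha|\le 2|\beta|$ forced by $\|u\|_{L^2}=1$, the localization of all zeros of the numerator in $D_R$ on the lattice $\mu_0+\frac{2\pi}{L}\mathbb Z$ up to error $\mathcal O(r)$, and — since every root of the denominator is an exact zero of the numerator, $\hat u$ being entire — the Vieta relations producing positive integers $k,l$ with $\tfrac{2\pi}{L}(k+l)\le 2R+\tfrac14$ and
\[
\big|L^2-\left(2\pi\sqrt{\tfrac{k^2+kl+l^2}{3}}\right)^2\big|\le 56L^2(R+1)r .
\]

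There are only finitely many admissible pairs $(k,l)$, and for $L\in\mathcal N$ they split into the \emph{non-resonant} ones, for which $\big|L^2-(2\pi)^2\tfrac{k^2+kl+l^2}{3}\big|$ is bounded below by an effectively computable gap $g(L)>0$, and the \emph{resonant} ones, for which this quantity vanishes. If the pair produced by the argument is non-resonant, imposing $56L^2(R+1)r<g(L)$ gives a contradiction exactly as in Proposition~\ref{prop:1}, so the argument reduces to the resonant case $(2\pi)^2\tfrac{k^2+kl+l^2}{3}=L^2$. Following Rosier~\cite{rosier97}, such a pair is attached to a purely imaginary eigenvalue $\lambda_{k,l}$ of $A$, with three distinct real roots $\xi_0^*,\xi_1^*,\xi_2^*$ of $\xi^3-\xi+p_{k,l}$ ($p_{k,l}=i\lambda_{k,l}$) and a boundary-vanishing eigenfunction $\phi_{k,l}$ with $(\phi_{k,l})_x(0)=0$ and $\hat\phi_{k,l}(\xi)=i(e^{-iL\xi_0^*}-e^{-iL\xi})/(\xi^3-\xi+p_{k,l})$; finitely many such $\phi_{k,l}$ span the uncontrollable subspace $M$.

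In this case I would upgrade the conclusion to a quantitative proximity statement. The Vieta identities force the roots of $\xi^3-\xi+p$ to be $\xi_j^*+\mathcal O(r)$, hence $p=p_{k,l}+\mathcal O(r)$, and exact vanishing of the numerator at each such root together with $|\delta|,\|f\|_{L^2}<\gamma$ gives, after the normalization $\beta=1$, that $\alpha=e^{-iL\xi_0^*}+\mathcal O_{L,K_1}(r+\gamma)$. Comparing the two rational representations of $\hat u$ and $\hat\phi_{k,l}$ — poles agreeing up to $\mathcal O(r)$, residues up to $\mathcal O_{L,K_1}(r+\gamma)$, far field $\mathcal O(|\xi|^{-2})$ governed by $\delta,\hat f=\mathcal O(\gamma)$ — yields a scalar $c$ with $|c|\asymp 1$ and $\|u-c\,\phi_{k,l}\|_{L^2}\le C(L,K_1)(r+\gamma)$, hence $\mathrm{dist}_{L^2}(u,\mathbb C M)\le C(L,K_1)(r+\gamma)$. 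But $L^2=H\oplus M$ is a topological direct sum with $M$ finite-dimensional, so the projection $\Pi_H$ of $L^2$ onto $H$ along $M$ is bounded and $\mathrm{dist}_{L^2}(v,\mathbb C M)\ge\|v\|_{L^2}/\|\Pi_H\|$ for every $v\in\mathbb C H$; since $\|u\|_{L^2}=1$, this is impossible once $C(L,K_1)(r+\gamma)<1/\|\Pi_H\|$. Gathering all the smallness conditions — those inherited verbatim from Proposition~\ref{prop:1}, together with $r<\pi/(8L)$, $56L^2(R+1)r<g(L)$ and $C(L,K_1)(r+\gamma)<1/\|\Pi_H\|$ — fixes $\gamma=\gamma(L,K_1)$ and yields $\mathcal B_\gamma(K_1)=\emptyset$.

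The step I expect to be the main obstacle is the quantitative comparison: extracting, with effectively computable constants (deteriorating like $e^{LR}$), the bound $\|u-c\,\phi_{k,l}\|_{L^2}\lesssim_{L,K_1}r+\gamma$ from the fact that $\hat u$ and $\hat\phi_{k,l}$ are entire functions whose rational representations have nearly coinciding poles and residues. Some bookkeeping is also needed when $L^2=(2\pi)^2\tfrac{k^2+kl+l^2}{3}$ admits several representations, but the argument's $(k,l)$ is still one specific resonant pair, so $u$ is $L^2$-close to the single eigenfunction $\phi_{k,l}\in M$ and the conclusion is unchanged.
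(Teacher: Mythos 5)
Your proposal follows the same strategy the paper outlines in its brief remark after Propositions \ref{prop:1:H}--\ref{prop:2:H}: re-run the argument of Proposition \ref{prop:1} to find that $u$ is quantitatively close to an eigenfunction in $M$ (equivalently, that a perturbation $u_1$ with vanishing $(u_1)_x(0)$ nearly satisfies $Au_1=\lambda u_1$ for $\lambda\in S_L$), and then derive a contradiction with $u\in\mathbb{C}H$ using the transversality of $H$ and the finite-dimensional $M$. The paper's treatment of this step is essentially a sketch; your version makes the final contradiction explicit via the bounded projection $\Pi_H$ and correctly flags the quantitative pole/residue comparison as the part requiring care, so it is consistent with the paper's intended argument.
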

\begin{prop}\label{prop:2:H} 
There exist $\bar{K}_1(L)$ and $T_0(L)$ such that for any $\gamma>0$ there is $\epsilon = \epsilon(L, \gamma)>0$ effectively computable with the property that, if there are $u\in H\backslash\{0\},  K_1\geq \bar{K}_1(L)$,  and $T\geq T_0(L)$  satisfying
\begin{equation}
\int_0^T \big|\big(S(t)u\big)_x(t,0)\big|^2\,\,dt<\epsilon \|u\|_{L^2(0,L)}^2, \notag
\end{equation}
then $\mathcal{B}_{\gamma}$ is not empty. 
\end{prop}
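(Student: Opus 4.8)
The plan is to run, essentially verbatim, the argument used for Proposition~\ref{prop:2}, but with the Hilbert space $L^2(0,L)$ replaced throughout by the closed, $S(t)$-invariant subspace $(H,\|\cdot\|_{L^2})$. Two structural facts legitimise this substitution. First, by Rosier's decomposition $H$ is invariant under the flow $S(t)$; since $H$ is closed, for $f\in \dom(A)\cap H$ one has $Af=\lim_{h\to 0^+}h^{-1}(S(h)f-f)\in H$, so $A$ maps $\dom(A)\cap\mathbb{C}H$ into $\mathbb{C}H$. Second, because $H$ carries exactly the $L^2$ norm, every estimate in the preliminary section is inherited: the flux identities \eqref{fxbound} and \eqref{fluxine}, the smoothing Lemmas~\ref{thm-flow-con} and~\ref{thm-flow-smooth}, and---crucially for the termination of the iteration---the covering Lemma~\ref{lem:compact} and Corollary~\ref{cor:boundon steps}, since $\mathcal{A}\cap\mathbb{C}H\subseteq\mathcal{A}$ and hence any orthonormal family inside $\mathcal{A}\cap\mathbb{C}H$ still has at most $B(L,K)$ elements.

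With these in hand I would reproduce Step 0 through Step 5 of the proof of Proposition~\ref{prop:2}. Observations $(i)$--$(vii)$ there use only the flow, integration by parts and the flux hypothesis, so they hold unchanged for $f\in H$; in particular Observation $(vii)$ is unaffected because the projections are the $L^2$-orthogonal ones and, for $f\in H$ and $V\subset H$, the component $\Pi_V^\perp f=f-f_1$ with $f_1\in V\subset H$ again lies in $H$. The Gram--Schmidt iteration of Step 4 and Lemma~\ref{lem-cn} then produce, from a putative $u\in H\setminus\{0\}$ with small flux, orthonormal families $\{y_{in}\}\subset\mathcal{A}\cap\mathbb{C}H$; the same choice of parameters $0<\varepsilon_0\ll\delta_1\ll\dots\ll\delta_{B(L,K)+1}$ and the same explicit constants $\bar K_1(L)$, $T_0(L)$, $\varepsilon(L,\gamma)=\varepsilon_0(L,\gamma,K)$ work. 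Corollary~\ref{cor:boundon steps} forces the process to halt at some $m\le B(L,K)$, yielding $\{y_{im}\}_{1\le i\le m}$ that verify \eqref{cond-1}--\eqref{cond-5} inside $\mathbb{C}H$; the finite-dimensional map $\Pi_{\mathbb{C}V}A|_{\mathbb{C}V}$ with $V=\mathrm{span}\{y_{1m},\dots,y_{mm}\}\subset\mathbb{C}H$ then has a normalised eigenfunction $g\in\mathbb{C}H$ with $\|g\|_{H^3}\le K_1$, $g(0)=g(L)=g_x(L)=0$, $|g_x(0)|<\gamma$ and $\|\lambda g-g_x-g_{xxx}\|<\gamma$, i.e.\ $g\in\mathcal{B}_\gamma(K_1)$ in the sense of Proposition~\ref{prop:1:H}.

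The only genuinely new point---and therefore the single thing I would check carefully rather than merely cite---is that every intermediate object produced by the construction stays in $\mathbb{C}H$: the $S(\bar t_n)$-images stay in $\mathbb{C}H$ by invariance; $A$ applied to an element of $\dom(A)\cap\mathbb{C}H$ stays in $\mathbb{C}H$ by the remark above; each orthogonalisation step $\Pi^\perp_{\{y_1,\dots,y_k\}}z$ with $y_i,z\in\mathbb{C}H$ stays in $\mathbb{C}H$ since it equals $z$ minus a linear combination of the $y_i$; and the normalisation $\cL$ manifestly preserves $\mathbb{C}H$. I expect no substantial obstacle here: once the $S(t)$-invariance of $H$ and the ensuing invariance of $A$ on $\dom(A)\cap\mathbb{C}H$ are recorded, the boundary-trace estimates, the parameter bookkeeping of Step 5, and the explicit values of all constants are literally the same as in the non-critical case, so the proof reduces to this invariance observation together with the proof of Proposition~\ref{prop:2}.
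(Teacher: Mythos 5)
Your argument is correct and is exactly the route the paper itself takes: Section~\ref{sec-critical} disposes of Proposition~\ref{prop:2:H} with the single remark that $H$ and $M$ are $S(t)$-invariant, so replacing $L^2$ by $(H,\|\cdot\|_{L^2})$ throughout the proof of Proposition~\ref{prop:2} carries over verbatim. You merely make explicit the invariance checks (that $A$, $S(\bar t_n)$, Gram--Schmidt projections and normalisation all preserve $\mathbb{C}H$, and that Corollary~\ref{cor:boundon steps} still applies since $\mathcal{A}\cap\mathbb{C}H\subseteq\mathcal{A}$) that the paper leaves implicit.
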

Let us comment on Proposition \ref{prop:1:H}.  Define a set of eigenfunctions
\[S_L:=\{\lambda; \lambda u= Au, u\in M\},\]
then following the proof of Proposition \ref{prop:1} we are able to give some computable and small $\gamma$ such that,  if $u\in \mathcal{B}_{\gamma}$ then we can find some $u_1$ and some $\lambda\in S_L$ verifying
\[\,u_1(0) = u_1(L) = (u_1)_x(L)=(u_1)_x(0)=0,\,\|\lambda u_1 - (u_1)_x - (u_1)_{xxx}\|_{L^2}<\gamma_1,\]
where $\gamma_1= \gamma_1(\gamma)$ can be sufficiently small if $\gamma$ is.
This can be considered as perturbation of $M$, thus contradicts the fact that $u\in H$ by assuming $\gamma$ small than a certain  computable value.

\section{Further comments and questions}
This is a quantitative way of characterizing observability constant,   mainly based on  flux observations and   strong smoothing effects of the initial boundary value problem, $e.g.$ Observations $(i)$--$(vii)$ and Lemma \ref{thm-flow-smooth} in our case.  We believe that this method can be  applied to many other models.  Moreover, it is also of great interests to consider the following further questions.
\\

\noindent$\bullet$  \textbf{Observability constant behavior around critical points}\\
Let $L_0\in \mathcal{N}$.   Our method gives a finite constant $c(L_0)>0$, while it also provides a vanishing sequence $\{c(L)\}_{L\rightarrow L_0}$:  $c(L)\rightarrow 0^+$.  Apparently, this difference, $e.g.$ the ``jump'' of the observability, comes from the uncontrollable subspace $M$.    Thus when the length is not critical it is natural to ask for the existence of subspace, comparing to $M$, such that the observability constant of the quotient space is continuous.  In other words, is that possible to find  some finite dimensional space $M_L$ for $L$ near $L_0$ such that the related ``observability'' of $L^2/M_L$, which is denoted  by $c_{H_L}(L)$, satisfies  $c_{H_L}(L)\rightarrow c(L_0)$.
\\

\noindent$\bullet$  \textbf{Optimal estimates}\\
According to  the duality between controllability and observability,  the sharp observability inequality constant is also the  optimal control cost.    This optimal estimate is of both mathematical and engineering interests, as stated in Introduction, it is the fundamental result for many other studies upon this model.  However, it does not seem that the value that we obtain in this paper is optimal. Therefore, it would be interesting to further get sharp estimates of $c(L)$. 
\\

\noindent$\bullet$  \textbf{Observability inequality for small time}\\
On account of  to the smoothing procedure $S(t_1(L, K_1))$ and Lemma \ref{lem:compact}, our constructive approach and quantitative result only apply for large time, $i.e.$ $T$ bigger than some  $T_0(L, K_1)$.  It is not clear whether some modifications and minimizations on our method can make the time small.   What is the behavior of the constant  when $T$ tends to $0$, and what is the sharp asymptotic estimate? Should the cost be like  $Ce^{-\frac{c}{T^{\alpha}}}$,  as it is the case for many other models \cite{Zuazua-cost-2011, Lebeau-sharp-2016}?  
\\ 

\noindent$\bullet$  \textbf{Is backstepping another option?}\\
Originally introduced to  stabilize system exponentially \cite{coron-1991-adding-integral, 2008-Krstic-Smyshlyaev-book}, recently it is further developed as a tool for  null control and small-time stabilization problems \cite{2017-Coron-Nguyen-ARMA, 2019-xiang-SICON, 2017-Xiang-SCL, Coron-Xiang-2018, ZhangRapidStab}, the so called piecewise backstepping, which shares the advantage that the feedback (control) is well formulated.  It consists in stabilizing system with arbitrary exponential decay rate  (rapid stabilization) with explicit computable estimates,  and splitting the time interval into infinite many parts such that on each part backstepping exponential stabilization is applied to make the energy  divide at least by 2.     Concerning our KdV case, at least for non-critical cases,  rapid stabilization by  backstepping  is achieved in  \cite{coronluqi},  where they used the controllability of KdV equation with  control of the form $b(t)=u_x(t, 0)-u_x(t, L)$  as an intermediate step.    If it is possible to perform piecewise backstepping by obtaining $Ce^{c \lambda^{\alpha}}$ type estimates on each step, then we are able to get null controllability and small time stabilization with precise cost estimates.

\appendix
\section{The $L\leq 4$ case.}\label{appL4}
Let us consider the flow $y(t)=S(t)y_0$.  Integration by parts and \eqref{fxbound} \eqref{fluxine} show
\begin{gather}
T\int_0^L y_0^2(x)\,dx \leq\int_0^T\int_0^L y^2(t, x) \,dx\,dt+ T\int_0^Ty_x^2(t, 0)\,dt,     \notag
\end{gather}
then Poincare's inequality lead to 
\begin{gather}
T\int_0^L y_0^2(x)\,dx \leq\frac{L^2}{\pi^2}\int_0^T\int_0^L y_x^2(t, x) \,dx\,dt+ T\int_0^Ty_x^2(t, 0)\,dt,     \notag
\end{gather}    
 which combines with \eqref{fxbound} yield 
 \begin{equation}
 \left(T-\frac{L^2(T+L)}{3\pi^2}\right)\int_0^L y_0^2(x)\,dx \leq T\int_0^Ty_x^2(t, 0)\,dt.     \notag
 \end{equation}
Consequently, when $T$ and $L$ satisifes $\frac{L^3}{3T\pi^2}+\frac{L^2}{3\pi^2}<1$,  the observability constant can be  $\frac{3T\pi^2}{3T\pi^2-L^3-TL^2}$.  

\section{Sobolev estimates and some properties of the flow}\label{app-est}
We start from giving some quantitative   Sobolev embedding and interpolation estimates.  In the literature these bounds are usually simply provided by some unknown constant $C$, for example Brezis \cite{Brezis-book} and Adams \cite{Adams-book}, though ideas of getting which are well illustrated.    

For any $\xi\in (0, L/3), \eta\in (2L/3, L)$, there exists $\lambda\in (\xi, \eta)$ such that 
\begin{equation}
|f'(\lambda)|= \left|\frac{f(\eta)-f(\xi)}{\eta-\xi}\right|\leq \frac{3}{L}\left(|f(\eta)|+|f(\xi)|\right). \notag
\end{equation}
Therefore, $\forall x\in (0, L)$, 
\begin{equation}
|f'(x)|= \left|f'(\lambda)+ \int_{\lambda}^x f''(t)\,dt\right|\leq  \frac{3}{L}\left(|f(\eta)|+|f(\xi)|\right)+ \int_0^L |f''(t)| \,dt,  \notag
\end{equation}
then we integrate $\xi$ on $(0, L/3)$ and $\eta$ on $(2L/3, L)$ to get 
\begin{equation}
|f'(x)|\leq \frac{9}{L^2}\int_0^L |f(t)| \,dt+ \int_0^L |f''(t)| \,dt.  \notag
\end{equation}
Hence,
\begin{equation}\label{120-L}
\int_0^L |f'(x)|^2 \,dx\leq \frac{162}{L^2}\int_0^L |f(t)|^2 \,dt+ 2L^2\int_0^L |f''(t)|^2 \,dt. 
\end{equation}
Because  for any $\delta\in (0, L^2]$ there exists $n\in \mathbb{N}$ such that $L/n\in [\delta^{1/2}/2, \delta^{1/2}]$,  we can split $[0, L]$ by $n$ parts.  By performing \eqref{120-L} on each part and combining them together, we get
\begin{equation}\label{delta--120}
\int_0^L |f'(x)|^2 \,dx\leq  2\delta\int_0^L |f''(t)|^2 \,dt+ \frac{648}{\delta}\int_0^L |f(t)|^2 \,dt, \; \forall \delta\in (0, 16], \notag
\end{equation}
thus
\begin{equation}\label{delta120}
\int_0^L |f'(x)|^2 \,dx\leq  42\left(\delta\int_0^L |f''(t)|^2 \,dt+ \frac{1}{\delta}\int_0^L |f(t)|^2 \,dt\right), \; \forall \delta\in (0, 1].
\end{equation}
Notice that \eqref{delta120} also holds for  complex valued functions.  By replacing $f$ by $f^{(n)}$, we also get
\begin{equation}\label{ndelta120}
\int_0^L |f^{(n+1)}(x)|^2 \,dx\leq  42\left(\delta\int_0^L |f^{(n+2)}(t)|^2 \,dt+ \frac{1}{\delta}\int_0^L |f^{(n)}(t)|^2 \,dt\right), \; \forall \delta\in (0, 1]. 
\end{equation}
Moreover,  we are able to find a constant $E_m^n$ which only depends on $m$ and $n$ such that 
\begin{equation}\label{ineemb}
\int_0^L |f^{(n)}(x)|^2 \,dx\leq  E_m^n\left(\delta^{m-n}\int_0^L |f^{(m)}(t)|^2 \,dt+ \delta^{-n}\int_0^L |f(t)|^2 \,dt\right), \; \forall \delta\in (0, 1],
\end{equation}
while, more precisely,  $E^n_m$ can be calculated by
\begin{gather}
E^1_2= 42,\\
E^m_{m+1}= 2^{m}  42^m (E^{m-1}_{m})^{m}, \label{Emm1}\\
E^{k-1}_m= E^{k-1}_k(E^k_m+1). \label{Ek1m}
\end{gather}
For ease of notations, we denote 
\begin{equation}
a_n:= \int_0^L |f^{(n)}(x)|^2 \,dx= \lVert f\lVert_{\dot{H}^n}^2 \textrm{ and }   \lVert f\lVert_{H^n}^2= \lVert f\lVert_{\dot{H}^n}^2+ \lVert f\lVert_{L^2}^2.\notag
\end{equation}
In fact, $E^1_2= 42$ as shown in \eqref{delta120}, further estimated are obtained from a reduction procedure on $m$.  
Suppose that $E^n_i$ with $i\leq m$ are known, then from \eqref{ndelta120} and \eqref{ineemb} we derive
\begin{gather}
a_{m-1}\leq E^{m-1}_m\left(\delta_1 a_m+ \delta_1^{-(m-1)}a_0 \right),  \; \forall \delta_1\in (0, 1], \notag\\
a_{m}\leq E^{1}_2\left(\delta a_{m+1}+ \delta^{-1}a_{m-1} \right),  \; \forall \delta\in (0, 1]. \notag
\end{gather}
By taking  $\delta_1:= \delta/(2E_2^1 E_m^{m-1})$, we obtain
\begin{equation}
a_m\leq 2 E_2^1 \left(\delta a_{m+1}+ 2^{m-1} (E^1_2)^{m-1}(E_m^{m-1})^{m} \delta^{-m} a_0 \right), \notag
\end{equation}
which concludes \eqref{Emm1}. As for \eqref{Ek1m},  we perform \eqref{ndelta120} and \eqref{ineemb} once again to get,  for $k\leq m$, 
\begin{align*}
a_{k-1}&\leq E_k^{k-1}\left(\delta a_k+ \delta^{-(k-1)}a_0\right),\\
&\leq E_k^{k-1}\left(\delta E^k_{m+1}(\delta^{m+1-k}a_{m+1}+ \delta^{-k}a_0)+ \delta^{-(k-1)}a_0\right),\\
&\leq E_k^{k-1}(E_{m+1}^k+1)\left(\delta^{m+2-k}a_{m+1}+ \delta^{-(k-1)}a_0\right).
\end{align*} 

By taking $\delta:= (a_0/(a_0+a_m))^{1/m}$ in \eqref{ndelta120}, we get 
\begin{align*}
a_n&\leq E^n_m \left((\frac{a_0}{a_0+a_m})^{\frac{m-n}{m}}a_m+(\frac{a_0+a_m}{a_0})^{\frac{n}{m}}a_0\right),\\
&\leq 2E^n_m a_0^{\frac{m-n}{m}} (a_0+a_m)^{\frac{n}{m}}.
\end{align*}
This implies that 
\begin{equation}
\lVert f\lVert_{\dot{H}^n}^2\leq 2E^n_m\lVert f\lVert_{L^2}^{\frac{2(m-n)}{m}} \lVert f\lVert_{H^m}^{\frac{2n}{m}},     \notag
\end{equation}
thus
\begin{equation}
\lVert f\lVert_{H^n}^2\leq (2E^n_m+1)\lVert f\lVert_{L^2}^{\frac{2(m-n)}{m}} \lVert f\lVert_{H^m}^{\frac{2n}{m}}, \forall\; 0<n<m.     \notag
\end{equation}
\linebreak

Now we turn to the proof of Lemma \ref{lem:compact} and Corollary \ref{cor:boundon steps} .   Actually, assuming  Lemma \ref{lem:compact}, for any $g_i$ there exists $f_{n_i}$ such that 
\[ 
\|g_i-f_{n_i}\|_{L^2}<\frac{\sqrt{2}}{2}.  
\]
Suppose that $P>R$, then there exists $i\neq j$ such that $f_{n_i}=f_{n_j}$, contradiction, as 
\[ 
\sqrt{2}= \|g_i-g_j\|_{L^2}\leq   \|g_i-f_{n_i}\|_{L^2}+ \|g_j-f_{n_i}\|_{L^2}< \sqrt{2}.
\]
It remains to prove Lemma \ref{lem:compact} which is of course a direct consequence of Rellich's theorem.  In fact, as the injection $H^3\hookrightarrow L^2$ is compact, it suffices to find a finite open cover  composed by the union of balls with radius $\sqrt{2}/2$.   By this way,  $f_i$ can be chosen in  $\mathcal{A}$. However, one does not know the exact value of covering balls.  Instead, we present a constructive proof, which explicitly characterize the value of  $B$. 

 Notice that if $f\in \mathcal{A}$ then $f$ satisfies, $f\in H$ and $f(0)=f(L)=0$, which means  
\[
f= \sum_{n\in \mathbb{N}^*} a_n \sin \left( \frac{n\pi x}{L}\right) \textrm{ in } H^1,
\]
with its $H^1$ norm  given by 
\begin{equation}
\|f\|_{\dot{H}^1}^2= \sum_{n\in \mathbb{N}^*} a_n^2  \frac{n^2\pi^2}{2L}, \;\|f\|_{L^2}^2= \sum_{n\in \mathbb{N}^*} a_n^2  \frac{L}{2}.     \notag
\end{equation}
Thanks to \eqref{ineemb} and the definition of $\mathcal{A}$,
\begin{equation}
\|f\|_{\dot{H}^1}^2\leq E^1_3 \left(\int_0^L \|f^{(3)}(x)\|^2+\|f(x)\|^2 \,dx\right)= E^1_3 \|f\|_{H^3}^2,     \notag
\end{equation}
then 
\begin{equation}\label{anH1bound}
 \sum_{n\in \mathbb{N}^*} a_n^2  \frac{n^2\pi^2}{2L}\leq E^1_3 K^2,
\end{equation}
thus 
\begin{equation}
 a_n\leq \frac{K \sqrt{2L E^1_3}}{n\pi}.     \notag
\end{equation}

Next, we pick up all the functions of the following form, which are denoted by $\{f_m\}$, 
\begin{gather}
f_m= \sum_{n=1}^{N_c-1} a_n^m \sin \left( \frac{n\pi x}{L}\right),     \notag\\
|a_n^m|\in \left\lbrace0, \frac{K \sqrt{2L E^1_3}}{n\pi}\cdot \frac{1}{M_c}, \frac{K \sqrt{2L E^1_3}}{n\pi}\cdot \frac{2}{M_c}, ..., \frac{K \sqrt{2L E^1_3}}{n\pi} \cdot \frac{M_c}{M_c}\right\rbrace,     \notag
\end{gather}
where $N_c$ and $M_c$ are some integers only depend on $L$  to be chosen later on. 

It can be proved that with a good choice of $N_c$ and $M_c$ the above sequence $\{f_m\}$ satisfies Lemma \ref{lem:compact}.  Clearly, $f_m\in C^{\infty}\subset H^3([0, L])$.  Let $f\in \mathcal{A}$. On the one hand, thanks to the above construction, there exists a function $f_m$ such that 
\begin{equation}
|a_n^m-a_n|< \frac{K \sqrt{2L E^1_3}}{n\pi}\cdot \frac{1}{2M_c}, \forall n\in \{1, 2,..., N_c-1\}.     \notag
\end{equation}
Hence 
\begin{align*}
\sum_{n=1}^{N_c-1} (a_n^m-a_n)^2\cdot \frac{L}{2}< \frac{L}{2} \sum_{n=1}^{N_c-1} \frac{LE^1_3 K^2}{2M_c^2 n^2\pi^2}\leq \frac{L^2}{4}\cdot \frac{1}{6}\cdot \frac{E^1_3 K^2}{M_c^2}.
\end{align*}
On the other hand, we know from \eqref{anH1bound} that
\begin{equation}
\sum_{n\geq N_c} (a_n^m-a_n)^2\cdot \frac{L}{2}\leq E^1_3 K^2 \frac{L^2}{N_c^2}.     \notag
\end{equation}
Therefore, we can choose $M_c$ and $N_c$ as
\begin{equation}
M_c= M_c(L)= \Big{\lceil} KL\sqrt{\frac{E^1_3}{6}}\Big{\rceil}, \; N_c= N_c(L)=\Big{\lceil} 2KL\sqrt{E^1_3}\Big{\rceil},     \notag
\end{equation}
which yields
\begin{equation}
\|f_m-f\|_{L^2}^2= \sum_{n\in \mathbb{N}^*} (a_n-a_n^m)^2  \frac{L}{2}< \frac{1}{2}.     \notag
\end{equation}
In such a case, the value of $B(L, K)$ is given by $(2M_c+1)^{N_c-1}$.
\linebreak

\begin{proof}[Proof of Lemma \ref{thm-flow-con}]
$(i)$.  \textit{Case $k=0$}. 
It is a  straightforward consequence of \eqref{fxbound}--\eqref{fluxine} that 
\begin{equation}
F_0^0= 1, \; F_1^0= \sqrt{5L/3}.     \notag
\end{equation}

$(ii)$. \textit{Case $k=3$}. Suppose that $f_0\in L^2$, then as $f$ satisfies
\begin{gather*}
f_t(t, x)= Af(t, x),  t\in (0, T), x\in (0, L) \\
f(t, 0)=f(t, L)=f_x(t, L)=0,  t\in (0, T)\\
f(0, x)= f_0(x), x\in (0, L),
\end{gather*} 
we know that $g:= f_t= Af$ is the solution of 
\begin{gather*}
g_t(t, x)= Ag(t, x), \; t\in (0, T), x\in (0, L) \\
g(t, 0)=g(t, L)=g_x(t, L)=0,  \;  t\in (0, T)\\
g(0, x)= g_0(x):= (Af_0)(x),\;   x\in (0, L).
\end{gather*} 
Since
\[
\|u_{x}\|_{L^2}^2\leq E^1_3(\delta^2\|u_{xxx}\|_{L^2}^2+ \delta^{-1}\|u\|_{L^2}^2),
\]
we have
\[
\|u_{x}\|_{L^2}\leq \sqrt{E^1_3}(\delta\|u_{xxx}\|_{L^2}+ \delta^{-1/2}\|u\|_{L^2}).
\]
Therefore, by choosing $\delta:= 1/\sqrt{4 E^1_3}$ we get
\[
\|u_{x}\|_{L^2}\leq \frac{1}{2}\|u_{xxx}\|_{L^2}+(4 E^1_3)^{3/4}\|u\|_{L^2},
\]
which implies
\begin{align*}
\|u_{xxx}\|_{L^2}&\leq  \|Au\|_{L^2}+ \|u_x\|_{L^2},\\
&\leq \|Au\|_{L^2}+ \frac{1}{2}\|u_{xxx}\|_{L^2}+(4 E^1_3)^{3/4}\|u\|_{L^2},\\
&\leq 2\|Au\|_{L^2}+ 2 (4 E^1_3)^{3/4}\|u\|_{L^2}
\end{align*}
and
\begin{equation}
\|Au\|_{L^2}\leq \|u_{xxx}\|_{L^2}+  \|u_x\|_{L^2}\leq (1+\sqrt{E^1_3})\|u\|_{H^3}.
\end{equation}

Thanks to  the result in the case k=0, we have
\begin{equation}
\|f_{x}\|_{L^2(0, T; L^2)}\leq \sqrt{\frac{2L}{3}}\|f_0\|_{L^2}, \; \|f(t)\|_{L^2}\leq \|f_0\|_{L^2}     \notag
\end{equation}
and, by replacing $f$ by $g$,
\begin{gather*}
\int_0^T\int_0^L g_x^2(t, x)\,dx \,dt\leq  \frac{2L}{3}\int_0^L g_0^2(x)\,dx, \\
\int_0^L g^2(t, x)\,dx\leq \int_0^L g_0^2(x)\,dx,
\end{gather*}
which implies
\begin{gather*}
\|Af(t)\|_{L^2}\leq  \|Af_0\|_{L^2}\leq (1+ \sqrt{E^1_3})\|f_0\|_{H^3}, \\
\|A(f_x)\|_{L^2(0, T; L^2)}= \|(Af)_x\|_{L^2(0, T; L^2)}\leq \sqrt{\frac{2L}{3}}(1+ \sqrt{E^1_3})\|f_0\|_{H^3}.
\end{gather*}
Hence,
\begin{align*}
\|f_{xxx}(t)\|_{L^2}&\leq 2(1+ \sqrt{E^1_3})\|f_0\|_{H^3}+ 2 (4 E^1_3)^{3/4}\|f(t)\|_{L^2},\\
&\leq 2(1+ \sqrt{E^1_3})\|f_0\|_{H^3}+ 2 (4 E^1_3)^{3/4}\|f_0\|_{L^2}, \\
\|f_{x}(t)\|_{L^2}&\leq  \frac{1}{2}\|f_{xxx}(t)\|_{L^2}+(4 E^1_3)^{3/4}\|f(t)\|_{L^2}, \\
&\leq (1+ \sqrt{E^1_3})\|f_0\|_{H^3}+ 2 (4 E^1_3)^{3/4}\|f_0\|_{L^2}, \\
\|f_{xxxx}\|_{L^2(0, T; L^2)}&\leq 2\|A(f_x)\|_{L^2(0, T; L^2)}+ 2 (4 E^1_3)^{3/4}\|f_x\|_{L^2(0, T; L^2)}, \\
&\leq 2\sqrt{\frac{2L}{3}}(1+ \sqrt{E^1_3})\|f_0\|_{H^3}+ 2 \sqrt{\frac{2L}{3}} (4 E^1_3)^{3/4}\|f_0\|_{L^2},
\end{align*}
thus
\begin{align*}
\|S(t)f_0\|_{C([0, T]; H_{(0)}^3(0, L))}&\leq \left(2(1+ \sqrt{E^1_3})+ 2 (4 E^1_3)^{3/4}+1\right) \|f_0\|_{H_{(0)}^3(0, L)},\\
\|S(t)f_0\|_{L^2(0, T; H_{(0)}^{4}(0, L))}&\leq    \left(2\sqrt{\frac{2L}{3}}(1+ \sqrt{E^1_3})+ 2 \sqrt{\frac{2L}{3}} (4 E^1_3)^{3/4}+ \sqrt{L}\right)\|f\|_{H_{(0)}^3},
\end{align*}
which gives the value of $F^3_i$:
\begin{align}
F^3_0&= 2(1+ \sqrt{E^1_3})+ 2 (4 E^1_3)^{3/4}+1,\\
F^3_1&= 2\sqrt{\frac{2L}{3}}(1+ \sqrt{E^1_3})+ 2 \sqrt{\frac{2L}{3}} (4 E^1_3)^{3/4}+ \sqrt{L}.
\end{align}

$(iii)$.  \textit{Case $k=6$}.   Suppose that $f_0\in H_{(0)}^{6}(0, L)$, then  $g:=f_t=Af$ satisfies 
\begin{gather*}
g_t(t, x)= Ag(t, x), \; t\in (0, T), x\in (0, L) \\
g(t, 0)=g(t, L)=g_x(t, L)=0,  \;  t\in (0, T)\\
g(0, x)= g_0(x):= (Af_0)(x),\;   x\in (0, L),
\end{gather*} 
and $h:=g_t=Ag=A^2 f$ satisfies
\begin{gather*}
h_t(t, x)= Ah(t, x), \; t\in (0, T), x\in (0, L) \\
h(t, 0)=h(t, L)=h_x(t, L)=0,  \;  t\in (0, T)\\
h(0, x)= h_0(x):= (Af_0)(x)= (A^2f_0)(x),\;   x\in (0, L).
\end{gather*} 
Simple embedding estimate shows
\begin{gather}
\|u^{(4)}\|_{L^2}\leq \frac{1}{4}\|u^{(6)}\|_{L^2}+ 16(E^4_6)^{3/2}\|u\|_{L^2},\; \|u^{(4)}\|_{L^2}\leq \sqrt{E^4_6} \|u\|_{H^6}, \notag\\
\|u^{(2)}\|_{L^2}\leq \frac{1}{4}\|u^{(6)}\|_{L^2}+ 2(E^2_6)^{3/4}\|u\|_{L^2},\; \|u^{(2)}\|_{L^2}\leq  \sqrt{E^2_6} \|u\|_{H^6}. \notag
\end{gather}
It is known from the case $k=0$ that
\begin{gather}
\|f^{(6)}(t)+2f^{(4)}(t)+f^{(2)}(t)\|_{L^2}= \|h(t)\|_{L^2}\leq \|h_0\|_{L^2},     \notag\\
\|f^{(7)}+2f^{(5)}+f^{(3)}\|_{L^2(0, T; L^2)}= \|h_x\|_{L^2(0, T; L^2)}\leq \sqrt{\frac{2L}{3}}\|h_0\|_{L^2},     \notag
\end{gather}
which, combined with the preceding embedding estimates, yields
\begin{align*}
\|f^{(6)}(t)\|_{L^2}&\leq 4\|h_0\|_{L^2}+ 128(E^4_6)^{3/2}\|f\|_{L^2}+ 8(E^2_6)^{3/4}\|f\|_{L^2},\\
&\leq \left(8\sqrt{E^4_6}+ 4\sqrt{E^2_6}+ 128(E^4_6)^{3/2}+ 8(E^2_6)^{3/4}\right)\|f_0\|_{H^6},
\end{align*}
and
\begin{align*}
\|f^{(7)}\|_{L^2(0, T; L^2)}&\leq 4\|h_x\|_{L^2(0, T; L^2)}+ 128(E^4_6)^{3/2}\|f_x\|_{L^2(0, T; L^2)}+ 8(E^2_6)^{3/4}\|f_x\|_{L^2(0, T; L^2)},\\
&\leq 4\sqrt{\frac{2L}{3}} \|h_0\|_{L^2}+ \sqrt{\frac{2L}{3}}\left(128(E^4_6)^{3/2}+ 8(E^2_6)^{3/4}\right)\|f_0\|_{L^2},\\
&\leq \left(8\sqrt{\frac{2L}{3}}(E^4_6)^{1/2}+ 4\sqrt{\frac{2L}{3}}(E^2_6)^{1/2}+ 128\sqrt{\frac{2L}{3}}(E^4_6)^{3/2}+  8\sqrt{\frac{2L}{3}}(E^2_6)^{3/4}\right).
\end{align*}
Thus, the value of $F^6_i$ can be chosen as
\begin{align*}
F^6_0&= 8\sqrt{E^4_6}+ 4\sqrt{E^2_6}+ 128(E^4_6)^{3/2}+ 8(E^2_6)^{3/4}+1,\\
F^6_1&= 8\sqrt{\frac{2L}{3}}(E^4_6)^{1/2}+ 4\sqrt{\frac{2L}{3}}(E^2_6)^{1/2}+ 128\sqrt{\frac{2L}{3}}(E^4_6)^{3/2}+  8\sqrt{\frac{2L}{3}}(E^2_6)^{3/4}+ \sqrt{L}.
\end{align*}

$(iii)$.  \textit{Case $k=1, 2, 4,5$}.
It can be achived by  the (real) interpolation of Sobolev spaces.  To avoiding getting too much involved into this classical theory, we directly use some quantitative results in \cite{sobolev-quantitative}, and following several related notations there.
\begin{equation}
\|u\|_{\mathcal{H}^m(\mathbb{R})}^2:= \sum_{\alpha\leq m}\binom{m}{\alpha}\|\partial^{\alpha}u\|^2_{L^2(\mathbb{R})},\;
\|u\|_{\mathcal{H}^m(0, L)}^2:= \sum_{\alpha\leq m}\binom{m}{\alpha}\|\partial^{\alpha}u\|^2_{L^2(0, L)}, \notag
\end{equation}
the interpolation spaces as well as their norms are given by  K-method, 
\begin{align*}
\overline{H^1}= \big(\mathcal{H}^0(0, L), \mathcal{H}^3(0, L)\big)_{\frac{1}{3}},&\; \overline{H^2}= \big(\mathcal{H}^0(0, L), \mathcal{H}^3(0, L)\big)_{\frac{2}{3}}, \notag\\
\overline{H^4}= \big(\mathcal{H}^3(0, L), \mathcal{H}^6(0, L)\big)_{\frac{1}{3}},&\; \overline{H^5}= \big(\mathcal{H}^3(0, L), \mathcal{H}^6(0, L)\big)_{\frac{2}{3}},\notag
\end{align*}
{\small
\begin{align*}
 \overline{L^2H^2}:= \big(L^2(0, T;\mathcal{H}^1(0, L)), L^2(0, T; \mathcal{H}^4(0, L)\big)_{\frac{1}{3}},& \; \overline{L^2H^3}:= \big(L^2(0, T;\mathcal{H}^1(0, L)), L^2(0, T; \mathcal{H}^4(0, L)\big)_{\frac{2}{3}}, \notag\\
 \overline{L^2H^5}:= \big(L^2(0, T;\mathcal{H}^4(0, L)), L^2(0, T; \mathcal{H}^7(0, L)\big)_{\frac{1}{3}},& \; \overline{L^2H^6}:= \big(L^2(0, T;\mathcal{H}^4(0, L)), L^2(0, T; \mathcal{H}^7(0, L)\big)_{\frac{2}{3}}.\notag
\end{align*}}
Then we have the following lemma concerning these interpolation spaces.
\begin{lemma}\label{lem-inter-cite}
($\textit{I}$) There exists an extension $\mathcal{E}$ and constants $\lambda_m= \lambda_m(L)$ such that 
\begin{gather}
\mathcal{E}: \mathcal{H}^m(0, L)\rightarrow\mathcal{H}^m(\mathbb{R}), \notag\\
\|u\|_{\mathcal{H}^m(0, L)}\leq \|\mathcal{E}u\|_{\mathcal{H}^m(\mathbb{R})}\leq \lambda_m \|u\|_{\mathcal{H}^m(0, L)}, \forall m\in \{0,1,2, 3,4,5, 6,7\}.\notag
\end{gather}\\($\textit{II}$) The norms $\mathcal{H}^m(0, L)$ and $\overline{H^m}$ are equivalent: 
\begin{gather}\label{interpo-H}
(\lambda^0)^{-\frac{2}{3}}(\lambda^3)^{-\frac{1}{3}}\|u\|_{\mathcal{H}^1(0, L)}\leq \|u\|_{\overline{H^1}}\leq \|u\|_{\mathcal{H}^1(0, L)},
\end{gather}
moreover, 
\begin{gather}\label{interpo-LH}
(\lambda^1)^{-\frac{2}{3}}(\lambda^4)^{-\frac{1}{3}}\|u\|_{L^2(0, T;\mathcal{H}^2(0, L))}\leq \|u\|_{\overline{L^2H^2}}\leq \|u\|_{L^2(0, T;\mathcal{H}^2(0, L))}.
\end{gather}
Similar results hold for $\overline{H^m}$ and $\overline{L^2H^{m+1}}$ when $m\in \{2, 4, 5\}$.\\

\noindent ($\textit{III}$) There exist constants $G^m$ such that
\begin{equation}
\|u\|_{H^m(0, L)}\leq \|u\|_{\mathcal{H}^m(0, L)}\leq G^m\|u\|_{H^m(0, L)}, \forall m\in \{0, 1,2,3,4,5,6, 7\}.\notag
\end{equation}

\end{lemma}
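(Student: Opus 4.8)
The plan is to establish the three assertions in the order (III), (I), (II): part (III) is an elementary consequence of the interpolation inequality already proved in Appendix \ref{app-est}, part (I) is a self‑contained reflection construction, and part (II) builds on both of these together with a transfer from the whole line. For part (III), the left inequality $\|u\|_{H^m(0,L)}\leq\|u\|_{\mathcal{H}^m(0,L)}$ is immediate, since $\binom{m}{0}=\binom{m}{m}=1$ and every term of $\sum_{\alpha\leq m}\binom{m}{\alpha}\|\partial^\alpha u\|_{L^2}^2$ is nonnegative. For the reverse inequality I would bound each intermediate derivative, $0<\alpha<m$, by \eqref{ineemb} with $\delta=1$, giving $\|\partial^\alpha u\|_{L^2}^2\leq E^\alpha_m\big(\|\partial^m u\|_{L^2}^2+\|u\|_{L^2}^2\big)\leq E^\alpha_m\|u\|_{H^m(0,L)}^2$; summing against the binomial weights then shows one may take $G^m=\big(1+\sum_{0<\alpha<m}\binom{m}{\alpha}E^\alpha_m\big)^{1/2}$, with the $E^\alpha_m$ supplied by the recursion \eqref{Emm1}--\eqref{Ek1m}.

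For part (I), I would fix a single universal extension valid for all $m\leq 7$ simultaneously: reflect $u$ across each of the two endpoints of $[0,L]$ by a Hestenes--Seeley formula whose $8$ coefficients are chosen so that derivatives up to order $7$ match, and then multiply by a fixed cutoff $\chi\in C_c^\infty(\R)$ with $\chi\equiv 1$ on $[0,L]$. Writing $\mathcal{E}u$ for the result, the lower bound $\|u\|_{\mathcal{H}^m(0,L)}\leq\|\mathcal{E}u\|_{\mathcal{H}^m(\R)}$ holds with constant $1$ because $\mathcal{E}u$ restricts to $u$ on $(0,L)$. For the upper bound one differentiates the explicit formula for $\mathcal{E}u$ at most $m$ times and applies the Leibniz rule, so that $\lambda_m$ comes out as an explicit expression in $L$, in the (finitely many, $m$‑independent) reflection coefficients, and in the sup‑norms of $\chi',\dots,\chi^{(m)}$. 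This is exactly the bookkeeping recorded in \cite{sobolev-quantitative}, which I would cite for the constants.

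Part (II) is the substantive step. The starting observation is that on the whole line the binomial theorem gives $\sum_{\alpha\leq m}\binom{m}{\alpha}|\xi|^{2\alpha}=(1+\xi^2)^m$, so by Plancherel $\|u\|_{\mathcal{H}^m(\R)}^2=\int_{\R}(1+\xi^2)^m|\hat u(\xi)|^2\,d\xi$; hence each $\mathcal{H}^m(\R)$ is a weighted $L^2$ space, and a direct computation of the $K$‑functional of such a couple yields the classical identity $\big(\mathcal{H}^{m_0}(\R),\mathcal{H}^{m_1}(\R)\big)_{\theta,2}=\mathcal{H}^{(1-\theta)m_0+\theta m_1}(\R)$, with norms equal once the real interpolation functor is normalized as in \cite{sobolev-quantitative}. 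I would then transfer this to the interval via the coretraction $\mathcal{E}$ of part (I) and the restriction $R$, which satisfy $R\mathcal{E}=\mathrm{Id}$, $\|R\|\leq 1$ at both endpoints, and $\|\mathcal{E}\|\leq\lambda_{m_i}$ at the endpoints; interpolating these bounded operators gives, on the interpolated spaces, $\|R\|\leq 1$ and $\|\mathcal{E}\|\leq\lambda_{m_0}^{1-\theta}\lambda_{m_1}^{\theta}$. The lower estimate in \eqref{interpo-H} then follows from $\|u\|_{\mathcal{H}^1(0,L)}\leq\|\mathcal{E}u\|_{\mathcal{H}^1(\R)}=\|\mathcal{E}u\|_{(\mathcal{H}^0(\R),\mathcal{H}^3(\R))_{1/3,2}}\leq\lambda_0^{2/3}\lambda_3^{1/3}\|u\|_{\overline{H^1}}$, while the upper estimate, with constant $1$, is the easy half, coming from the definition of the interpolation norm together with the fact that $\mathcal{H}^1(0,L)$ is an exact intermediate space of the couple; the cases $m\in\{2,4,5\}$ are identical.

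Finally, for the time‑dependent spaces $\overline{L^2H^{m+1}}$ I would use that $L^2(0,T;\cdot)$ commutes with real interpolation, $\big(L^2(0,T;X_0),L^2(0,T;X_1)\big)_{\theta,2}=L^2\big(0,T;(X_0,X_1)_{\theta,2}\big)$ with equal norms, and apply the previous analysis fibrewise, noting that $\mathcal{E}$ and $R$ act only in the space variable and so extend to the Bochner spaces with unchanged endpoint norms. I expect the main obstacle to be not the analysis but the bookkeeping: keeping the normalization of the real interpolation functor consistent across the $K$‑functional computation on $\R$, the coretraction transfer, and the Bochner‑space step, so that the constants appearing in \eqref{interpo-H} and \eqref{interpo-LH} come out exactly as $\lambda_{m_0}^{-(1-\theta)}\lambda_{m_1}^{-\theta}$ and $1$, with no stray universal factor surviving.
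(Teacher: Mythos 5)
Your proposal is correct and follows essentially the same route as the paper: part (III) by the same computation via \eqref{ineemb}, part (I) via a reflection-type extension (the paper cites Stein's theorem for this, you spell out a Hestenes--Seeley construction, which is the same mechanism on an interval), and part (II) via the retraction/coretraction transfer of the $K$-functional computation on $\R$ together with the commutation of $L^2(0,T;\cdot)$ with real interpolation (the paper outsources this step to \cite{sobolev-quantitative}, Lemma 4.2, and you recover it directly by Plancherel). The only small discrepancy is that your $G^m=\bigl(1+\sum_{0<\alpha<m}\binom{m}{\alpha}E^\alpha_m\bigr)^{1/2}$ is what the displayed squared bound actually yields, whereas the paper records $G^m$ without the square root, a valid but looser constant.
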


\begin{proof}[Proof of Lemma \ref{lem-inter-cite}]
($I$) This is a classical extension problem, we recall Stein \cite[page 182 Theorem $5'$]{Stein} for a precise construction.  In fact the same type of results also exists for many other spaces, like Besov space $etc$.

($II$) Inequality \eqref{interpo-H} is exactly \cite[Lemma 4.2]{sobolev-quantitative}, and the same method also leads to \eqref{interpo-LH}.

 ($III$) The first inequality is obvious. It suffices to prove the second one.   If $m=0$ or $1$, then $G^m=1$. Else, we get from the definition that 
 \begin{align*}
 \|u\|_{\mathcal{H}^m(0, L)}^2&= \sum_{\alpha\leq m}\binom{m}{\alpha}\|\partial^{\alpha}u\|^2_{L^2(0, L)},\\
 &=  \|u\|_{H^m(0, L)}^2+ \sum_{0<\alpha< m}\binom{m}{\alpha}\|\partial^{\alpha}u\|^2_{L^2(0, L)}, \\
 &\leq  \|u\|_{H^m(0, L)}^2+ \sum_{0<\alpha< m}\binom{m}{\alpha} E^{\alpha}_m\|u\|_{H^m(0, L)}^2, \\
 &=  \|u\|_{H^m(0, L)}^2\left(1+\sum_{0<\alpha< m}\binom{m}{\alpha} E^{\alpha}_m\right),
 \end{align*}
 which gives the value of $G^m$:
 \begin{equation}
 G^m:= 1+\sum_{0<\alpha< m}\binom{m}{\alpha} E^{\alpha}_m.  \notag
 \end{equation}
\end{proof}
Armed with the preceding lemma, we can apply the interpolation theory on cases $k=1,2,4$ and $5$.  Here we only prove the case $k=1$,  while the other cases can be treated in the same way.  Since we are dealing with the KdV flow, we add the natural compatibility conditions on interpolation spaces, for example $\overline{H^1_{(0)}}$ which is endowed with the same norm as $\overline{H^1}$.

For any $t\in (0, T]$, we define a mapping operator 
\begin{equation}
\mathcal{L}_0^t: f\longmapsto S(t)f.\notag
\end{equation}
We also define 
\begin{equation}
\mathcal{L}_1: f\longmapsto S(\cdot)f, t\in [0, T].\notag
\end{equation}
From the preceding part we know that, for $m\in\{0, 3\}$ the linear operators
\begin{gather}
\mathcal{L}_0^t: \mathcal{H}^m_{(0)}(0, L)\rightarrow \mathcal{H}^m_{(0)}(0, L),\notag\\
\mathcal{L}_1: \mathcal{H}^m_{(0)}(0, L)\rightarrow L^2(0, T; \mathcal{H}^{m+1}_{(0)}(0, L)),\notag
\end{gather}
 are bounded. Indeed, these bounded are given by
\begin{gather}
\|\mathcal{L}_0^t\|_{\mathcal{H}^m, \mathcal{H}^m}\leq F_0^m G^m,\;
\|\mathcal{L}_1\|_{\mathcal{H}^m,L^2\mathcal{H}^{m+1}}\leq F_1^{m} G^{m+1}.  \notag
\end{gather}
Therefore, thanks to the interpolation theory, we get 
\begin{equation}
\|\mathcal{L}_0^t\|_{\overline{H^1}, \overline{H^1}}\leq \|\mathcal{L}_0^t\|_{\mathcal{H}^0, \mathcal{H}^0}^{\frac{2}{3}} \|\mathcal{L}_0^t\|_{\mathcal{H}^3, \mathcal{H}^3}^{\frac{1}{3}}\leq (F^0_0G^0)^\frac{2}{3}(F^3_0G^3)^\frac{1}{3}, \notag
\end{equation}
\begin{equation}
\|\mathcal{L}_1\|_{\overline{H^1}, \overline{L^2H^2}}\leq \|\mathcal{L}_1\|_{\mathcal{H}^0, L^2\mathcal{H}^1}^{\frac{2}{3}} \|\mathcal{L}_1\|_{\mathcal{H}^3, L^2\mathcal{H}^4}^{\frac{1}{3}}\leq (F^0_1G^1)^\frac{2}{3}(F^3_1G^4)^\frac{1}{3}. \notag
\end{equation}
Thus
\begin{equation}
\|\mathcal{L}_0^t\|_{\mathcal{H}^1, \mathcal{H}^1}\leq (\lambda^0)^{\frac{2}{3}}(\lambda^3)^{\frac{1}{3}}\|\mathcal{L}_0^t\|_{\overline{H^1}, \overline{H^1}}\leq (\lambda^0 F^0_0G^0)^\frac{2}{3}(\lambda^3F^3_0G^3)^\frac{1}{3},\notag
\end{equation}
\begin{equation}
\|\mathcal{L}_1\|_{\mathcal{H}^1, L^2\mathcal{H}^2}\leq (\lambda^1)^{\frac{2}{3}}(\lambda^4)^{\frac{1}{3}}\|\mathcal{L}_0^t\|_{\overline{H^1}, \overline{L^2H^2}}\leq (\lambda^1 F^0_1G^1)^\frac{2}{3}(\lambda^4F^3_1G^4)^\frac{1}{3}, \notag
\end{equation}
hence
\begin{equation}
\|\mathcal{L}_0^t\|_{H^1, H^1}\leq G^1 \|\mathcal{L}_0^t\|_{\mathcal{H}^1, \mathcal{H}^1}\leq G^1(\lambda^0 F^0_0G^0)^\frac{2}{3}(\lambda^3F^3_0G^3)^\frac{1}{3},\notag
\end{equation}
\begin{equation}
\|\mathcal{L}_1\|_{H^1, L^2H^2}\leq G^1\|\mathcal{L}_1\|_{\mathcal{H}^1, L^2\mathcal{H}^2}\leq G^1(\lambda^1 F^0_1G^1)^\frac{2}{3}(\lambda^4F^3_1G^4)^\frac{1}{3}, \notag
\end{equation}
Hence we get 
\begin{align*}
\|S(t)f_0\|_{L^{\infty}([0, T]; H_{(0)}^1(0, L))}&\leq  F_0^1 \|f\|_{H_{(0)}^1(0, L)}, \\
\|S(t)f_0\|_{L^2(0, T; H_{(0)}^{2}(0, L))}&\leq   F_1^1 \|f\|_{H_{(0)}^1(0, L)},
\end{align*}
with $F^1_0, F^1_1$ defined by 
\begin{equation}
F^1_0:= G^1(\lambda^0 F^0_0G^0)^\frac{2}{3}(\lambda^3F^3_0G^3)^\frac{1}{3}, \;F^1_1:= G^1(\lambda^1 F^0_1G^1)^\frac{2}{3}(\lambda^4F^3_1G^4)^\frac{1}{3}.\notag
\end{equation}
As the flow conserves the Sobolev regularity, we know that 
\begin{equation}
\|S(t)f_0\|_{C^0([0, T]; H_{(0)}^1(0, L))}\leq  F_0^1 \|f\|_{H_{(0)}^1(0, L)}.
\end{equation}

Similar calculation provides
\begin{align*}
F^2_0:= G^2(\lambda^0 F^0_0G^0)^\frac{1}{3}(\lambda^3F^3_0G^3)^\frac{2}{3},& \;F^2_1:= G^2(\lambda^1 F^0_1G^1)^\frac{1}{3}(\lambda^4F^3_1G^4)^\frac{2}{3}, \\
F^4_0:= G^4(\lambda^3 F^3_0G^3)^\frac{2}{3}(\lambda^6F^6_0G^6)^\frac{1}{3},& \;F^4_1:= G^4(\lambda^4 F^3_1G^4)^\frac{2}{3}(\lambda^7F^6_1G^7)^\frac{1}{3}, \\
F^5_0:= G^5(\lambda^3 F^3_0G^3)^\frac{1}{3}(\lambda^6F^6_0G^6)^\frac{2}{3},& \;F^5_1:= G^5(\lambda^4 F^3_1G^4)^\frac{1}{3}(\lambda^7F^6_1G^7)^\frac{2}{3}. 
\end{align*}
\end{proof}

\begin{proof}[Proof of Lemma \ref{thm-flow-smooth}]
Since the $L^2$ energy of the flow decays, it suffices to prove \eqref{tleqL}.
For any $t\in (0, T]$, there exists a unique $n\in \mathbb{Z}$ such that $t\in (2^n, 2^{n+1}]$. Then, thanks to Lemma \ref{thm-flow-con}, we can find some $t'\in (2^{n-1}, 2^{n}]$ satisfies 
\begin{equation}
\|S(t')f_0\|_{H^{k+1}_{(0)}}\leq F_1^k 2^{-(n-1)/2}\|f_0\|_{H^k_{(0)}}.
\end{equation} 
Otherwise, we have 
\begin{align*}
 \int_{0}^{T}  \|S(t')f_0)\|_{H^{k+1}_{(0)}}^2  \,dt&\geq \int_{2^{n-1}}^{2^{n}} \|S(t')f_0)\|_{H^{k+1}_0}^2 \,dt, \\
&> \int_{2^{n-1}}^{2^{n}} (F_1^k)^2 2^{-(n-1)} \|f_0\|_{H^k_{(0)}}^2\,dt, \\
&= \left(F_1^k \|f_0\|_{H^k_{(0)}}\right)^2,
\end{align*}
which is in contradiction with \eqref{F1}. Thanks to inequality \eqref{F0}, we get
\begin{align}\label{01smooth}
\|S(t)f_0\|_{H^{k+1}_{(0)}}&= \|S(t-t')\big(S(t')f_0\big)\|_{H^{k+1}_{(0)}},  \notag\\
&\leq F_0^{k+1}\|S(t')f_0\|_{H^{k+1}_{(0)}},\notag \\
&\leq 2^{-(n-1)/2} F_1^k F_0^{k+1}\|f_0\|_{H^k_{(0)}}, \notag\\
&\leq 2 t^{-1/2}F_1^k F_0^{k+1}\|f_0\|_{H^k_{(0)}}, \;\forall \;t\in (0, T],\; T\leq L.
\end{align}
By applying \eqref{01smooth} with $k=0, 1,..., k$ respectively, we are able to get
\begin{align*}
\|S(t)f_0\|_{H^{n}_{(0)}}&=\Big{\|}\left(S\Big(\frac{t}{n}\Big)\right)^n f_0\Big{\|}_{H^{n}_{(0)}}, \\
&\leq 2 \left(\frac{t}{n}\right)^{-1/2}F^{n-1}_1F^n_0\Big{\|}\left(S\Big(\frac{t}{n}\Big)\right)^{n-1} f_0\Big{\|}_{H^{n-1}_{(0)}}, \\
&\leq 4 \left(\frac{t}{n}\right)^{-1}F^{n-1}_1F^n_0F^{n-2}_1F^{n-1}_0\Big{\|}\left(S\Big(\frac{t}{n}\Big)\right)^{n-2} f_0\Big{\|}_{H^{n-2}_{(0)}},\\
&\leq \frac{2^n n^{n/2}}{t^{n/2}}\left(\prod_{i=0}^{n-1}F^{i}_1F^{i+1}_0\right) \|f_0\|_{L^2},  \;\forall \;t\in (0, T],\; T\leq L.
\end{align*} 
Hence, we conclude the proof of Lemma \ref{thm-flow-smooth} by selecting 
\begin{equation}
F^k_s:=  2^k k^{k/2}\left(\prod_{i=0}^{k-1}F^{i}_1F^{i+1}_0\right), \;k\in \{1, 2, 3, 4, 5, 6\}.
\end{equation}
\end{proof}

\bibliographystyle{plain} 
\bibliography{KdVEffective}

\end{document}